\begin{document}

\title[On the Laplace transform]{On the Laplace transform for tempered holomorphic functions}

\author[A. D'Agnolo]{Andrea D'Agnolo}

\address{Dipartimento di Matematica\\ 
Universit\`a di Padova\\ 
via Trieste 63\\
35121 Padova, Italy}

\email{dagnolo@math.unipd.it}

\thanks{The author wishes to thank R.I.M.S.~of Kyoto University for
kind hospitality during the preparation of this paper and acknowledges
partial support from the Fondazione Cariparo through the
project ``Differential methods in Arithmetic, Geometry and Algebra''.}

\subjclass[2010]{32C38, 44A10, 14F10}

\keywords{Fourier-Sato transform, Laplace transform, tempered
  holomorphic functions, Paley-Wiener theorem}

\maketitle

\begin{abstract}
  In order to discuss the Fourier-Sato transform of not necessarily
  conic sheaves, we compensate the lack of homogeneity by adding an
  extra variable. We can then obtain Paley-Wiener type results, using
  a theorem by Kashiwara and Schapira on the Laplace transform for
  tempered holomorphic functions. As a key tool in our approach, we
  introduce the subanalytic sheaf of holomorphic functions with exponential
  growth, which should be of independent interest.
\end{abstract}

\setcounter{tocdepth}{1}
\tableofcontents

\section{Introduction}

Let $\VV$ and $\WW$ be dual $n$-dimensional complex vector spaces.
Kashiwara-Schapira~\cite{KS97} proved that the Laplace transform
\[
\varphi(x) \mapsto \int e^{-\langle x,y \rangle} \varphi(x) dx
\]
induces an isomorphism
\begin{equation}
\label{eq:KS}
\RHom(F,\sho^t_{\VV|\PC(\VV)}) \simeq
\RHom(F^\wedge[n],\sho^t_{\WW|\PC(\WW)}).
\end{equation}
Here, $F$ is a conic $\R$-constructible complex of sheaves on $\VV$,
$F^\wedge$ its Fourier-Sato transform, and $\sho^t_{\VV|\PC(\VV)}$ is
the subanalytic sheaf of holomorphic functions tempered up to the
projective compactification $\PC(\VV)$ of $\VV$.  Let us recall a
couple of statements deduced from \eqref{eq:KS} for particular choices
of $F$.

\smallskip\noindent(i)
Assume that $\VV$ and $\WW$ are complexifications of the real vector spaces $\V$ and $\W$.
For $F=\field_\V$ the constant sheaf on $\V$, one recovers the classical isomorphism
\[
\sect(\V;\Db_{\V|\PR(\V)}^t) \simeq \sect(\W;\Db_{\W|\PR(\W)}^t)
\]
between the spaces of tempered distributions.

\smallskip\noindent(ii)
Let $(x',x'')$ be the coordinates on $\VV =
\C^p \times \C^q$ and $(y',y'')$ the dual coordinates on $\WW$.
Let $A = {\{ (\Re x')^2-(\Re x'')^2 \leq 0 \}}$ be a quadratic cone in $\V$.
For $F=\field_A$, one recovers a result of Faraut-Gindikin:
\[
\sect_A(\V;\Db_{\V|\PR(\V)}^t) \simeq H^q\rsect_{\{(\Re y')^2-(\Re y'')^2 \geq
  0\}}(\WW,\sho_{\WW|\PC(\WW)}^t).
\]

\medskip

Our aim in this paper is to extend the isomorphism \eqref{eq:KS} in
order to treat the case where $F$ is not necessarily conic. This will
allow us to obtain Paley-Wiener type results like the following:

\smallskip\noindent(iii)
Let $A\subset\V$ be a closed, convex, subanalytic, bounded subset.
Denote by $\suppfct_A$ its support function. 
For $F=\field_A$ we will recover the classical Paley-Wiener theorem of
\cite[Theorem~7.3.1]{Hor83}:
\begin{multline*}
\sect_A(\V;\Db_\V) \isoto
\{\psi\in \sect(\WW;\sho_{\WW}) \colon \\
\exists c,\ \exists m,\ \forall y,\ |\psi(y)| \leq c(1+|y|)^m
e^{\suppfct_A(-\Re\,y)} \}.
\end{multline*}
We will also discuss the case where $A$ is not necessarily bounded
nor included in the real part $\V$ of $\VV$.

\smallskip\noindent(iv) Generalizing (ii) above, for $c\geq0$
let $A = {\{ (\Re x')^2-(\Re x'')^2 \leq c^2 \}}$ be a quadric in $\V$.
For $F=\field_A$ we will get a description of
the Laplace transform of the space
$\sect_A(\V;\Db_{\V|\PR(\V)}^t)$.

\medskip

In order to state our result, let us start by describing the functional spaces that
will appear in the statement.

Let $j\colon X \to X'$ be an open subanalytic embedding of real
analytic manifolds.  A $j$-$\R$-constructible sheaf on $X$ is a sheaf
(or more precisely, an object of the derived category of sheaves)
whose proper direct image by $j$ is $\R$-constructible in $X'$.  Such sheaves
are naturally identified with sheaves on the site $X_{j\text-\sa}$,
whose objects are open subsets of $X$ which are subanalytic in $X'$
and whose coverings are locally finite in $X'$.

For $f\colon X \to \R$ a continuous subanalytic function, consider the
sheaf $\cit[f]_{X|X'}$ on $X_{j\text-\sa}$ whose sections on $U\subset
X$ are $f$-tempered functions.  These are smooth functions $\varphi$
which, together with all of their derivatives, locally satisfy on $X'$
an estimate of the type
\[
|\varphi(x)| \leq 
c  \left( 1+ \frac1{\dist(X'\setminus U,x)} + |f(x)| \right)^m e^{f(x)}.
\]
The subanalytic sheaf of tempered functions considered in~\cite{KS01}
is recovered as $\cit_X = \cit[0]_{X|X}$. The sheaf $\cit_{X|X'} =
\cit[0]_{X|X'}$ takes also into account growth conditions at infinity.
We show that $\varphi$ is $f$-tempered on $U$ if and only if
$\varphi(x)e^s$ is tempered on $\{(x,s)\colon x\in U,\ s < -f(x)\}$.

Let now $j\colon X \to X'$ be an open subanalytic embedding of complex
analytic manifolds. We denote by $\sho^{\temp f}_{X|X'}$ the Dolbeault
complex with coefficients in $\cit[f]_{X|X'}$. These sheaves should be
of independent interest in dealing with holonomic
$\D$-modules which are  not necessarily regular.

The functional spaces we
will be dealing with are those of the form
\[
\RHom(F,\sho^{\temp f}_{X|X'}),
\] 
for $F$ a $j$-$\R$-constructible sheaf.  For example, the subanalytic
sheaf of holomorphic functions tempered up to infinity appearing in
\eqref{eq:KS} is recovered as $\sho^t_{X|X'} = \sho^{\temp 0}_{X|X'}$.

\medskip

We can now state our result on the Laplace transform.
Recall that $\VV$ and $\WW$ are dual complex vector spaces of dimension
$n$.
In order to treat the case of not necessarily conic sheaves, we
compensate the lack of homogeneity by adding an extra
variable. Consider the embedding
\[
i\colon \VV\to\VV\times\C,\quad x\mapsto(x,-1).
\]
Let $F$ be a $j$-$\R$-constructible sheaf on $\VV$, where
$j\colon\VV\to\PC(\VV)$ is the complex projective compactification.
Note that if $F$ is conic, then
\[
(\reim i F)^\wedge \simeq (F^\wedge\etens \field_\C)
\tens \field_{\{\Re t \geq 0\}},
\]
with $t\in\C$ the dual of the extra variable.
For $F$ not necessarily conic, assume that 
\[
(\reim i F)^\wedge \simeq (G\etens \field_\C)
\tens \field_{\{\Re t \geq -g(y)\}},
\]
for $G$ a conic sheaf on $\WW$ and $g\colon \WW\to \R$ a continuous
subanalytic function which is positive homogeneous of degree one. (We claim in Conjecture~\ref{con:Tam} that this assumption is not very strong.) Then we have
an isomorphism
\[
\RHom(F,\sho^t_{\VV|\PC(\VV)}) \simeq
\RHom(G[n], \sho^{\temp g}_{\WW|\PC(\WW)}).
\]
In order to get this result we will start by discussing some
generalities on conic sheaves. For example, we explicitly describe
the left and right adjoint to the embedding of conic sheaves into
sheaves. We also prove that the functor $(\reim i
(\cdot))^\wedge$ is fully faithful and that its essential image consists of
the conic sheaves $H$ on $\WW\times\C$ such that
\[
H \conv
\field_{\{\Re t\geq 0,y=0\}} \isoto H.
\] 
This is the kind of condition considered by Tamarkin~\cite{Tam10}.  We
will see how the Fourier transform considered in~\cite{Tam10} is
related to the functor $F\mapsto (\reim i F)^\wedge$.

\medskip

The plan of the paper is as follows.

Section~\ref{se:ker} recalls the formalism of kernel calculus for
sheaves, which will be useful on several occasions in the rest of the
paper.

On a space $X$ endowed with an $\R^+$-action, Section~\ref{se:con} gives an
elementary construction of the left and right adjoint to the embedding of
conic sheaves into sheaves. These are called conification functors.

Let $i\colon Y\to X$ be the embedding of a locally closed subset satisfying a
suitable assumption with respect to the $\R^+$-action.
Section~\ref{se:conified} characterizes the image of the fully faithful
functor sending a not necessarily conic sheaf on $Y$ to the
conification of its direct image by $i$.

Section~\ref{se:FS} recalls some properties of the Fourier-Sato
transform between conic sheaves on dual real vector spaces $\V$ and
$\W$.

Section~\ref{se:iF} characterizes the image of the fully faithful
functor from $\V$ to $\W\times\R$ sending a not necessarily conic
sheaf on $\V$ to the Fourier-Sato transform of its direct image by the
embedding $i\colon\V\to\V\times \R$, $x\mapsto (x,-1)$.

Let $j\colon X\to X'$ be an open subanalytic embedding of real
analytic manifolds. The $j$-subanalytic site on $X$ is the one induced
by the subanalytic site on $X'$. In Section~\ref{se:expR} we introduce
the $j$-subanalytic sheaf of smooth functions with exponential
growth. We also relate this sheaf to the sheaf of tempered smooth
functions with an extra variable.

If $X$ is a complex analytic manifold, we discuss in
Section~\ref{se:expC} the $j$-subanalytic sheaf of holomorphic
functions with exponential growth. This is the Dolbeault complex of
the previous sheaf.

In Section~\ref{se:L} we recall a theorem by Kashiwara and Schapira on the
Fourier-Laplace transform between tempered holomorphic functions associated
with conic sheaves on dual complex vector spaces $\VV$ and $\WW$. 

We then extend it in Section~\ref{se:coL} to sheaves which are not
necessarily conic by considering as above the embedding
$i\colon\VV\to\VV\times \C$, $x\mapsto (x,-1)$.

As an application of the above results, we get in Section~\ref{se:PW}
some Paley-Wiener type theorems.

In Appendix~\ref{se:Tam} we show how the functor $F\mapsto (\reim i
F)^\wedge$ is expressed in terms of the Fourier transform considered
by Tamarkin in~\cite{Tam10}.

\subsection*{Acknowledgments}
The author is grateful to Pierre Schapira for
bringing to his attention the problem of extending the results
of~\cite{KS97} to the not necessarily conic case. He also acknowledges
very useful discussions with St\'ephane Guillermou, Luca Prelli, Pierre
Schapira, and especially with Masaki Kashiwara.

\section{Kernel calculus}\label{se:ker}

We recall here the definition and basic properties of kernel
calculus for sheaves. This formalism will be useful in the first part of the
paper.

\medskip

Let $X$ be a locally compact topological space and $\field$ a field.
For $A\subset X$ a locally closed subset, we denote by $\field_A$ the
constant sheaf on $A$ with stalk $\field$, extended by zero to $X$.
Denote by $\BDC(\field_X)$ the bounded derived category of sheaves of
$\field$-vector spaces on $X$ and by $\tens$, $\rhom$, $\opb f$,
$\roim f$, $\reim f$, $\epb f$ the usual operations (here $f\colon
X\to Y$ is a continuous map of locally compact spaces). More generally,
in this paper we will follow the notations of \cite{KS90}.

Let $X_i$ ($i\in\N$) be locally compact topological spaces. Consider
the projections $q_{ij}\colon X_1\times X_2 \times X_3 \to X_i\times X_j$.
For $K_{ij}\in\BDC(\field_{X_i\times X_j})$, set
\begin{align}
  \label{eq:comp1}
  K_{12}\comp[X_2] K_{23} &= \reim{q_{13}}(\opb{q_{12}}K_{12}\tens\opb{q_{23}}K_{23}),\\
  \label{eq:comp2}
  [K_{12}, K_{23}]\low{X_2} &=
  \roim{q_{13}}\rhom(\opb{q_{12}}K_{12},\epb{q_{23}}K_{23}).
\end{align}
By adjunction and projection formula, one gets
\begin{equation}
  \label{eq:compadj}
[K_{12}\comp[X_2] K_{23},K_{34}]\low{X_3} \simeq
[K_{12},[K_{23},K_{34}]\low{X_3}]\low{X_2}.
\end{equation}

The operations \eqref{eq:comp1} and \eqref{eq:comp2} are called
compositions of kernels. This is because, for
$K\in\BDC(\field_{X\times Y})$, the functors
\begin{align}
  \label{eq:compK1}
  &\BDC(\field_X) \to \BDC(\field_Y), \quad F\mapsto F\comp[X] K,\\
  \label{eq:compK2}
  &\BDC(\field_Y) \to \BDC(\field_X), \quad G\mapsto [K,G]\low Y,
\end{align}
can be considered as sheaf theoretical analogues of an integral transform with kernel
$K$. Note that \eqref{eq:compadj} implies that \eqref{eq:compK1} and \eqref{eq:compK2} are
adjoint functors.

Denote by $\Gamma_f\subset X\times Y$ the graph of $f\colon
X\to Y$ and by $\etens$ the exterior tensor product. One has
\[
  F\etens G \simeq F\comp[\{pt\}]G, \quad \reim f F \simeq F\comp[X]
  \field_{\Gamma_f}, \quad \opb f G \simeq \field_{\Gamma_f} \comp[Y],
  G.
\]
Similar relations
hold for the adjoint operations.

Denote by $\Delta_{X_i}$ the diagonal of $X_i\times X_i$ and set $K_{ij}^r
= \reim r K_{ij}$, for $r(x_i,x_j) = (x_j,x_i)$. One has
\begin{gather}
  \field_{\Delta_{X_1}} \comp[X_1] K_{12} \simeq K_{12} \simeq
  [\field_{\Delta_{X_1}}, K_{12}]\low{X_1}, \\
  (K_{12}\comp[X_2] K_{23})\comp[X_3] K_{34} \simeq K_{12}\comp[X_2]
  (K_{23}\comp[X_3] K_{34}),\\
  \label{eq:compr}
  (K_{12}\comp[X_2] K_{23})^r \simeq K_{23}^r\comp[X_2] K_{12}^r.
\end{gather}

Consider the projections
$q_{ij}\colon X_1\times \cdots\times X_{m+1} \to X_i\times X_j$. One has
\begin{equation}
  \label{eq:comp}
  K_{12}\comp[X_2] \cdots \comp[X_m] K_{mm+1} \simeq
  \reim{q_{1m+1}}(\opb{q_{12}}K_{12}\tens\cdots\tens\opb{q_{mm+1}}K_{mm+1}).
\end{equation}

\section{Conic sheaves}\label{se:con}

Let $X$ be a locally compact space endowed with an action of the
multiplicative group $\R^+$ of positive real numbers and
consider the maps
\[
\xymatrix{ \R^+\times X \ar@<.5ex>[r]^-p \ar@<-.5ex>[r]_-\mu & X, }
\]
where $p$ is the projection and $\mu$ is the action.  
We will write for short $\mu(t,x) = tx$.
Recall (see~\cite[\S3.7]{KS90}) that a sheaf $F$ on $X$ is called $\R^+$-conic if it
satisfies
\[
\opb\mu F \simeq \opb p F.
\]
Note that $\epb p \simeq \opb p [1]$ and $\epb \mu \simeq \opb \mu [1]$.
Denote by
$\BDC_{\R^+}(\field_X)$ the full triangulated subcategory of
$\BDC(\field_X)$ whose objects are $\R^+$-conic.

\begin{definition}
  The left and right conification functors are the pair of
  adjoint functors
  \begin{align*}
    (\cdot)^\cone &\colon \BDC(\field_X) \to \BDC(\field_X), &
    F^\cone &= \reim\mu \opb p F [1] \simeq \reim\mu \epb p F,\\
    {}^\cone(\cdot) &\colon \BDC(\field_X) \to \BDC(\field_X), &
    {}^\cone F
    &=  \roim p \epb\mu F [-1] \simeq \roim p \opb\mu F.
  \end{align*}
\end{definition}

Note that $\mu$ and $p$ can be interchanged in the above
definition. In fact, one has for example $\reim \mu \opb p \simeq
\reim\mu\reim e \opb e \opb p \simeq \reim p \opb \mu$, where $e(t,x)
= (t^{-1},tx)$. 

\medskip

In this section we will show that the left and
right conification functors are respectively the left and
right adjoint to the embedding $\BDC_{\R^+}(\field_X)\to\BDC(\field_X)$.

\begin{remark}
Let $j\colon X\to\R^+\times X$ be the embedding $x\mapsto(1,x)$.
Assume that there is an isomorphism $\opb\mu F \simeq \opb p F$.
Since $\opb p$ is fully faithful, one checks that there is a unique
isomorphism $\beta\colon\opb\mu F \isoto \opb p F$ such that $\opb j\beta=\id_F$.
Thus, the category $\BDC_{\R^+}(\field_X)$ is equivalent to the
equivariant derived category in the sense of \cite{BL94}. There, for
groups $G$ more general than $\R^+$, it is shown that the
forgetful functor $\BDC_G(\field_X)\to\BDC(\field_X)$ has a left and a
right adjoint. Here, for $G=\R^+$, we describe these adjoints without
the machinery of equivariant derived categories.
\end{remark}

Consider the projection $q_{23}\colon \R^+\times X\times X \to X\times
X$. Denoting by $(t,x,x')$ a point in $\R^+\times X\times X$, we will
sometimes write $\{x'=tx\}$ instead of $\Gamma_\mu$.

\begin{definition}
The conification kernel is the object of $\BDC(\field_{X\times X})$
given by
\[
  C_X = \reim{q_{23}}\field_{\{x'=tx\}}[1].
\]
\end{definition}

\begin{lemma}
\label{lem:CX}
There are isomorphisms:
\begin{align}
\label{eq:CXi}
&C_X \simeq (C_X)^r,\\
\label{eq:CXii}
&C_X \simeq \field_{r(\Gamma_\mu)}
  \comp[\R^+\times X] \field_{\Gamma_p}[1], & &C_X \simeq \field_{r(\Gamma_p)}
  \comp[\R^+\times X] \field_{\Gamma_\mu}[1], \\
\label{eq:CXiii}
  &C_X \comp[X]\field_{r(\Gamma_\mu)} \simeq C_X
  \comp[X]\field_{r(\Gamma_p)}, & &\field_{\Gamma_\mu}\comp[X] C_X
  \simeq \field_{\Gamma_p}\comp[X] C_X.
\end{align}
\end{lemma}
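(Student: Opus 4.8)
The plan is to establish the three displays of Lemma~\ref{lem:CX} in the order \eqref{eq:CXii}, \eqref{eq:CXi}, \eqref{eq:CXiii}, using throughout the explicit formula \eqref{eq:comp} for iterated compositions together with the two elementary identities
\[
\field_{\Gamma_g}\comp[X] K \simeq \opb{(g\times\id)}K,\qquad
K\comp[X]\field_{r(\Gamma_g)} \simeq \opb{(\id\times g)}K ,
\]
valid for any continuous $g\colon W\to X$ and any kernel $K$ (on $X\times Z$, resp.\ on $Z\times X$); both follow at once from \eqref{eq:comp}, since the relevant intersection of pullbacks is the graph of $g\times\id$, resp.\ of $\id\times g$, mapped isomorphically onto $W\times Z$ by the projection $q_{13}$.

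First I would prove the second isomorphism in \eqref{eq:CXii}. Applying \eqref{eq:comp} with $X_1=X_3=X$, $X_2=\R^+\times X$, and writing a point of $X\times(\R^+\times X)\times X$ as $(x,(t,y),x')$, the sheaf $\opb{q_{12}}\field_{r(\Gamma_p)}\tens\opb{q_{23}}\field_{\Gamma_\mu}$ is the constant sheaf on the set $\{x=y,\ x'=ty\}$, which is homeomorphic to $\R^+\times X$ via $(t,y)$; through this parametrization $q_{13}$ is identified with the restriction to $\Gamma_\mu=\{x'=tx\}$ of the map $q_{23}\colon\R^+\times X\times X\to X\times X$ used in the definition of $C_X$. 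Hence $\field_{r(\Gamma_p)}\comp[\R^+\times X]\field_{\Gamma_\mu}[1]\simeq\reim{q_{23}}\field_{\Gamma_\mu}[1]=C_X$. For \eqref{eq:CXi} I would then use the homeomorphism $\Psi$ of $\R^+\times X\times X$ defined by $(t,x,x')\mapsto(t^{-1},x',x)$: it is an involution satisfying $\Psi(\Gamma_\mu)=\Gamma_\mu$ and $r\circ q_{23}=q_{23}\circ\Psi$ (with $r$ the flip of $X\times X$), so $(C_X)^r=\reim r\reim{q_{23}}\field_{\Gamma_\mu}[1]=\reim{q_{23}}\reim\Psi\field_{\Gamma_\mu}[1]=C_X$. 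The first isomorphism in \eqref{eq:CXii} then follows by applying $(\cdot)^r$ to the second one and using \eqref{eq:compr} together with $\field_{\Gamma_\mu}^r=\field_{r(\Gamma_\mu)}$, $\field_{r(\Gamma_p)}^r=\field_{\Gamma_p}$ and \eqref{eq:CXi}.

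For \eqref{eq:CXiii}, the two identities above give $C_X\comp[X]\field_{r(\Gamma_\mu)}\simeq\opb{(\id_X\times\mu)}C_X$, $C_X\comp[X]\field_{r(\Gamma_p)}\simeq\opb{(\id_X\times p)}C_X$, and likewise $\field_{\Gamma_\mu}\comp[X] C_X\simeq\opb{(\mu\times\id_X)}C_X$, $\field_{\Gamma_p}\comp[X] C_X\simeq\opb{(p\times\id_X)}C_X$. After reordering the factors, \eqref{eq:CXiii} thus amounts to the assertion that $C_X$ is $\R^+$-conic on $X\times X$ separately for the action on the second factor and for the action on the first factor; by \eqref{eq:CXi} the latter reduces to the former. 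To see that $C_X[-1]=\reim{q_{23}}\field_{\Gamma_\mu}$ is conic for the second-factor action, note that $\Gamma_\mu$ is invariant under the $\R^+$-action $s\cdot(t,x,x')=(st,x,sx')$ on $\R^+\times X\times X$ (so that $\field_{\Gamma_\mu}$ is conic for it) and that $q_{23}$ intertwines this action with the second-factor action on $X\times X$; it then suffices to know that $\reim g$ carries conic sheaves to conic sheaves for $\R^+$-equivariant $g$, which follows from two applications of base change along the Cartesian squares built from the homeomorphism $(s,m)\mapsto(s,sm)$ intertwining $p$ and $\mu$ (alternatively, one may cite \cite[\S3.7]{KS90}).

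I expect the computations for \eqref{eq:CXii} and \eqref{eq:CXi} to be routine changes of variables in a fibre product. The main obstacle is the conicity assertion entering the proof of \eqref{eq:CXiii}: when the $\R^+$-action on $X$ has fixed points, $q_{23}$ is not submersive and $\Gamma_\mu$ is not a submanifold, so the identification of $\reim{q_{23}}\field_{\Gamma_\mu}$ as a conic sheaf cannot be read off stalks and has to go through the base-change argument indicated above.
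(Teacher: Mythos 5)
Your proof is correct. For \eqref{eq:CXi} and \eqref{eq:CXii} you follow essentially the same route as the paper: the involution $(t,x,x')\mapsto(t^{-1},x',x)$ for \eqref{eq:CXi}, and a direct fibre-product computation for one of the two isomorphisms in \eqref{eq:CXii} (you compute the second directly and deduce the first by applying $(\cdot)^r$; the paper does the reverse). The genuine difference is in \eqref{eq:CXiii}. The paper writes both sides as $\reim q\field_M$ and $\reim q\field_P$ for explicit subsets $M,P$ of $\R^+\times X\times X\times X\times\R^+$ and exhibits an automorphism $e$ with $qe=q$ and $e(P)=M$. You instead use the identities $K\comp[X]\field_{r(\Gamma_\mu)}\simeq\opb{(\id\times\mu)}K$ and $K\comp[X]\field_{r(\Gamma_p)}\simeq\opb{(\id\times p)}K$ to recast \eqref{eq:CXiii} as the statement that $C_X$ is $\R^+$-conic on $X\times X$ for the action on each factor separately, and you prove this from the invariance of $\{x'=tx\}$ under $u\cdot(t,x,x')=(ut,x,ux')$ together with the fact that $\reim{q_{23}}$ preserves conic objects along equivariant maps. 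The two arguments rest on the same homogeneity --- the paper's automorphism $e(t,x,x',x'',t')=(tt',x,t'x',x'',t')$ is precisely the flow of the action you invoke --- but your packaging is more conceptual and makes it transparent why Proposition~\ref{pro:con}(ii) follows, at the cost of importing the equivariance lemma from \cite[\S3.7]{KS90} or reproving it by base change as you indicate; the paper's computation is self-contained and uses nothing beyond \eqref{eq:comp}. Your closing caveat about fixed points is well taken and correctly resolved: the relevant squares are Cartesian because the action is by homeomorphisms, and no submersivity is needed.
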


\begin{proof}
Set $e(t,x,x') = (t^{-1},x',x)$. Then $q_{23}e = r q_{23}$ and
  $e(\{x'=tx\}) = \{x'=tx\}$.  Hence
  \[
  \reim{q_{23}}\field_{\{x'=tx\}} \simeq \reim{q_{23}} \reim e
  \field_{\{x'=tx\}} \simeq \reim r \reim{q_{23}}\field_{\{x'=tx\}}.
  \]
This proves \eqref{eq:CXi}.

Consider the projection
  $q\colon X\times (\R^+ \times X) \times X \to \R^+\times X\times X$,
  $(x,t,\widetilde x,x')\mapsto(t,x,x')$ and set $q'_{14} = q_{23}q$. One has
  \begin{multline*}
  \field_{r(\Gamma_\mu)} \comp[\R^+\times
  X] \field_{\Gamma_p} = \reim{q'_{14}}\field_{\{x=t\widetilde x,\
    x'=\widetilde x\}} \\
\simeq
  \reim{q_{23}}\reim q\field_{\{x=t\widetilde x,\ x'=\widetilde x\}} \simeq
  \reim{q_{23}}\field_{\{x'=tx\}}.
  \end{multline*}
This proves the first isomorphism in \eqref{eq:CXii}. The second one follows using
\eqref{eq:compr} and \eqref{eq:CXi}.

Consider the projection
  $q\colon \R^+ \times X\times X \times X\times\R^+ \to X\times
  X\times\R^+$, $(t,x,x',x'',t')\mapsto (x,x'',t')$.  Consider the
  subsets of the source space
\begin{align*}
M & =\{(t,x,x',x'',t')\colon x'=tx,\,x'=t'x''\}, \\
P &=\{(t,x,x',x'',t')\colon x'=tx,\,x'=x''\}.  
\end{align*}
By \eqref{eq:comp} one has
  \[
  C_X\comp[X]\field_{r(\Gamma_\mu)} \simeq \reim q\field_{M}[1], \qquad
  C_X\comp[X]\field_{r(\Gamma_p)} \simeq \reim q\field_{P}[1].
  \]
  Let $e(t,x,x',x'',t')=(tt',x,t'x',x'',t')$.  Since $q = q\,e$ and
  $e(P) = M$, one has
  \[
  \reim q \field_{P} \simeq \reim q \reim e \field_{P} \simeq \reim q
  \field_{e(P)} \simeq \reim q \field_{M}.
  \]
This proves the first isomorphism in \eqref{eq:CXiii}. The second one follows using
\eqref{eq:compr} and \eqref{eq:CXi}.
\end{proof}

Note that for $F\in\BDC(\field_X)$ there is a natural morphism 
\begin{equation}
\label{eq:alpha}
\alpha\colon F \to F^\cone,
\end{equation}
defined as follows.
Consider the embedding $j\colon
  X\to\R^+\times X$, $x\mapsto (1,x)$.
  Then $\alpha$ is given by the composite
  \[
  F \simeq \reim\mu\reim j \epb j \epb p F \to \reim\mu\epb p F \simeq
  F^\cone.
  \]
  In terms of kernels, this is induced by the morphism
  \[
\field_{\{t=1,\,x'=tx\}} \isoto
  \rsect_{\{t=1\}}\field_{\{x'=tx\}}[1] \to \field_{\{x'=tx\}}[1],
  \]
noticing that
$\reim{q_{23}}\field_{\{t=1,\ x'=tx\}} \simeq \field_{\Delta_X}$.
  
\begin{proposition}
  \label{pro:con}
  Let $F\in\BDC(\field_X)$.
  \begin{itemize}
  \item[(i)] There are isomorphisms $F^\cone\simeq F\comp[X]C_X \simeq C_X \comp[X] F$.
  \item[(ii)] $F^\cone$ is $\R^+$-conic.
  \item[(iii)] $F$ is $\R^+$-conic if and only if the morphism
    $\alpha$ in \eqref{eq:alpha} is an isomorphism.
  \item[(iv)] The functor $(\cdot)^\cone$ is left adjoint to the
    fully faithful embedding $\BDC_{\R^+}(\field_X) \to
    \BDC(\field_X)$.  In particular, $F^{\cone\cone} \simeq F^\cone$.
  \item [(v)] Similar results hold for the right conification
    ${}^\cone F  \simeq [C_X,F]\low X$.
  \end{itemize}
\end{proposition}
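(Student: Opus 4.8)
\end{proposition}

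The plan is to reduce every assertion to the kernel calculus of Section~\ref{se:ker} together with the identities for $C_X$ collected in Lemma~\ref{lem:CX}, treating the adjunction purely formally at the end. For (i) I would unwind the definition $F^\cone = \reim\mu\opb p F[1]$ through the dictionary between the six operations and kernel composition of Section~\ref{se:ker}: one has $\opb p F \simeq \field_{\Gamma_p}\comp[X] F$ and $\reim\mu(\cdot) \simeq (\cdot)\comp[\R^+\times X]\field_{\Gamma_\mu}$. Using associativity of $\comp$, the factorization $C_X \simeq \field_{r(\Gamma_p)}\comp[\R^+\times X]\field_{\Gamma_\mu}[1]$ from \eqref{eq:CXii}, and \eqref{eq:compr} to transport the operations to the appropriate factor, this yields $F^\cone \simeq F\comp[X]C_X$. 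The isomorphism $F^\cone \simeq C_X\comp[X]F$ then follows symmetrically, either by unwinding instead $\reim p\opb\mu F[1]$ and invoking the interchange of $\mu$ and $p$ noted after the definition of the conification functors, or from $F\comp[X]C_X$ using $C_X\simeq (C_X)^r$ from \eqref{eq:CXi}.

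Part (ii) is then immediate: writing $F^\cone\simeq C_X\comp[X]F$ and using $\opb\mu(\cdot)\simeq\field_{\Gamma_\mu}\comp[X](\cdot)$, $\opb p(\cdot)\simeq\field_{\Gamma_p}\comp[X](\cdot)$ and associativity, one obtains $\opb\mu F^\cone \simeq (\field_{\Gamma_\mu}\comp[X]C_X)\comp[X]F$ and $\opb p F^\cone \simeq (\field_{\Gamma_p}\comp[X]C_X)\comp[X]F$, which coincide by the second isomorphism in \eqref{eq:CXiii}; hence $F^\cone$ is $\R^+$-conic. The ``if'' direction of (iii) is an immediate consequence: if $\alpha$ is an isomorphism then $F\simeq F^\cone$ is conic.

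The ``only if'' direction of (iii) is the crux, and I expect it to be the one step requiring genuine care. Assume $F$ is $\R^+$-conic. Since an arrow of $\BDC(\field_X)$ is an isomorphism as soon as it is so on every stalk, fix $x\in X$. The fibre $\mu^{-1}(x)$ is the orbit $\{(t,t^{-1}x)\colon t\in\R^+\}$, which is homeomorphic to $\R$, and on it the restriction of $\opb p F$ is the pull-back of $F$ by the orbit map $t\mapsto t^{-1}x$; conicity ($\opb\mu F\simeq\opb p F$) forces this pull-back to be the constant sheaf with stalk $F_x$. By proper base change $(F^\cone)_x$ is therefore the compactly supported cohomology of $\R\cong\mu^{-1}(x)$ with constant coefficients $F_x$, shifted by $[1]$, so $(F^\cone)_x\simeq F_x[-1][1]\simeq F_x$. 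It remains to check that under this identification the stalk of $\alpha$ is the identity of $F_x$, and this is where one uses the explicit kernel description of $\alpha$ recalled after \eqref{eq:alpha} together with the normalization $\opb j\beta=\id_F$ of the conicity isomorphism $\beta\colon\opb\mu F\isoto\opb p F$ discussed above, with $j(x)=(1,x)$, so that the distinguished point of the orbit is $1$. A variant avoiding stalks: for conic $F$ one has $F^\cone=\reim\mu\epb p F\simeq\reim\mu\epb\mu F$ via $\reim\mu(\beta[1])$; the counit $\reim\mu\epb\mu F\to F$ is an isomorphism, because factoring $\mu$ as the projection $\R^+\times X\to X$ precomposed with the homeomorphism $(t,x)\mapsto(t,tx)$ identifies it with the counit of that projection, which is an isomorphism as its fibre is $\R^+\cong\R$; a diagram chase with the triangle identities for $\mu$, $p$ and $j$ then identifies the resulting isomorphism with $\alpha$.

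Granting (i)--(iii), part (iv) is formal. The embedding $\BDC_{\R^+}(\field_X)\to\BDC(\field_X)$ is fully faithful because it is the inclusion of a full subcategory; $F^\cone$ is conic by (ii); and $\alpha_G\colon G\isoto G^\cone$ for every conic $G$ by (iii). One then checks that $\psi\mapsto\psi\circ\alpha_F$ and $f\mapsto\alpha_G^{-1}\circ f^\cone$ are mutually inverse bijections between the arrows $F^\cone\to G$ in $\BDC_{\R^+}(\field_X)$ and the arrows $F\to G$ in $\BDC(\field_X)$, for $G$ conic, the two composites being identities by the naturality square $f^\cone\circ\alpha_F=\alpha_G\circ f$. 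This exhibits $(\cdot)^\cone$ as left adjoint to the embedding with unit $\alpha$, and applying (iii) to the conic object $F^\cone$ gives $F^{\cone\cone}\simeq F^\cone$. Finally, in (v) every statement has its evident analogue, obtained by replacing $(\reim{(\cdot)},\opb{(\cdot)},\tens)$ throughout by $(\roim{(\cdot)},\epb{(\cdot)},\rhom)$ and $\alpha$ by the natural morphism ${}^\cone F\to F$; in particular, dualizing the proof of (i) gives ${}^\cone F\simeq[C_X,F]\low X$, and since $[C_X,\cdot]\low X$ is right adjoint to $(\cdot)\comp[X]C_X=(\cdot)^\cone$ by the adjunction between \eqref{eq:compK1} and \eqref{eq:compK2} while $H^\cone\simeq H$ for conic $H$ by (iii), it follows that ${}^\cone(\cdot)$ is right adjoint to the embedding.
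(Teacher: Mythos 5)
Your overall strategy coincides with the paper's: everything is reduced to the kernel calculus of Section~\ref{se:ker} and to Lemma~\ref{lem:CX}, and parts (i) and (ii) are argued essentially as in the text. For the ``only if'' direction of (iii) you take a genuinely different route: a stalkwise computation via base change along $\mu$, using that conicity trivializes $\opb p F$ on each orbit, so that $(F^\cone)_x\simeq \rsect_c(\R;F_x)[1]\simeq F_x$. The paper instead stays at the level of kernels and uses $\field_{r(\Gamma_p)}\comp[\R^+\times X]\field_{\Gamma_p}\simeq\field_{\Delta_X}[-1]$ to get $F^\cone\simeq\reim p\opb p F[1]\simeq F$ directly. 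Your stalk argument is legitimate (isomorphisms in $\BDC(\field_X)$ are detected on cohomology stalks), but it obliges you to identify the stalk of $\alpha$ with a specific isomorphism of $F_x$, which is precisely the bookkeeping that your ``variant avoiding stalks'' --- essentially the paper's argument --- makes unnecessary; I would promote that variant to the main argument.

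The one genuine gap is in (iv). Of your two composites, only one is the identity ``by the naturality square'': starting from $f\colon F\to G$ one gets $\alpha_G^{-1}\circ f^\cone\circ\alpha_F=\alpha_G^{-1}\circ\alpha_G\circ f=f$. For the other composite, naturality of $\alpha$ applied to $\psi\colon F^\cone\to G$ gives $\alpha_G^{-1}\circ\psi^\cone=\psi\circ\alpha_{F^\cone}^{-1}$, hence
\[
\alpha_G^{-1}\circ(\psi\circ\alpha_F)^\cone
=\psi\circ\alpha_{F^\cone}^{-1}\circ(\alpha_F)^\cone,
\]
so you still need the identity $(\alpha_F)^\cone=\alpha_{F^\cone}$, or at least that $(\alpha_F)^\cone$ is an isomorphism (which rescues injectivity of $\psi\mapsto\psi\circ\alpha_F$; surjectivity is already free). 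This whiskering identity for the idempotent pointed functor $(\cdot)^\cone$ is true but does not follow from naturality: $(\alpha_F)^\cone$ and $\alpha_{F^\cone}$ are induced by the morphism $\field_{\Delta_X}\to C_X$ placed in the two different slots of $C_X\comp[X]C_X$, and their agreement has to be checked (e.g.\ via \eqref{eq:CXi} and the explicit geometric description of the kernels). The paper sidesteps this entirely: it first proves $H\simeq{}^\cone H$ for conic $H$ and then invokes the kernel adjunction \eqref{eq:compadj} to get $\Hom(F,{}^\cone H)\simeq\Hom(F^\cone,H)$, with no unit/counit identities to verify. Since your treatment of (v) already uses exactly this adjunction between $(\cdot)\comp[X]C_X$ and $[C_X,\cdot]\low X$, I would run (iv) the same way, or else supply the missing identity explicitly.
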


\begin{proof}
(i) The isomorphism $F\comp[X]C_X \simeq C_X
  \comp[X] F$ follows from \eqref{eq:compr} and \eqref{eq:CXi}.
The isomorphism $F^\cone\simeq C_X \comp[X] F$ follows from \eqref{eq:CXii}.

  \medskip\noindent (ii) We have to prove that $\opb \mu (F^\cone)
  \simeq \opb p (F^\cone)$. This is equivalent to
  \[
  F\comp[X] C_X \comp[X]\field_{r(\Gamma_\mu)} \simeq F\comp[X] C_X
  \comp[X]\field_{r(\Gamma_p)},
  \]
  which follows from \eqref{eq:CXiii}.

  \medskip\noindent (iii) If $\alpha$ is an isomorphism, then $F$ is $\R^+$-conic by (ii).
  If $F$ is $\R^+$-conic, then $F^\cone = \reim p\opb \mu F[1] \simeq
  \reim p\opb p F[1] \simeq F$. Here, the last isomorphism follows from the
  isomorphism $\field_{r(\Gamma_p)}\comp[\R^+\times
  X]\field_{\Gamma_p} \simeq \field_{\Delta_X}[-1]$.

  \medskip\noindent (iv) Let $F\in\BDC(\field_X)$ and
  $H\in\BDC_{\R^+}(\field_X)$. By the analogue of (iii) for the right
  conification functor, one has $H\simeq
  {}^\cone H$. Then
  \begin{align*}
    \Hom[\BDC(\field_X)](F,H)
    &\simeq \Hom[\BDC(\field_X)](F,{}^\cone H) \\
    &\simeq \Hom[\BDC(\field_X)](F^\cone,H) \\
    &\simeq \Hom[\BDC_{\R^+}(\field_X)](F^\cone,H) .
  \end{align*}

  \medskip\noindent (v) The similar results for the right
  conification functor can be obtained by adjunction. For example, let
  us show that ${}^\cone F$ is $\R^+$-conic. For any
  $G\in\BDC(\field_{\R^+\times X})$ one has
  \begin{align*}
    \Hom[\BDC(\field_{\R^+\times X})](G,\epb\mu ({}^\cone F))
    &\simeq \Hom[\BDC(\field_X)]((\reim \mu G)^\cone,F) \\
    &\simeq \Hom[\BDC(\field_X)]((\reim p G)^\cone,F) \\
    &\simeq \Hom[\BDC(\field_{\R^+\times X})](G,\epb p ({}^\cone F)) ,
  \end{align*}
  where the second isomorphism follows from \eqref{eq:CXiii}.
Hence $\epb\mu ({}^\cone F) \simeq \epb p
  ({}^\cone F)$.
\end{proof}

\section{Conified sheaves}\label{se:conified}

Let $X$ be a locally compact topological space endowed with an
$\R^+$-action.

\begin{notation}
Let $S\subset X$ be a locally closed subset.
\begin{itemize}
\item[(i)] Denote by $\BDC_{\langle S \rangle}(\field_X)$ the full
  triangulated subcategory of $\BDC(\field_X)$ whose objects $F$
  satisfy $F_{X\setminus S} = 0$.
\item[(ii)] Denote by $\BDC_S(\field_X)$ the full triangulated
  subcategory of $\BDC(\field_X)$ whose objects $F$ satisfy
  $\rsect_{X\setminus S}F = 0$.
\end{itemize}
\end{notation}

\begin{definition}
 Let us say that a subset $Y\subset X$ is $\R^+$-simple if it is locally closed
 and if the multiplication $\mu$ induces a topological isomorphism
 between $\R^+\times Y$ and the set $\R^+Y\subset X$ endowed with
 the induced topology.
\end{definition}

\begin{lemma}
If $Y$ is $\R^+$-simple, then $\R^+Y$ is locally closed in $X$.
\end{lemma}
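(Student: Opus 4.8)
The statement to prove is: if $Y\subset X$ is $\R^+$-simple, then $\R^+Y$ is locally closed in $X$. The plan is to exhibit $\R^+Y$ as the intersection of an open and a closed subset of $X$. First I would use the hypothesis directly: by definition of $\R^+$-simplicity, $\mu$ restricts to a \emph{topological} isomorphism $\mu\colon \R^+\times Y\isoto \R^+Y$, where $\R^+Y$ carries the subspace topology from $X$. Since $Y$ is locally closed in $X$, write $Y = U\cap Z$ with $U$ open and $Z$ closed in $X$; then $\R^+Y = \R^+U\cap \R^+Z$ because the $\R^+$-action is by homeomorphisms (each $t\cdot(-)$ is a homeomorphism of $X$, so $\R^+U$ is open as a union $\bigcup_{t\in\R^+} tU$). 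Thus it suffices to show that $\R^+Z$ is locally closed, or more precisely that $\R^+Y$ is closed in the open set $\R^+U$.

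The key step is to show $\R^+Y$ is closed in $\R^+U$ (equivalently, closed in its own closure intersected with $\R^+U$). Here is where the topological-isomorphism hypothesis does the work. Suppose $x\in\overline{\R^+Y}\cap \R^+U$; I want $x\in\R^+Y$. Pick $t_0>0$ with $t_0 x\in U$ — this is possible since $x\in\R^+U$ — and note $t_0 x\in\overline{\R^+Y}$ as well, since multiplication by $t_0$ is a homeomorphism preserving $\R^+Y$. Now work near $t_0 x$: because $\mu\colon\R^+\times Y\isoto\R^+Y$ is a homeomorphism and $Y$ is locally closed, one can choose a neighborhood on which the projection $\R^+Y\to Y$, $x'\mapsto$ (the unique representative in $Y$), is continuous, and then a net $(y_\alpha)$ in $\R^+Y$ converging to $t_0 x$ gives a net in $\R^+\times Y$ converging to $(\mu^{-1})(t_0 x) = (s,y)$ with $y\in Y$, $s>0$; hence $t_0 x = sy\in\R^+Y$, so $x\in\R^+Y$. (If $X$ is not first countable one replaces nets by the fact that a homeomorphism is in particular a closed embedding onto a subspace, so $\R^+Y$ being a subspace is automatically ``closed in itself'', and the content is purely that limit points lying in $\R^+U$ cannot escape.)

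The cleanest formulation, which I would actually write: $\R^+Y$ is locally closed iff it is open in its closure $\overline{\R^+Y}$. Since $\R^+U$ is open in $X$ and contains $\R^+Y$, it is enough to check $\R^+Y$ is closed in $\R^+U$. Consider the continuous map $\mu\colon \R^+\times U \to X$ (restriction of the action), and observe $\R^+\times Y$ is closed in $\R^+\times U$ because $Y = U\cap Z$ is closed in $U$. One would like to push this forward, but $\mu$ need not be proper or closed in general — this is precisely the point at which the $\R^+$-simplicity hypothesis is indispensable: it tells us $\mu|_{\R^+\times Y}$ is a homeomorphism onto $\R^+Y$, so in particular a closed map onto its image with the subspace topology. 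Combined with $\R^+Y = \mu(\R^+\times Y)$ and the fact that the fibers of $\mu|_{\R^+\times U}$ over points of $\R^+Y$ are single points lying in $\R^+\times Y$, a short point-set argument shows $\R^+Y$ is closed in $\R^+U$.

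**Main obstacle.**
The only real subtlety is that the global action map $\mu\colon\R^+\times X\to X$ is neither proper nor closed, so one cannot simply say ``$\R^+Y$ is the image of a closed set.'' The $\R^+$-simplicity hypothesis is exactly engineered to circumvent this: it replaces the badly-behaved $\mu$ by an honest homeomorphism on the relevant piece. Thus the hard part is bookkeeping — carefully localizing using the decomposition $Y=U\cap Z$ and the openness of $\R^+U$ so that the homeomorphism hypothesis can be applied — rather than any deep idea. I expect the write-up to be only a few lines once the reduction to ``closed in $\R^+U$'' is made.
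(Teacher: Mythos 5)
Your overall strategy---reducing to the claim that $\R^+Y$ is closed in the open set $\R^+U$ for a decomposition $Y=U\cap Z$---does not work, and this is a genuine gap rather than a bookkeeping issue. First, the identity $\R^+(U\cap Z)=\R^+U\cap\R^+Z$ is false in general: a point of $\R^+U\cap\R^+Z$ has an orbit meeting $U$ and meeting $Z$, but not necessarily at the same point (take $X=\R$ with the scaling action, $U=(1,2)$, $Z=\{3\}$). More seriously, the key claim itself fails. Take $X=\R^2$ with the scaling action and $Y=\{(1,s)\colon 0<s<1\}$, which is locally closed and $\R^+$-simple; with $U=\{0<y<1\}$ and $Z=\{x=1\}$ one gets $\R^+U=\{y>0\}$, while $\R^+Y=\{x>0,\ 0<y<x\}$ is not closed in $\{y>0\}$ (the ray $y=x>0$ lies in its closure). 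Finally, the point-set argument you sketch is circular: you apply $\mu^{-1}$ to the limit point $t_0x$, but $\mu^{-1}$ is only defined on $\R^+Y$, and $t_0x\in\R^+Y$ is exactly what you are trying to prove; the preimages $(s_\alpha,y_\alpha)$ of an approximating net need not converge in $\R^+\times Y$.

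The paper's proof goes through local compactness instead, and the contrast is instructive: a locally closed subset of a locally compact (Hausdorff) space is locally compact, so $Y$ and hence $\R^+\times Y$ are locally compact; by $\R^+$-simplicity $\R^+Y$ with the subspace topology is homeomorphic to $\R^+\times Y$, hence locally compact; and a locally compact subspace of a locally compact Hausdorff space is locally closed. This avoids any choice of decomposition $Y=U\cap Z$ and uses the homeomorphism hypothesis only through the purely topological invariant of local compactness.
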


\begin{proof}
Since $Y$ is locally closed, it is locally compact. 
Then $\R^+\times Y$ is locally
compact, and thus so is $\R^+Y$ for the induced topology. 
It follows that $\R^+Y$ is locally closed in $X$.
\end{proof}

\begin{proposition}
  \label{pro:icone}
Let $i\colon Y\to X$ be the embedding of an $\R^+$-simple subset.
There are equivalences
\begin{align*}
\BDC(\field_Y) &\isoto \BDC_{\R^+,\langle \R^+Y \rangle}(\field_X),
& G &\mapsto (\reim i G)^\cone,\\
\BDC(\field_Y) &\isoto \BDC_{\R^+,\R^+Y}(\field_X),
& G &\mapsto {}^\cone(\roim i G),
\end{align*}
with quasi inverses $\opb i[-1]$ and $\epb i[1]$, respectively.
\end{proposition}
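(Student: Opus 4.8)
The plan is to make both functors fully explicit and then read off the statement. Since $Y$ is $\R^+$-simple, the preceding lemma shows that $\R^+Y$ is locally closed; write $k\colon\R^+Y\hookrightarrow X$ for its inclusion and $\widetilde i=\id\times i\colon\R^+\times Y\to\R^+\times X$. Using the homeomorphism $\bar\mu\colon\R^+\times Y\isoto\R^+Y$ induced by $\mu$, let $\iota\colon Y\hookrightarrow\R^+Y$ be the inclusion and $\pi=p_Y\circ\bar\mu^{-1}\colon\R^+Y\to Y$ the projection along orbits, where $p_Y\colon\R^+\times Y\to Y$; then $i=k\iota$, $\pi\iota=\id_Y$, and $\pi$ is a trivial $\R^+$-bundle, so $\epb\pi\simeq\opb\pi[1]$. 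The first step is to prove, functorially in $G\in\BDC(\field_Y)$, the isomorphisms
\begin{equation*}
(\reim iG)^\cone\simeq\reim k\,\opb\pi G[1],\qquad {}^\cone(\roim iG)\simeq\roim k\,\opb\pi G.
\end{equation*}
For the first, base change for proper direct images gives $\opb p\reim iG\simeq\reim{\widetilde i}\opb{p_Y}G$, hence
\[
(\reim iG)^\cone=\reim\mu\,\opb p\reim iG[1]\simeq\reim{(\mu\widetilde i)}\opb{p_Y}G[1]=\reim{(k\bar\mu)}\opb{p_Y}G[1]=\reim k\,\opb\pi G[1],
\]
using $\mu\widetilde i=k\bar\mu$ and that $\bar\mu$ is a homeomorphism carrying $p_Y$ to $\pi$. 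The second isomorphism is obtained the same way, except that, since $\epb\mu\simeq\opb\mu[1]$, one writes $\opb\mu=\epb\mu[-1]$ so as to use the base change $\epb\mu\,\roim i\simeq\roim{i'}\,\epb{\mu'}$ over the cartesian square of $\mu$ and $i$ — which holds with no properness hypothesis, unlike $\opb\mu\,\roim i\simeq\roim{i'}\,\opb{\mu'}$ — the one-dimensional fibres of the projections involved making the leftover shifts cancel. (Alternatively, one may compute the composition kernel $\field_{\Gamma_i}\comp[X]C_X$ of Proposition~\ref{pro:con}(i) and identify it, up to shift, with the constant sheaf on the orbit relation $\{(y,x)\colon x\in\R^+y\}\subset Y\times X$.)

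Granting these formulas, the two functors take values in the asserted subcategories: $(\reim iG)^\cone$ is $\R^+$-conic by Proposition~\ref{pro:con}(ii) and belongs to $\BDC_{\langle\R^+Y\rangle}(\field_X)$ because it lies in the essential image of $\reim k$, and dually ${}^\cone(\roim iG)$ is $\R^+$-conic and lies in $\BDC_{\R^+Y}(\field_X)$. Moreover $\opb i[-1]$ and $\epb i[1]$ are retractions of the first and second functor respectively: since $i=k\iota$ and $\opb k\reim k\simeq\id\simeq\epb k\roim k$, one has $\opb i\reim k\simeq\opb\iota$ and $\epb i\roim k\simeq\epb\iota$, so applying $\opb i[-1]$ to $\reim k\,\opb\pi G[1]$ gives $\opb\iota\opb\pi G\simeq G$, while applying $\epb i[1]$ to $\roim k\,\opb\pi G$ gives $\epb\iota\opb\pi G[1]\simeq\epb\iota\epb\pi G\simeq G$, using $\pi\iota=\id_Y$ and $\epb\pi\simeq\opb\pi[1]$.

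It remains to see that these retractions are also sections, and this is the only step with real content. Let $F\in\BDC_{\R^+,\langle\R^+Y\rangle}(\field_X)$. The support condition gives $F\simeq\reim k\,\opb kF$, so by the first displayed formula it suffices to prove: if $F$ is $\R^+$-conic, then $\opb kF\simeq\opb\pi\opb iF$, and likewise $\epb kF\simeq\epb\pi\epb iF$. This follows by transporting along $\bar\mu$: applying $\opb{\bar\mu}$ to both sides of the first relation turns it, via $k\bar\mu=\mu\widetilde i$ and $\pi\bar\mu=p_Y$, into $\opb{\widetilde i}\opb\mu F\simeq\opb{\widetilde i}\opb pF$, which holds because $F$ is conic; the $\epb{(\,\cdot\,)}$-statement is the identical computation. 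Hence the first functor applied to $\opb iF[-1]$ is, by the displayed formula, isomorphic to $\reim k\,\opb\pi\opb iF\simeq\reim k\,\opb kF\simeq F$. The second equivalence follows by the same reasoning, with $\roim i,\epb i,{}^\cone$ in place of $\reim i,\opb i,(\cdot)^\cone$ and $\rsect_{X\setminus\R^+Y}F=0$ in place of the support condition.

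I expect the only real difficulty to be bookkeeping: keeping straight the various shifts, the variance of $\reim{(\,\cdot\,)}$ against $\roim{(\,\cdot\,)}$, and — above all — the canonical conicity isomorphism $\beta\colon\opb\mu F\isoto\opb pF$ of the Remark in Section~\ref{se:con}, so that every identification above is genuinely functorial and the pieces assemble into equivalences of categories. The one genuine point to watch is to route the computation of $\opb\mu\,\roim i$ through $\epb\mu$, so as to invoke an unconditional base-change isomorphism rather than one that would require a properness assumption unavailable here.
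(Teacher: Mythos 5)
Your proof is correct, and it takes a genuinely different route from the paper's. The paper stays entirely within the kernel calculus of Section~\ref{se:ker}: it reduces both equivalences to the two isomorphisms $\opb i((\reim i G)^\cone)\simeq G[1]$ and $(\reim i \opb i F)^\cone\simeq F_{\R^+Y}[1]$, which it proves by computing the kernels $\field_{\Gamma_i}\comp[X]C_X\comp[X]\field_{r(\Gamma_i)}\simeq\field_{\Delta_Y}[1]$ (where $\R^+$-simplicity forces $t=1$) and $C_X\comp[X]\field_Y\comp[X]C_X\simeq\reim{(j\times j)}C_{\R^+Y}[1]$, and it then deduces the second equivalence from the first by adjunction. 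You instead identify the two functors globally, via base change, as $\reim k\opb\pi[1]$ and $\roim k\opb\pi$ for $\pi\colon\R^+Y\to Y$ the orbit projection, and prove the second equivalence directly by the dual computation; your key observation --- that a conic $F$ satisfies $\opb kF\simeq\opb\pi\opb iF$, obtained by transporting the conicity isomorphism along the homeomorphism $\bar\mu$ --- plays exactly the role of the paper's computation of $C_X\comp[X]\field_Y\comp[X]C_X$. What your approach buys is an explicit geometric description of the essential image (extension by zero of pullbacks along $\pi$) and a transparent use of the $\R^+$-simplicity hypothesis; what the paper's buys is uniformity with the kernel formalism and a shorter treatment of the second equivalence. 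Two small points, neither fatal and both equally implicit in the paper: the passage from $F_{X\setminus\R^+Y}=0$ to $F\simeq\reim k\opb kF$ requires the standard two-step argument for a locally closed subset (first kill $\overline{\R^+Y}\setminus\R^+Y$, which is closed, then use the open/closed triangle inside the remaining open set), and the naturality of your composite isomorphisms rests on the canonical normalization of the conicity isomorphism $\beta$ recalled in the Remark of Section~\ref{se:con}, as you yourself flag. Finally, your caution about $\opb\mu\roim i$ is slightly excessive: $\mu$ is conjugate to the projection $p$ by $e(t,x)=(t^{-1},tx)$, hence is itself a trivial fibration, so smooth base change applies to it directly; routing through $\epb\mu$ as you do is nonetheless perfectly valid.
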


\begin{proof}
(i)
For the first equivalence,
it is enough to prove that for $G\in\BDC(\field_{Y})$ and
$F\in\BDC_{\R^+}(\field_X)$ there are isomorphisms
  \begin{equation}\label{eq:tempisos}
  \opb i((\reim i G)^\cone) \simeq G[1], \qquad
(\reim i \opb i F)^\cone \simeq F_{\R^+Y}[1].
  \end{equation}
In fact, \eqref{eq:tempisos} proves in particular that 
$(\reim i G)^\cone\in\BDC_{\langle \R^+Y \rangle}(\field_{Y})$, since one has
\[
(\reim i G)^\cone \simeq (\reim i \opb i((\reim i G)^\cone))^\cone[-1]
\simeq ((\reim i G)^\cone)_{\R^+Y}.
\]

\medskip\noindent(i-a) 
One has
  \[
  \opb i((\reim i G)^\cone) \simeq G \comp[Y] \field_{\Gamma_i}
  \comp[X] C_X \comp[X] \field_{r(\Gamma_i)}.
  \]
  Consider the projection $q\colon Y\times X\times \R^+ \times X
  \times Y \to Y\times Y$ given by $q(y,x,t,x',y') = (y,y')$ and the
  subset
\[
Q=\{(y,x,t,x',y')\colon x=i(y),\,x'=tx,\,x'=i(y')\}
\]
of the source space.  
  Since $Y$ is $\R^+$-simple, the equality $i(y') = ti(y)$
  implies $t=1$. Hence 
  \[
  \field_{\Gamma_i} \comp[X] C_X \comp[X] \field_{r(\Gamma_i)} \simeq \reim q\field_{Q}[1]
\simeq \field_{\Delta_Y}[1].
  \]
This proves the first isomorphism in \eqref{eq:tempisos}. 

\medskip\noindent(i-b)
For the second isomorphism, since $F\simeq F^\cone$ one has
\[
(\reim i \opb i F)^\cone \simeq ((F^\cone)_Y)^\cone \simeq 
F \comp[X] C_X \comp[X] \field_Y \comp[X] C_X.
\]
Here, $\field_Y$ denotes the sheaf on $X\times X$, extension by zero of the 
constant sheaf on $Y\subset X = \Delta_X \subset X\times X$.

Consider the map $q\colon X\times \R^+ \times X \times \R^+
\times X \to X\times \R^+ \times X$
given by $q(x,t,x',t',x'') = (x,tt',x'')$ and the subset
\[
Q = \{(x,t,x',t',x'')\colon x' = tx,\, x'\in Y,\, x'' = t'x'\}
\]
of the source space.
Since $Y$ is $\R^+$-simple, $q$ induces a topological isomorphism between $Q$
and the subset 
\[
P = \{(x,t'',x'')\colon x,x''\in\R^+Y,\, x''=t''x\}
\]
of the target space.
Hence
\[
C_X \comp[X] \field_Y \comp[X] C_X \simeq \reim{p}\reim{q}\field_Q[2] \simeq \reim{p}\field_P[2] \simeq
\reim{(j\times j)} C_{\R^+Y}[1],
\]
where $j\colon \R^+Y \to X$ is the embedding and $p\colon X\times \R^+ \times X \to X\times X$ the projection. Then
\begin{multline*}
(\reim i \opb i F)^\cone \simeq F \comp[X] \reim{(j\times j)} C_{\R^+Y}[1] \simeq \\
\reim j (F|_{\R^+Y} \comp[\R^+Y] C_{\R^+Y})[1] \simeq 
\reim j (F|_{\R^+Y})[1] \simeq 
F_{\R^+Y}[1],
\end{multline*}
where the third isomorphism is due to the fact that $F|_{\R^+Y}$ is conic.

\medskip\noindent(ii) For the second equivalence in the statement,
it is enough to prove that for $G\in\BDC(\field_{Y})$ and
$F\in\BDC_{\R^+}(\field_X)$ there are isomorphisms
  \begin{equation}
  \epb i({}^\cone(\reim i G)) \simeq G[-1], \qquad
{}^\cone(\roim i \epb i F) \simeq \rsect_{\R^+Y}F[-1].
  \end{equation}
These can be deduced from \eqref{eq:tempisos} by adjunction. For
example, the second isomorphism follows by noticing that for any 
$F'\in \BDC_{\R^+}(\field_X)$ one has
\begin{align*}
\Hom(F',\rsect_{\R^+Y}F[-1]) &\simeq \Hom(F'_{\R^+Y}[1],F) \\
 &\simeq \Hom((\reim i \opb i F')^\cone,F) \\
 &\simeq \Hom(F',\roim i \epb i ({}^\cone F)) \\
 &\simeq \Hom(F^{\prime\cone},\roim i \epb i F) \\
 &\simeq \Hom(F',{}^\cone(\roim i \epb i F)),
\end{align*}
where the fourth equivalence follows from the fact that $F$ and $F'$
are conic.
\end{proof}

Let $Y,Z$ be locally compact spaces endowed with an
$\R^+$-action. Let $X= Y\times Z$ be endowed with the diagonal
$\R^+$-action. Let $z_\circ\in Z$ be such that $\R^+$ acts regularly
on the orbit $\R^+ z_\circ$ (i.e.\ $\mu$ induces an isomorphism $\R^+ \times \{z_\circ\} \simeq \R^+ z_\circ$). Then the embedding
\[
i\colon Y \to X, \qquad y \mapsto (y,z_\circ)
\]
identifies $Y$ with an $\R^+$-simple closed subset of $X$.

\begin{lemma}
  \label{lem:iFhom}
  For $G\in\BDC_{\R^+}(\field_{Y})$ there is
  an isomorphism
  \[
  (\reim i G)^\cone \simeq G\etens \field_{\R^+ z_\circ}[1].
  \]
\end{lemma}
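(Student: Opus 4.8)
The plan is to deduce the isomorphism directly from the relation \eqref{eq:tempisos} established in the proof of Proposition~\ref{pro:icone}. As noted just before the statement, $i$ identifies $Y$ with an $\R^+$-simple closed subset of $X=Y\times Z$, so Proposition~\ref{pro:icone} applies; in particular, for every $\R^+$-conic $F\in\BDC(\field_X)$ one has $(\reim i\opb i F)^\cone\simeq F_{\R^+Y}[1]$, where $\R^+Y$ stands for $\R^+\cdot i(Y)$. So I would look for an $\R^+$-conic $F$ on $X$ with $\opb i F\simeq G$ and $F_{\R^+Y}\simeq G\etens\field_{\R^+ z_\circ}$, and then simply substitute.

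The natural candidate is $F=G\etens\field_Z$, with $\field_Z$ the constant sheaf on $Z$; equivalently $F\simeq\opb q G$ for $q\colon Y\times Z\to Y$ the projection. First I would check that $F$ is $\R^+$-conic: writing $p,\mu\colon\R^+\times X\to X$ for the projection and the diagonal action, $p_Y,\mu_Y\colon\R^+\times Y\to Y$ for the analogous maps on $Y$, and $\pi\colon\R^+\times X\to\R^+\times Y$ for the projection, one has $q\mu=\mu_Y\pi$ and $qp=p_Y\pi$, so the conicity isomorphism $\opb{\mu_Y}G\simeq\opb{p_Y}G$ of $G$ pulls back along $\pi$ to $\opb\mu F\simeq\opb p F$. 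Next, $\opb i F\simeq G$ is immediate from $qi=\id_Y$.

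The remaining point, which is the only one needing an argument, is the identity $F_{\R^+Y}\simeq G\etens\field_{\R^+ z_\circ}$. Since the action on $Y\times\{z_\circ\}$ is the diagonal one, $\R^+Y=\{(ty,tz_\circ)\colon t>0,\ y\in Y\}$, and this equals $Y\times\R^+ z_\circ$ (the inclusion $\subseteq$ is clear, and $\supseteq$ holds because $\R^+$ acts on $Y$); moreover $\R^+ z_\circ$ is locally closed in $Z$---so that $\field_{\R^+ z_\circ}$ is defined---since $\{z_\circ\}$ is $\R^+$-simple in $Z$ by assumption and the Lemma preceding Proposition~\ref{pro:icone} applies. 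Hence, using the compatibility of $\etens$ with $\tens$, $F_{\R^+Y}=(G\etens\field_Z)\tens(\field_Y\etens\field_{\R^+ z_\circ})\simeq G\etens\field_{\R^+ z_\circ}$. Substituting into \eqref{eq:tempisos} then gives $(\reim i G)^\cone\simeq(\reim i\opb i F)^\cone\simeq F_{\R^+Y}[1]\simeq G\etens\field_{\R^+ z_\circ}[1]$. I do not expect any genuine obstacle here: once one takes $F=G\etens\field_Z$ the argument is pure bookkeeping, the mildest subtlety being the elementary check that $\R^+ z_\circ$ is locally closed and that $\R^+Y=Y\times\R^+ z_\circ$, both immediate from the hypotheses on $z_\circ$ and from $X$ carrying the diagonal action.
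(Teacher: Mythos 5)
Your argument is correct, but it takes a genuinely different route from the paper. The paper proves the lemma by a direct kernel computation: since $G\simeq G^\cone$, it reduces the claim to the isomorphism $C_Y \comp[Y] \field_{\Gamma_i} \comp[X] C_X \simeq C_Y \etens \field_{\R^+ z_\circ}[1]$ in $\BDC(\field_{Y\times Y\times Z})$, which it then establishes by writing both sides as $\reim q\field_Q[2]$ for explicit subsets and applying a change of variables $e$ with $qe=q$ (this is where the regularity of the action on the orbit $\R^+z_\circ$ enters). You instead recycle the second isomorphism of \eqref{eq:tempisos} from the proof of Proposition~\ref{pro:icone}, applied to the conic extension $F=\opb q G\simeq G\etens\field_Z$ of $G$; all your supporting checks ($F$ conic via $q\mu=\mu_Y\pi$ and $qp=p_Y\pi$, $\opb iF\simeq G$ from $qi=\id_Y$, $\R^+\cdot i(Y)=Y\times\R^+z_\circ$ thanks to the diagonal action, and local closedness of $\R^+z_\circ$) are sound, and there is no circularity since \eqref{eq:tempisos} is proved before the lemma and independently of it. What your approach buys is economy: the only nontrivial kernel computation is the one already done for $C_X\comp[X]\field_Y\comp[X]C_X$ in Proposition~\ref{pro:icone}, and the lemma becomes pure bookkeeping. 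What the paper's approach buys is a self-contained identity of kernels, $C_Y \comp[Y] \field_{\Gamma_i} \comp[X] C_X \simeq C_Y \etens \field_{\R^+ z_\circ}[1]$, which is slightly stronger in that it does not presuppose a conic extension of the input and exhibits the functor $G\mapsto(\reim iG)^\cone$ itself as composition with an explicit kernel.
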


\begin{proof}
  As $G\simeq G^\cone$, it is enough to prove the isomorphism in $\BDC(\field_{Y \times Y \times Z})$
  \[
  C_Y \comp[Y] \field_{\Gamma_i} \comp[X] C_X \simeq C_Y \etens
  \field_{\R^+ z_\circ}[1].
  \]
  Denote by $w=(t,y,y',t',y'',z,y''',z')$ a point of $\R^+ \times Y
  \times Y \times \R^+ \times Y \times Z \times Y \times Z$, and set
  $Q=\{y'=ty,\,y'=y'',\,z=z_\circ,\,y'''=t'y'',\,z'=t'z\}$. By
  \eqref{eq:comp} one has
  \[
  C_Y \comp[Y] \field_{\Gamma_i} \comp[X] C_X \simeq \reim q
  \field_Q[2],
  \]
  for $q(w)=(y,y''',z')$. Set
  $e(w)=(tt',y,t'y',t^{\prime-1},t'y'',z,y''',z')$. Then
  $qe=q$ and $e(Q) =
  \{y'''=ty,\,y'=y''=y''',\,z=z_\circ,\,z_\circ=t'z'\}$. As $\R^+$
  acts regularly on the orbit of $z_\circ$, one has
  \[
  \reim q \field_{Q} \simeq \reim q \field_{e(Q)} \simeq
(\reim{q_{23}}\field_{\{y'''=ty\}}) \etens \field_{\R^+ z_\circ}.
  \]
\end{proof}

\begin{lemma}
  \label{lem:iAF}
  Let $A\subset Y$ be a locally closed subset. Then there is
  an isomorphism
  \[
  (\reim i \field_A)^\cone \simeq \field_{\R^+i(A)}[1].
  \]
\end{lemma}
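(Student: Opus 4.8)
The plan is to derive the statement from Proposition~\ref{pro:icone}, reducing it to two elementary set-theoretic checks. Recall that $i$ identifies $Y$ with an $\R^+$-simple closed subset of $X$, so that Proposition~\ref{pro:icone} provides an equivalence $G\mapsto(\reim i G)^\cone$ from $\BDC(\field_Y)$ onto $\BDC_{\R^+,\langle\R^+Y\rangle}(\field_X)$, with quasi-inverse $\opb i[-1]$. It is therefore enough to prove:
\begin{itemize}
\item[(a)] $\field_{\R^+ i(A)}[1]\in\BDC_{\R^+,\langle\R^+Y\rangle}(\field_X)$;
\item[(b)] $\opb i\field_{\R^+ i(A)}\simeq\field_A$.
\end{itemize}
Indeed, granting (a) we may apply to $F=\field_{\R^+ i(A)}[1]$ the isomorphism $(\reim i(\opb i F[-1]))^\cone\simeq F$ expressing that $\opb i[-1]$ is quasi-inverse; since $\opb i F[-1]=\opb i\field_{\R^+ i(A)}$, this reads $(\reim i\opb i\field_{\R^+ i(A)})^\cone\simeq\field_{\R^+ i(A)}[1]$. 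Combined with (b), we obtain
\[
(\reim i\field_A)^\cone\simeq(\reim i\opb i\field_{\R^+ i(A)})^\cone\simeq\field_{\R^+ i(A)}[1].
\]

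For (a): by definition of $\R^+$-simplicity, $\mu$ restricts to a topological isomorphism $\R^+\times Y\isoto\R^+Y$, and $\R^+Y$ is locally closed in $X$. Since $A$ is locally closed in $Y$, the set $\R^+\times A$ is locally closed in $\R^+\times Y$, so $\R^+ i(A)=\mu(\R^+\times A)$ is locally closed in $\R^+Y$, hence in $X$; in particular $\field_{\R^+ i(A)}$ is well defined. As $\R^+ i(A)$ is $\R^+$-invariant, $\field_{\R^+ i(A)}$ is $\R^+$-conic, and as $\R^+ i(A)\subset\R^+Y$ we have $(\field_{\R^+ i(A)})_{X\setminus\R^+Y}=0$; thus $\field_{\R^+ i(A)}[1]\in\BDC_{\R^+,\langle\R^+Y\rangle}(\field_X)$.

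For (b): one has $\opb i\field_{\R^+ i(A)}=\field_{i^{-1}(\R^+ i(A))}$, and I claim $i^{-1}(\R^+ i(A))=A$. The inclusion $A\subset i^{-1}(\R^+ i(A))$ is clear. Conversely, if $i(y)=t\,i(a)$ for some $t\in\R^+$ and $a\in A$, then comparing the $Z$-components of $(y,z_\circ)$ and of $(ta,\,t z_\circ)$ yields $z_\circ=t z_\circ$, which forces $t=1$ since $\R^+$ acts regularly on the orbit $\R^+ z_\circ$; hence $y=a\in A$.

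I expect no serious obstacle: the only delicate points are the verification in (a) that $\R^+ i(A)$ is locally closed and $\R^+$-invariant --- which is exactly what $\R^+$-simplicity is designed to give --- and the identity $i^{-1}(\R^+ i(A))=A$ in (b), which uses the regularity of the $\R^+$-action on $\R^+ z_\circ$. Alternatively, one could argue by a direct kernel computation: by Proposition~\ref{pro:con}(i) and \eqref{eq:comp}, $(\reim i\field_A)^\cone\simeq(\reim i\field_A)\comp[X]C_X\simeq\reim q\field_Q[1]$ with $q\colon X\times\R^+\times X\to X$, $(x,t,x')\mapsto x'$, and $Q=\{(x,t,x')\colon x\in i(A),\ x'=tx\}$; the $\R^+$-simplicity of $Y$ shows that $q$ restricts to a homeomorphism of $Q$ onto the locally closed subset $\R^+ i(A)$, whence $\reim q\field_Q\simeq\field_{\R^+ i(A)}$.
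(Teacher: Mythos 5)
Your proof is correct; the paper in fact states Lemma~\ref{lem:iAF} without proof, so there is no argument to compare against line by line, and both of the routes you give are sound uses of the tools already set up. The direct computation you sketch at the end is the most economical and is presumably what the author had in mind: since $\reim i\field_A=\field_{i(A)}$ and, by $\R^+$-simplicity, $\mu$ restricts to a homeomorphism of $\R^+\times i(A)$ onto the locally closed set $\R^+i(A)$, one gets $(\reim i\field_A)^\cone=\reim\mu\opb p\,\field_{i(A)}[1]\simeq\field_{\R^+i(A)}[1]$ straight from the definition of $(\cdot)^\cone$, without invoking the equivalence of Proposition~\ref{pro:icone}. In your first route, the one step worth writing out is why $\field_{\R^+i(A)}$ is $\R^+$-conic: for an $\R^+$-invariant locally closed subset $S$ one has $\mu^{-1}(S)=\R^+\times S=p^{-1}(S)$, hence $\opb\mu\field_S\simeq\opb p\field_S$; you assert this but it deserves a line. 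With that, (a) and (b) together with the counit of the equivalence of Proposition~\ref{pro:icone} do yield the statement.
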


\section{Fourier-Sato transform}\label{se:FS}

Here, we recall the definition and main properties of the
Fourier-Sato transform, referring to \cite[\S3.7]{KS90} for details.

\medskip

Let $\V$ and $\W$ be dual real vector spaces by
the pairing
\[
\V\times\W\to\R, \quad (x,y) \mapsto \langle x,y\rangle.
\]
They are endowed with a natural $\R^+$-action.

\begin{definition}
The Fourier-Sato transform and its adjoint are the functors
\begin{align*}
  (\cdot)^\wedge&\colon\BDC(\field_\V)\to \BDC_{\R^+}(\field_{\W}),&
  F&\mapsto F \comp[\V] \field_{\{\langle x,y\rangle \leq 0\}}, \\
  (\cdot)^\vee&\colon\BDC(\field_{\W})\to \BDC_{\R^+}(\field_\V),&
  G&\mapsto [\field_{\{\langle x,y\rangle \leq 0\}},G]\low{\W}.
\end{align*}
One uses the same notations when interchanging the roles of $\V$
and $\W$.
\end{definition}

\begin{theorem}[{\cite[Theorem~3.7.9]{KS90}}]
\label{thm:F}
The Fourier-Sato transform induces an equivalence of
    categories
    \[
    (\cdot)^\wedge \colon \BDC_{\R^+}(\field_\V) \isoto \BDC_{\R^+}(\field_{\W})
    \]
with quasi-inverse $(\cdot)^\vee$.
\end{theorem}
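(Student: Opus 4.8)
The plan is to prove the theorem by composing kernels, in the spirit of Sections~\ref{se:ker} and~\ref{se:con}. Set $P=\{(x,y)\in\V\times\W\colon\langle x,y\rangle\le0\}$, so that $(\cdot)^\wedge$ is the functor~\eqref{eq:compK1} attached to the kernel $\field_P$ and $(\cdot)^\vee$ the functor~\eqref{eq:compK2} attached to the same kernel; in particular, as noted right after~\eqref{eq:compadj}, $(\cdot)^\vee$ is right adjoint to $(\cdot)^\wedge$. Since $P$ is stable under the $\R^+$-actions on $\V$ and on $\W$, inspection of the defining formulas shows that $F^\wedge$ is $\R^+$-conic for every $F\in\BDC(\field_\V)$ and $G^\vee$ is $\R^+$-conic for every $G\in\BDC(\field_\W)$. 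Hence $(\cdot)^\wedge$ and $(\cdot)^\vee$ restrict to an adjoint pair between $\BDC_{\R^+}(\field_\V)$ and $\BDC_{\R^+}(\field_\W)$, and it suffices to prove that the unit $\id\to(\cdot)^\vee\circ(\cdot)^\wedge$ and the counit $(\cdot)^\wedge\circ(\cdot)^\vee\to\id$ are isomorphisms on conic objects --- equivalently, that both functors are fully faithful on conic sheaves, an adjoint pair of fully faithful functors being an equivalence.

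I would treat the unit in detail, the counit being entirely analogous with the roles of $\V$ and $\W$ interchanged. Fix $F\in\BDC_{\R^+}(\field_\V)$. By definition $(F^\wedge)^\vee=[\field_P,F\comp[\V]\field_P]\low\W$; expanding this through the kernel calculus of Section~\ref{se:ker}, and using that, since $F$ is conic, $F\simeq F^\cone\simeq F\comp[\V]C_\V$ by Proposition~\ref{pro:con}, the whole problem reduces to a single isomorphism of kernels on $\V\times\V$: the kernel $\mathcal K_\V\in\BDC(\field_{\V\times\V})$ that governs the composite $(\cdot)^\vee\circ(\cdot)^\wedge$ on conic sheaves --- essentially a composition of $\field_P$ with itself over $\W$, corrected by the $[\cdot,\cdot]$-operations --- should satisfy
\[
C_\V\comp[\V]\mathcal K_\V\simeq C_\V,
\]
and symmetrically $C_\W\comp[\W]\mathcal K_\W\simeq C_\W$ for the counit. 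Since $C_\V\comp[\V]\field_{\Delta_\V}\simeq C_\V$, this says precisely that $\mathcal K_\V$ and the identity kernel $\field_{\Delta_\V}$ have isomorphic conifications.

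The kernel $\mathcal K_\V$ is computed by the formula~\eqref{eq:comp}: up to a shift, its stalk at a point $(x,x')\in\V\times\V$ is a suitable compactly supported cohomology of the convex cone $\sigma(x,x')=\{y\in\W\colon\langle x,y\rangle\le0,\ \langle x',y\rangle\le0\}$, or of its interior variant $\{y\in\W\colon\langle x,y\rangle\le0,\ \langle x',y\rangle<0\}$, depending on the precise form of~\eqref{eq:compK2}. After a linear change of variables such a cone is a product of a linear subspace with an intersection of at most two half-planes in a plane, so its cohomology is given by Künneth in terms of a handful of elementary cases; a case analysis on the mutual position of $x$ and $x'$ then shows that the stalk vanishes unless $x$ and $x'$ are proportional with non-negative ratio or one of them is $0$. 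Thus $\mathcal K_\V$ is concentrated on the incidence locus $\{(x,x')\colon x'=\lambda x,\ \lambda\ge0\}$ --- which contains the diagonal --- together with $(\V\times\{0\})\cup(\{0\}\times\V)$, and over the diagonal the stalk reduces to $\field$ in a single degree, which is the contribution destined to produce $\field_{\Delta_\V}$.

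The remaining and main difficulty is to show that the other contributions disappear after conification, and this is where both the behaviour at the origin and the convexity of the cones really enter. On the one hand one must pin down $\mathcal K_\V$ along $(\V\times\{0\})\cup(\{0\}\times\V)$, where $\sigma(x,x')$ degenerates and its cohomology jumps --- this degeneration is the source of the various shifts that appear throughout. On the other hand one must check that, after composition with $C_\V$, the zero-section contribution and the off-diagonal ``ray'' contribution cancel one another, leaving only the diagonal; these cancellations rest on the $\R^+$-equivariance identities~\eqref{eq:CXiii} of the conification kernel $C_\V$ and on convexity, which through the bipolar theorem is what forces proportionality to be the only incidence relation that survives conification. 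A useful cross-check at this stage is to verify the statement directly on the finitely many (up to shift) indecomposable conic sheaves on the line.
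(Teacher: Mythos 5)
The paper offers no proof of this statement---it is quoted from \cite{KS90}---so there is nothing internal to compare with; I assess your argument on its own terms. The opening reductions are sound: $(\cdot)^\vee$ is right adjoint to $(\cdot)^\wedge$ by \eqref{eq:compadj}, both functors take values in conic objects because $\{\langle x,y\rangle\le0\}$ is stable under scaling in each variable separately, and an adjoint pair of functors is an equivalence once the unit and counit are invertible. After that there are two genuine gaps.

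First, the reduction of the unit to a kernel identity $C_\V\comp[\V]\mathcal K_\V\simeq C_\V$ is not automatic. The composite $F\mapsto(F^\wedge)^\vee=[\field_P,F\comp[\V]\field_P]\low\W$ mixes the composition \eqref{eq:comp1} with its adjoint \eqref{eq:comp2}, and the natural morphism $F\comp[\V][\field_P,\field_P]\low\W\to[\field_P,F\comp[\V]\field_P]\low\W$ that would let you pull $F$ out of the bracket is not an isomorphism in general: it amounts to a projection formula for a non-proper direct image, i.e.\ a properness (or constructibility) argument on the supports of the kernels that you do not supply. Kashiwara--Schapira avoid forming $\mathcal K_\V$ altogether: they compute sections of $F^\wedge$ over convex open cones and sections supported on closed convex cones directly in terms of $F$, and deduce the inversion from the bipolar theorem. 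Second, even granting the reduction, the decisive step---that after conification only the diagonal contribution of $\mathcal K_\V$ survives---is exactly the content of the theorem, and you defer it explicitly as ``the remaining and main difficulty''. The case analysis you outline is moreover not innocuous: for the plain composition $\field_P\comp[\W]\field_{r(P)}$ the stalk at $(x,x')$ is the compactly supported cohomology of $\sigma(x,x')=\{y\colon\langle x,y\rangle\le0,\ \langle x',y\rangle\le0\}$, which \emph{vanishes} on the diagonal and on the whole positive-ratio locus (a closed half-space, and a proper closed wedge times a linear space, have no compactly supported cohomology) and survives only where $x'$ is a negative multiple of $x$, besides the origin---consistent with $F^\vee\simeq F^{\wedge a}[n]$ but opposite to the support you assert. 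The $\ast$-variant relevant to the unit must therefore be computed separately, with the correct shifts at the degenerate loci $x=0$ and $x'=0$. Until both points are carried out, the proposal is an outline rather than a proof.
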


Denote by $n$ the dimension of $\V$.
For $F\in\BDC_{\R^+}(\field_\V)$ one has
\begin{equation}
\label{eq:wedgevee}
F^\vee \simeq
    F^{\wedge a}[n],
\end{equation}
where $G^a = \opb a G$ for $a$ the antipodal map $a(y) = -y$.

Let $\V_i$  ($i=1,2$) be a real vector space of dimension $n_i$.
Denote by ${}^t f\colon \V^*_2 \to \V^*_1$ the transpose of a linear map $f \colon \V_1 \to \V_2$.
For $F_i\in\BDC_{\R^+}(\field_{V_i})$, one has
\begin{align}
	\label{eq:Fouetens}
(F_1\etens F_2)^\wedge &\simeq F_1^\wedge \etens F_2^\wedge, \\
	\label{eq:Foueim}
(\reim f F_1)^ \wedge &\simeq \opb{{}^t f}(F_1^\wedge), \\
(\opb f F_2)^\wedge &\simeq \reim{{}^t f}(F_2^\wedge)[n_2-n_1].
\end{align}

Denote by $s\colon \V\times\V\to\V$ the vector sum $s(x_1,x_2) = x_1+x_2$.
The convolution of $F,F'\in\BDC(\field_\V)$ is defined by
\[
F \conv F' = \reim s(F\etens F').
\]
Following~\cite{Tam10} (see also~\cite{GS11}), the right adjoint to
the convolution is given by
\[
\hom^*(F,F') = \roim s\rhom(\opb{q_2}F^a,\epb{q_1}F').
\]

Noticing that the diagonal embedding is the transpose of the vector
sum, for $F,F'\in\BDC_{\R^+}(\field_\V)$ one gets
\begin{equation}
\label{eq:convF}
(F \tens F')^\wedge \simeq F^\wedge \conv F^{\prime\wedge}[n].
\end{equation}
By adjunction one then has
\begin{equation}
\label{eq:dconvF}
\hom(F, F')^\vee \simeq \hom^*(F^\vee, F^{\prime\vee}).
\end{equation}

\medskip

Let us end this section by recalling some computations of Fourier transforms that we shall
use later.

A subset $\gamma\subset \V$ such that $\gamma = \R^+\gamma$ is called a cone.
A cone $\gamma$ is called proper if it contains no lines. 
Note that $\gamma$ is convex if and only if $\gamma+\gamma=\gamma$.
The polar of $\gamma\subset \V$ is the cone
\[
\gamma^\circ = \{y\in\W \colon \langle x,y \rangle \geq 0, \ \forall
x\in\gamma \}.
\]

\begin{lemma}[{\cite[Lemma 3.7.10]{KS90}}]
  \label{lem:Fex}
  \begin{itemize}
  \item[(i)] Let $\gamma\subset \V$ be a proper closed convex cone
    containing the origin. Then
    \[
    \field_\gamma^\wedge \simeq \field_{\Int \gamma^\circ}.
    \]
  \item[(ii)] Let $\gamma\subset \V$ be an open convex cone. Then
    \[
    \field_\gamma^\wedge \simeq \field_{\gamma^{\circ a}}[-n].
    \]
   \end{itemize}
\end{lemma}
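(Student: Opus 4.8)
The plan is to compute $\field_\gamma^\wedge$ stalkwise by proper base change, and then to identify it with the asserted constant sheaf by producing a canonical morphism and checking it on stalks. Since $\field_\gamma^\wedge = \reim{q_2}\bigl(\opb{q_1}\field_\gamma\tens\field_{\{\langle x,y\rangle\leq0\}}\bigr)$ with $q_1,q_2$ the projections from $\V\times\W$, proper base change gives
\[
(\field_\gamma^\wedge)_y \simeq \rsect_c\bigl(\gamma\cap\{x\colon\langle x,y\rangle\leq0\};\field\bigr)
\qquad(y\in\W),
\]
and symmetrically $(\field_\gamma^\wedge)_x\simeq\rsect_c\bigl(\gamma\cap\{y\colon\langle x,y\rangle\leq0\};\field\bigr)$ in the situation of (ii). So everything reduces to the compactly supported cohomology of intersections of a convex cone with closed half-spaces through the origin, and the two inputs I would isolate first are: \textbf{(a)} if $K$ is a closed convex cone containing no line with $K\neq\{0\}$, then $\rsect_c(K;\field)=0$, while $\rsect_c(\{0\};\field)=\field$ in degree $0$; and \textbf{(b)} if $\Omega$ is a nonempty open convex subset of an $n$-dimensional real vector space, then $\rsect_c(\Omega;\field)\simeq\field[-n]$. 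Fact (b) holds since such $\Omega$ is homeomorphic to $\R^n$. For (a): picking $v\in K\setminus\{0\}$, the set $K$ is stable by the flow $x\mapsto x+sv$ ($s\geq0$) because it is a convex cone, and since $K$ is proper there is a linear functional positive on $K\setminus\{0\}$ and positive on $v$, so this flow escapes every compact; hence the one-point compactification $K^+$ is contractible, and $\rsect_c(K;\field)$, being the reduced cohomology of $K^+$, vanishes.

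For part (i), the set $K_y:=\gamma\cap\{x\colon\langle x,y\rangle\leq0\}$ is a closed convex cone without lines, being contained in $\gamma$, and it is reduced to $\{0\}$ exactly when $\langle x,y\rangle>0$ for all $x\in\gamma\setminus\{0\}$, i.e.\ (a standard consequence of $\gamma$ being proper) exactly when $y\in\Int\gamma^\circ$. By (a), $\field_\gamma^\wedge$ has stalk $\field$ in degree $0$ over $\Int\gamma^\circ$ and $0$ elsewhere; in particular it is concentrated in degree $0$. To upgrade this to the stated isomorphism I would restrict the support $\{x\in\gamma,\ \langle x,y\rangle\leq0\}$ of the kernel to its closed subset $\{x=0\}$ and apply $\reim{q_2}$, obtaining a morphism $\field_\gamma^\wedge\to\field_\W$ which on stalks is the identity of $\field$ over $\Int\gamma^\circ$; its composite with $\field_\W\to\field_{\W\setminus\Int\gamma^\circ}$ vanishes on stalks, hence vanishes (we are comparing sheaves in degree $0$), so it factors through $\field_{\Int\gamma^\circ}=\operatorname{fib}(\field_\W\to\field_{\W\setminus\Int\gamma^\circ})$, and the resulting $\field_\gamma^\wedge\to\field_{\Int\gamma^\circ}$ is an isomorphism on stalks, hence an isomorphism.

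For part (ii), with $\gamma\subset\W$ an open convex cone, write $L_x:=\gamma\cap\{y\colon\langle x,y\rangle\leq0\}$. If $x\in\gamma^{\circ a}$, i.e.\ $\langle x,y\rangle\leq0$ for all $y\in\gamma$, then $L_x=\gamma$ and $(\field_\gamma^\wedge)_x\simeq\field[-n]$ by (b). Otherwise some $y_1\in\gamma$ has $\langle x,y_1\rangle>0$; if in addition $x\in\gamma^\circ$ then, $\gamma$ being open and $x\neq0$, one has $\langle x,y\rangle>0$ for \emph{all} $y\in\gamma$, so $L_x=\emptyset$; and if $x\notin\gamma^\circ$ there is also $y_2\in\gamma$ with $\langle x,y_2\rangle<0$, so $L_x$ is a nonempty convex set which is a topological manifold with (nonempty, convex hence contractible) boundary $\gamma\cap x^\perp$, whence $\rsect_c(L_x;\field)=0$ (e.g.\ by Poincar\'e--Lefschetz duality, $H^\bullet_c(L_x)\cong H^{n-\bullet}(L_x,\gamma\cap x^\perp)=0$ since both $L_x$ and $\gamma\cap x^\perp$ are contractible). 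Thus $\field_\gamma^\wedge$ is concentrated in degree $n$ with the same stalks as $\field_{\gamma^{\circ a}}[-n]$; and since $\gamma^{\circ a}\times\gamma$ is a closed subset of the support $\{(x,y)\colon y\in\gamma,\ \langle x,y\rangle\leq0\}$ of the kernel, restriction followed by $\reim{q_1}$ yields a morphism $\field_\gamma^\wedge\to\reim{q_1}\field_{\gamma^{\circ a}\times\gamma}\simeq\field_{\gamma^{\circ a}}\tens\rsect_c(\gamma;\field)\simeq\field_{\gamma^{\circ a}}[-n]$, again by (b); it is an isomorphism on stalks, and both sides being sheaves placed in degree $n$, it is an isomorphism.

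I expect the main obstacle to be the cohomological vanishing, namely facts (a) and (b) together with the variant used in part (ii), where the intersection $L_x$ is neither closed nor a pointed cone and one must recognize it as a manifold with boundary in order to kill its compactly supported cohomology. Everything else — the base-change formula for the stalks, the two canonical comparison morphisms, and the passage from ``isomorphism on stalks'' to ``isomorphism'' (legitimate because in each part $\field_\gamma^\wedge$ turns out to be concentrated in a single degree, so we are comparing honest sheaves) — is routine.
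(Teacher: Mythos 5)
Your argument is correct, but there is nothing in the paper to compare it with: Lemma~\ref{lem:Fex} is quoted from \cite[Lemma~3.7.10]{KS90} and the paper supplies no proof, so your write-up is in effect a self-contained replacement for the citation. The two points that genuinely needed care are both handled properly. First, you do not stop at the stalkwise computation (which by itself would not determine the object) but construct comparison morphisms by restricting the kernel $\field_{\{x\in\gamma,\ \langle x,y\rangle\leq 0\}}$ to the closed subsets $\{0\}\times\W$, resp.\ $\gamma^{\circ a}\times\gamma$, of its support and pushing forward; since the stalk computation shows the transform is concentrated in a single degree, checking these morphisms on stalks legitimately yields the stated isomorphisms. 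Second, the cohomological inputs are sound: fact (a) follows as you say from properness (a linear functional strictly positive on $K\setminus\{0\}$ makes the translation flow escape every compact, so the one-point compactification is contractible and $H^\bullet_c(K)$ vanishes), and the vanishing of $H^\bullet_c(L_x)$ in part (ii) is correctly reduced to the contractibility of $L_x$ and of its boundary $\gamma\cap x^{\perp}$ via Lefschetz duality --- equivalently, Poincar\'e--Verdier duality together with $\omega_M\simeq\field_{M\setminus\partial M}[n]$ for the oriented manifold with boundary $M=L_x$. Two cosmetic remarks: the identification $\Int\gamma^\circ=\{y\colon\langle x,y\rangle>0\ \text{for all}\ x\in\gamma\setminus\{0\}\}$ really uses the closedness of $\gamma$ (compactness of $\gamma$ intersected with the unit sphere) rather than its properness, which only guarantees the two sets are simultaneously empty or not; and in (ii) one must of course take $\gamma\neq\emptyset$, as is implicit in the statement.
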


Let $\V = \R^p \times \R^q \times \R^r$. Denote
$(x',x'',x''')$ the coordinate system on $\V$ and by 
$(y',y'',y''')$ the dual coordinate system on $\W$.
Set
\[
x^{\prime 2} = x_1^{\prime 2}+\cdots+x_p^{\prime 2}, \quad
x^{\prime\prime 2} = x_1^{\prime\prime 2}+\cdots+x_q^{\prime\prime 2}.
\]

\begin{lemma}[{\cite[Lemma~6.2.1]{KS97}}]\label{lem:qcone}
Let
\[
\gamma = \{x^{\prime 2}-x^{\prime\prime 2} \leq 0,\ x'''=0\}
\]
be a quadratic cone. Then
\[
\field_\gamma^\wedge \simeq
\field_{\{y^{\prime 2}-y^{\prime\prime 2} \geq 0\}}[-q].
\]
\end{lemma}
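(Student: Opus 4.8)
The plan is to reduce to the case $\V=\R^p\times\R^q$ (that is, $r=0$) and then compute $\field_\gamma^\wedge$ directly from the definition of the Fourier--Sato transform, pushing out the $x'$- and the $x''$-variables in turn. For the reduction, write $\gamma=\gamma_0\times\{0\}$ with $\gamma_0=\{x^{\prime 2}-x^{\prime\prime 2}\le0\}$ in $\R^p\times\R^q$ and $\{0\}=\{x'''=0\}$ in $\R^r$, so that $\field_\gamma\simeq\field_{\gamma_0}\etens\field_{\{0\}}$. Since the polar of $\{0\}\subset\R^r$ is all of $\R^r$, Lemma~\ref{lem:Fex}(i) gives $\field_{\{0\}}^\wedge\simeq\field_{\R^r}$, and then \eqref{eq:Fouetens} yields $\field_\gamma^\wedge\simeq\field_{\gamma_0}^\wedge\etens\field_{\R^r}$; as $\{y^{\prime 2}-y^{\prime\prime 2}\ge0\}$ is likewise a product with $\R^r$, we may assume $r=0$. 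The degenerate cases $p=0$ (where $\gamma_0=\R^q$) and $q=0$ (where $\gamma_0=\{0\}$) follow at once from Lemma~\ref{lem:Fex}, so assume $p,q\ge1$ from now on.

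Unwinding the definition, $\field_{\gamma_0}^\wedge\simeq\reim{q_2}\field_Z$, where $Z=\{(x,y)\in\V\times\W : x\in\gamma_0,\ \langle x,y\rangle\le0\}$ and $q_2\colon\V\times\W\to\W$. I would factor $q_2=q_3\circ q_{23}$, where $q_{23}\colon\R^p\times(\R^q\times\W)\to\R^q\times\W$ forgets the $x'$-coordinates and $q_3\colon\R^q\times\W\to\W$ is the remaining projection. The fibre of $q_{23}|_Z$ over $(x'',y)$ is $\{x'\in\R^p : x^{\prime 2}\le x^{\prime\prime 2},\ \langle x',y'\rangle\le-\langle x'',y''\rangle\}$, the intersection of a closed ball with a closed half-space: it is compact and convex, and nonempty precisely when $\langle x'',y''\rangle\le|x''|\,|y'|$. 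Hence $q_{23}|_Z$ is proper onto the closed cone $W=\{(x'',y) : \langle x'',y''\rangle\le|x''|\,|y'|\}$, with compact convex (or empty) fibres, so $\reim{q_{23}}\field_Z\simeq\field_W$ and therefore $\field_{\gamma_0}^\wedge\simeq\reim{q_3}\field_W$. This replaces the problem by a Fourier-type transform of the constant sheaf on the considerably simpler cone $W$.

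It remains to compute $\reim{q_3}\field_W$, which I would do on stalks. For $y_0=(y_0',y_0'')\ne0$, base change gives $(\reim{q_3}\field_W)_{y_0}\simeq\rsect_{\mathrm c}(\Omega_{y_0};\field)$, where $\Omega_{y_0}=\{x''\in\R^q : \langle x'',y_0''\rangle\le|x''|\,|y_0'|\}$ is a closed cone in $\R^q$. If $|y_0'|\ge|y_0''|$ then $\Omega_{y_0}=\R^q$, so the stalk is $\field[-q]$; if $|y_0'|<|y_0''|$ then, after rotating $y_0''$ onto a coordinate axis, $\Omega_{y_0}=\{x_1''\le c|x''|\}$ with $c=|y_0'|/|y_0''|\in[0,1)$, which is the topological cone over the spherical cap $\{x''\in S^{q-1} : x_1''\le c\}$; since that cap is contractible ($\cong D^{q-1}$), one gets $\rsect_{\mathrm c}(\Omega_{y_0};\field)=0$; and at $y_0=0$ the stalk is $\rsect_{\mathrm c}(\gamma_0;\field)\simeq\field[-q]$. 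Thus $\field_\gamma^\wedge$ is a single sheaf placed in degree $q$, supported on $\{y^{\prime 2}\ge y^{\prime\prime 2}\}$ and of rank one there; identifying it with $\field_{\{y^{\prime 2}-y^{\prime\prime 2}\ge0\}}[-q]$ then amounts to checking that this sheaf is constant along that (connected) cone, which follows from the uniformity in $y_0$ of the fibre computation above.

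The two points needing genuine care are the sheaf-theoretic identifications: that $\reim{q_{23}}\field_Z\simeq\field_W$ (the pushforward of a constant sheaf along a proper map with contractible nonempty fibres is constant on the image) and the passage from the stalk computation to an isomorphism of complexes, i.e.\ ruling out hidden higher cohomology or monodromy in $\field_\gamma^\wedge$. As a fallback I would instead run a dévissage through the ``ice-cream'' cones $\gamma_\omega=\{x'\in\R^p,\ x''\in\R^q : |x'|\le\langle x'',\omega\rangle\}$ for $\omega$ in the unit sphere of $\R^q$: each $\field_{\gamma_\omega}^\wedge$ is given by Lemma~\ref{lem:Fex}(i), and for $q=1$ the result drops out of two Mayer--Vietoris triangles (decompose $\gamma_0$ into two such cones meeting at the origin, and decompose $\W$ along $\{y^{\prime 2}=y^{\prime\prime 2}\}$), while for $q>1$ one must integrate over the sphere — which carries essentially the same analytic content as the first approach, and is where I expect the real work to lie.
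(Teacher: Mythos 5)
Your reduction to the case $r=0$ is correct and is, in substance, the whole of the paper's proof: the paper factors $\field_\gamma\simeq\reim i\field_{\gamma_0}$ through the embedding $i\colon\R^p\times\R^q\to\V$ and applies \eqref{eq:Foueim} to get $\field_\gamma^\wedge\simeq\opb p(\field_{\gamma_0}^\wedge)$, which is equivalent to your $\etens\,\field_{\R^r}$ computation; it then simply cites \cite[Lemma~6.2.1]{KS97} for the case $r=0$ (the lemma is in fact stated as a quotation of that result). So everything after your first paragraph is an attempt to reprove the cited lemma rather than to reproduce the paper's argument.

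That attempt is mostly sound --- the identification $\reim{q_{23}}\field_Z\simeq\field_W$ via proper base change over compact convex fibres is fine, and the stalkwise computation of $\reim{q_3}\field_W$ is correct --- but it has one genuine gap, exactly where you flag it. Knowing that every stalk is concentrated in degree $q$ does show that the complex is a single sheaf placed in degree $q$, supported on $S=\{y^{\prime2}-y^{\prime\prime2}\geq0\}$ with one-dimensional stalks there; but ``uniformity in $y_0$ of the fibre computation'' is not an argument that this sheaf is $\field_S$. A sheaf on $S$ with all stalks isomorphic to $\field$ need not be constant, and to exclude monodromy (and to check constancy across the locus $\{y^{\prime2}=y^{\prime\prime2}\}$ and the vertex) you must either construct a global comparison morphism $\field_S[-q]\to\field_{\gamma_0}^\wedge$ and verify it on stalks, or run a d\'evissage. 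Your fallback does close the gap for $q=1$: writing $\gamma_0=\gamma_+\cup\gamma_-$ with $\gamma_\pm=\{|x'|\leq\pm x''\}$ and $\gamma_+\cap\gamma_-=\{0\}$, the distinguished triangle $\field_{\gamma_0}\to\field_{\gamma_+}\oplus\field_{\gamma_-}\to\field_{\{0\}}\to\field_{\gamma_0}[1]$ together with Lemma~\ref{lem:Fex}~(i) gives the answer with no stalk analysis at all. For $q\geq2$ no such finite decomposition into proper convex cones exists, and the integration over the sphere of directions that you anticipate is precisely the content of \cite[Lemma~6.2.1]{KS97}; as written, your proposal does not carry it out.
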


\begin{proof}
  The transpose of the embedding $i\colon \R^p \times \R^q \to \V$,
  $(x',x'')\mapsto(x',x'',0)$, is the projection $p\colon \W \to \R^p
  \times \R^q$. By \eqref{eq:Foueim}, one has
\[
\field_{\{x^{\prime 2}-x^{\prime\prime 2} \leq 0,\ x'''=0\}}^\wedge \simeq
(\reim i \field_{\{x^{\prime 2}-x^{\prime\prime 2} \leq 0\}})^\wedge \simeq
\opb p (\field_{\{x^{\prime 2}-x^{\prime\prime 2} \leq 0\}}^\wedge).
\]
We thus reduce to the case $r=0$, discussed in \cite[Lemma~6.2.1]{KS97}.
\end{proof}

\section{Conified Fourier-Sato transform}\label{se:iF}

Here, in order to apply the Fourier-Sato transform to
not necessarily conic sheaves, we will
compensate the lack of homogeneity by adding an extra variable.

\medskip

As in the previous section, let $\V$ and $\W$ be dual real vector spaces.

Note that the conification functor on vector spaces
can be expressed in terms of the Fourier-Sato transform:

\begin{lemma}
  For $F\in \BDC(\field_\V)$ one has
  \begin{align*}
  F^\cone &\simeq F^{\wedge\wedge a}[n],&\qquad 
F^\wedge &\simeq F^{\cone\wedge}, \\
  {}^\cone F &\simeq F^{\vee\vee a}[-n],&\qquad 
F^\vee &\simeq ({}^\cone F)^{\vee}.
  \end{align*}
\end{lemma}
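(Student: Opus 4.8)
The plan is to reduce all four isomorphisms to two elementary observations and then to combine them formally, using only Theorem~\ref{thm:F} and \eqref{eq:wedgevee}.

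First I would note that the Fourier kernel $\field_{\{\langle x,y\rangle\leq 0\}}$ on $\V\times\W$ is conic separately in each of its two variables, since $\langle tx,y\rangle=t\langle x,y\rangle=\langle x,ty\rangle$ for $t\in\R^+$. Composing a kernel on $\V\times\W$ with $C_\V$ (resp.\ $C_\W$) along the matching factor is nothing but the conification in that variable: this is Proposition~\ref{pro:con}(i) applied to $\V\times\W$ endowed with the $\R^+$-action on one factor, the other factor being treated as a parameter (equivalently, $C_\V\etens\field_{\Delta_\W}$ is the conification kernel of $\V\times\W$ for the first-factor action). As a conic object is unchanged by its conification (Proposition~\ref{pro:con}(iii)), composing $C_\V$ or $C_\W$ on either side with $\field_{\{\langle x,y\rangle\leq 0\}}$ along a matching factor returns $\field_{\{\langle x,y\rangle\leq 0\}}$. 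Combining this with the identities $F^\cone\simeq F\comp[\V] C_\V$, $F^\wedge=F\comp[\V]\field_{\{\langle x,y\rangle\leq 0\}}$, ${}^\cone F\simeq[C_\V,F]\low\V$, $F^\vee=[\field_{\{\langle x,y\rangle\leq 0\}},F]\low\V$, and using the adjunction formula \eqref{eq:compadj} in the second case, I obtain $F^{\cone\wedge}\simeq F^\wedge$ and $({}^\cone F)^\vee\simeq F^\vee$.

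Secondly I would record that for a conic sheaf $G$ one has $G^{\wedge\wedge}\simeq G^a[-n]$ and $G^{\vee\vee}\simeq G^a[n]$. For the first: applying \eqref{eq:wedgevee} (with the roles of $\V$ and $\W$ interchanged) to the conic sheaf $G^\wedge$ gives $(G^\wedge)^\vee\simeq G^{\wedge\wedge a}[n]$, while Theorem~\ref{thm:F} gives $(G^\wedge)^\vee\simeq G$; hence $G\simeq G^{\wedge\wedge a}[n]$, and applying $(\cdot)^a[-n]$ yields $G^{\wedge\wedge}\simeq G^a[-n]$. The second is obtained symmetrically, from \eqref{eq:wedgevee} applied to $G^\vee$ together with $(G^\vee)^\wedge\simeq G$.

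Finally I would combine the two steps. Since $F^\cone$ is conic (Proposition~\ref{pro:con}(ii)), the second step gives $(F^\cone)^{\wedge\wedge}\simeq(F^\cone)^a[-n]$; by the first step the left-hand side is $F^{\wedge\wedge}$, so $F^{\wedge\wedge a}[n]\simeq F^\cone$. Likewise ${}^\cone F$ is conic, so $({}^\cone F)^{\vee\vee}\simeq({}^\cone F)^a[n]$, which combined with $({}^\cone F)^\vee\simeq F^\vee$ from the first step gives $F^{\vee\vee a}[-n]\simeq{}^\cone F$. I expect the only delicate point to be the bookkeeping in the first step --- verifying that composition with $C_\V$ or $C_\W$ along one factor of a kernel on $\V\times\W$ really is the conification in that variable, and keeping track of the vector space on which each antipodal and each shift lives; the rest is just the associativity of the kernel calculus combined with Theorem~\ref{thm:F} and \eqref{eq:wedgevee}.
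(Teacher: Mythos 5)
Your proof is correct, but it reverses the logical order of the paper's argument and replaces its key step by a different mechanism. The paper first proves $F^\cone \simeq F^{\wedge\wedge a}[n]$ abstractly: since $H\simeq H^{\vee\vee a}[-n]$ for conic $H$, double adjunction gives $\Hom(F,H)\simeq \Hom(F^{\wedge\wedge a}[n],H)$, and uniqueness of the left adjoint to $\BDC_{\R^+}(\field_\V)\to\BDC(\field_\V)$ (Proposition~\ref{pro:con}~(iv)) identifies $F^{\wedge\wedge a}[n]$ with $F^\cone$; the identity $F^{\cone\wedge}\simeq F^\wedge$ is then a corollary, via $F^{\cone\wedge}\simeq F^{\wedge\wedge a\wedge}[n]\simeq F^{\wedge\vee\wedge}\simeq F^\wedge$. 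You instead prove $F^{\cone\wedge}\simeq F^\wedge$ first, by the concrete observation that the kernel $\field_{\{\langle x,y\rangle\leq 0\}}$ is conic in each variable separately and therefore absorbs $C_\V$ (resp.\ $C_\W$) under kernel composition, and only then recover $F^\cone\simeq F^{\wedge\wedge a}[n]$ from Theorem~\ref{thm:F} and \eqref{eq:wedgevee}. Both routes are valid and of comparable length; the paper's avoids the one genuinely non-formal verification in yours, namely that composing with $C_\V$ along one factor of $\V\times\W$ computes the conification in that variable (which reduces to identifying $C_{\V\times\W}$, for the action on the first factor alone, with $C_\V\etens\field_{\Delta_\W}$ up to reordering of factors, and then invoking Proposition~\ref{pro:con}~(i) and (iii)) — you correctly single this out as the delicate point, and your sketch of it is adequate. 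What your route buys is a more transparent explanation of why the Fourier--Sato transform factors through conic sheaves: the kernel $\field_{\{\langle x,y\rangle\leq 0\}}$ already contains the conification kernel.
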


\begin{proof}
Since the arguments are similar, we will only discuss the first two isomorphisms.

  For $H\in\BDC_{\R^+}(\field_\V)$ one has $H\simeq
  H^{\vee\wedge}\simeq H^{\vee\vee a}[-n]$.  Hence there are
  isomorphisms
  \begin{align*}
    \Hom[\BDC(\field_\V)](F,H)
    &\simeq \Hom[\BDC(\field_\V)](F,H^{\vee\vee a}[-n]) \\
    &\simeq \Hom[\BDC(\field_\V)](F^{\wedge\wedge a}[n],H) \\
    &\simeq \Hom[\BDC_{\R^+}(\field_\V)](F^{\wedge\wedge a}[n],H) .
  \end{align*}
  By Proposition~\ref{pro:con}~(iv) and by uniqueness of the left
  adjoint, it follows that $F^\cone \simeq
  F^{\wedge\wedge a}[n]$. One then has $F^{\cone\wedge} \simeq
  F^{\wedge\wedge a \wedge}[n] \simeq F^{\wedge\vee\wedge} \simeq
  F^\wedge$.
\end{proof}

Consider the dual vector spaces
\[
\Vt = \V\times\R, \qquad \Wt = \W \times \R
\]
by the pairing $\langle(x,s),(y,t)\rangle = \langle x,y\rangle +
st$.

\begin{notation}
\label{not:tam} 
\begin{itemize}
\item[(i)]
Denote by $\BDC_{*\{t\geq 0\}}(\field_{\Wt})$ the full triangulated subcategory
of $\BDC(\field_{\Wt})$ whose objects $G$ satisfy
$G \conv 
\field_{\{t\geq 0,\,y=0\}} \isoto G$, or equivalently 
$G\conv\field_{\{t> 0,\,y=0\}} = 0$. 
\item[(ii)]
Denote by $\BDC_{\{t\geq 0\}^*}(\field_{\Wt})$ the full triangulated subcategory
of $\BDC(\field_{\Wt})$ whose objects $G$ satisfy
$G \isoto
\hom^*(\field_{\{t\geq 0,\,y=0\}},G)$, or equivalently 
$\hom^*(\field_{\{t> 0,\,y=0\}},G) = 0$. 
\end{itemize}
\end{notation}

Let us identify $\V$ with an $\R^+$-simple subset of $\Vt$ by the embedding
\[
i\colon \V \to \Vt, \quad x \mapsto (x,-1).
\]

\begin{theorem}
\label{th:FSnh}
  There are equivalences
  \begin{align*}
  \BDC(\field_\V) &\isoto \BDC_{\R^+,*\{t\geq0\}}(\field_{\Wt}),  & F&\mapsto
  (\reim i F)^\wedge,\\
  \BDC(\field_\V) &\isoto \BDC_{\R^+,\{t\geq0\}^*}(\field_{\Wt}),  & F&\mapsto
  (\roim i F)^\vee,
  \end{align*}
with quasi inverses $G\mapsto \opb i (G^\vee)[-1]$ and $G\mapsto \epb i (G^\wedge)[1]$, respectively.
\end{theorem}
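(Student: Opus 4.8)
The plan is to reduce the statement to the already-established machinery of Propositions~\ref{pro:con} and~\ref{pro:icone} combined with the Fourier-Sato equivalence of Theorem~\ref{thm:F}. The key observation is that, up to the Fourier-Sato transform, the functor $F\mapsto(\reim i F)^\wedge$ is (a shift of) the conification of the direct image by the $\R^+$-simple embedding $i\colon\V\to\Vt$, and Proposition~\ref{pro:icone} already identifies such a functor as an equivalence onto $\BDC_{\R^+,\langle\R^+\V\rangle}(\field_{\Vt})$. So the first step is to set up the square of functors: the embedding $i$ sends $\V$ to $\Vt=\V\times\R$ with second coordinate $-1$, hence (applying Lemma~\ref{lem:iAF}-type reasoning, or rather the general Proposition~\ref{pro:icone}) $(\reim i(\cdot))^\cone$ is an equivalence $\BDC(\field_\V)\isoto\BDC_{\R^+,\langle\R^+\V\rangle}(\field_{\Vt})$ with quasi-inverse $\opb i[-1]$, where $\R^+\V=\{(x,s)\colon s<0\}\subset\Vt$ is the open lower half-space.

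First I would rewrite $(\reim i F)^\wedge$ using the identity $F^\wedge\simeq F^{\cone\wedge}$ from the first lemma of Section~\ref{se:iF}: since $(\reim i F)^\wedge\simeq((\reim i F)^\cone)^\wedge$, the functor $F\mapsto(\reim i F)^\wedge$ factors as the equivalence $F\mapsto(\reim i F)^\cone$ of Proposition~\ref{pro:icone} followed by the Fourier-Sato equivalence $\BDC_{\R^+}(\field_{\Vt})\isoto\BDC_{\R^+}(\field_{\Wt})$ of Theorem~\ref{thm:F}. Composing quasi-inverses, the quasi-inverse of $F\mapsto(\reim i F)^\wedge$ is $G\mapsto\opb i(G^\vee)[-1]$, which is exactly the formula in the statement. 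So the only substantive thing left to check is that the Fourier-Sato transform carries the subcategory $\BDC_{\R^+,\langle\R^+\V\rangle}(\field_{\Vt})$ onto $\BDC_{\R^+,*\{t\geq0\}}(\field_{\Wt})$; the $\R^+$-conicity is automatic on both sides.

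The main obstacle, then, is the identification of the essential images. I would compute the Fourier-Sato transform of $\field_{\R^+\V}=\field_{\{s<0\}}$: writing $\{s<0\}$ as $\V\times\R_{<0}$ and using $(\field_{\V}\etens\field_{\R_{<0}})^\wedge\simeq\field_\V^\wedge\etens\field_{\R_{<0}}^\wedge$ via \eqref{eq:Fouetens}, together with Lemma~\ref{lem:Fex} applied to the open cone $\R_{<0}\subset\R$, one finds that $\field_{\{s<0\}}^\wedge\simeq\field_{\{t\geq0,\,y=0\}}$ up to a shift. Now $F\in\BDC_{\R^+}(\field_{\Vt})$ satisfies $F_{\Vt\setminus\R^+\V}=0$, i.e. $F\simeq F_{\{s<0\}}\simeq F\tens\field_{\{s<0\}}$; applying Fourier-Sato and the convolution formula \eqref{eq:convF}, this translates (up to shift) precisely into $G\simeq G\conv\field_{\{t\geq0,\,y=0\}}$, which is the defining condition of $\BDC_{*\{t\geq0\}}(\field_{\Wt})$ in Notation~\ref{not:tam}~(i). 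Conversely any $G$ satisfying that condition comes from such an $F$ since Fourier-Sato is an equivalence. The second equivalence in the statement, involving $\roim i$, $(\cdot)^\vee$, and the subcategory $\BDC_{\{t\geq0\}^*}$, is obtained in the same way from the second equivalence of Proposition~\ref{pro:icone} together with the adjunction identity \eqref{eq:dconvF} relating $\conv$ and $\hom^*$ under Fourier-Sato; alternatively it follows from the first by the duality $(\cdot)^\vee\simeq(\cdot)^{\wedge a}[n]$ of \eqref{eq:wedgevee} and a bookkeeping of the antipodal map and shifts. The delicate points are purely the tracking of shifts (the $[1]$ and $[-1]$ in the quasi-inverses, and the $[-n]$ appearing in Lemma~\ref{lem:Fex}~(ii)) and making sure the "or equivalently" reformulations in Notation~\ref{not:tam} are used consistently; I expect no conceptual difficulty beyond that.
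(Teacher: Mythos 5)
Your proposal is correct and follows essentially the same route as the paper: factor $F\mapsto(\reim i F)^\wedge$ through $(\reim i F)^\cone$ via Proposition~\ref{pro:icone} and the identity $(\cdot)^\wedge\simeq(\cdot)^{\cone\wedge}$, then match the essential images under the Fourier--Sato equivalence by transforming the cutoff sheaf on the half-space $\{s<0\}$ (the paper works with the complementary vanishing condition $H\tens\field_{\{s\geq0\}}=0$ and $\field_{\{s\geq0\}}^\wedge\simeq\field_{\{t>0,\,y=0\}}[-n]$, which is equivalent to your isomorphism formulation by Notation~\ref{not:tam}). The shift bookkeeping and the treatment of the second equivalence by adjunction/duality are also as in the paper.
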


The category $\BDC_{*\{t\geq0\}}(\field_{\Wt})$ is of the kind of categories
discussed by Tamarkin in~\cite{Tam10}. In Appendix~\ref{se:Tam} we
show how the above functor $F\mapsto (\reim i F)^\wedge$ is expressed
in terms of the Fourier transform considered in~\cite{Tam10}.

\begin{proof}
As the proofs are similar, we will only discuss the first equivalence.

By Proposition~\ref{pro:icone}, there is an equivalence
  \[
  \BDC(\field_\V) \isoto \BDC_{\R^+,\langle\{s<0\}
    \rangle}(\field_{\Vt}),
\quad F\mapsto (\reim i F)^\cone,
  \]
with quasi-inverse $\opb i[-1]$.
Since $(\reim i F)^\wedge \simeq (\reim i F)^{\cone\wedge}$,
by Theorem~\ref{thm:F}
we are left to prove that the Fourier-Sato transform between $\Vt$ and
$\Wt$ induces an equivalence
\begin{equation}
\label{eq:st}
\BDC_{\R^+,\langle\{s<0\} \rangle}(\field_{\Vt}) \isoto \BDC_{\R^+,*\{t\geq0\}}(\field_{\Wt}).
\end{equation}
By \eqref{eq:Fouetens} and Lemma~\ref{lem:Fex}~(i) one has
\[
\field_{\{s \geq 0\}}^\wedge \simeq (\field_V\etens\field_{\{s \geq 0\}})^\wedge
\simeq \field_V^\wedge\etens\field_{\{s \geq 0\}}^\wedge
\simeq \field_{\{t> 0,\,y=0\}}[-n]. 
\]
Let $H\in \BDC_{\R^+}(\field_{\Vt})$. By \eqref{eq:convF}, one has
\[
(H\tens\field_{\{s\geq 0\}})^\wedge \simeq H^\wedge \conv \field_{\{t> 0,\,y=0\}}[1].
\]
Hence the conditions $H\tens\field_{\{s\geq 0\}} =0$ and $H^\wedge
\conv \field_{\{t> 0,\,y=0\}} = 0$ are equivalent.
\end{proof}

\begin{remark}
It follows from \eqref{eq:wedgevee} that 
\[
(\roim i F)^\vee \simeq (\reim i F)^{\wedge a}[n+1].
\]
Thus, Theorem~\ref{th:FSnh} implies that for $G\in\BDC_{\R^+}(\field_{\Wt})$
the two conditions
\[
G \conv 
\field_{\{t> 0,\,y=0\}} =0, \qquad 
\hom^*(\field_{\{t< 0,\,y=0\}},G) =0,
\] 
are equivalent.
\end{remark}

\begin{remark}
\label{rem:SS}
One can recast the equivalence \eqref{eq:st} in terms of the theory of
microsupport from~\cite{KS90}.  Recall that the microsupport of
$F\in\BDC(\field_\V)$ is a closed conic involutive subset
$SS(F)\subset T^*\V$ of the cotangent bundle.  For $A\subset T^*\V$,
denote by $\BDC_{\mu\,A}(\field_\V)$ the full subcategory of
$\BDC(\field_\V)$ whose objects $F$ satisfy $SS(F)\subset A$. Denote
by $\pi\colon T^*\V\to \V$ the projection. Since $\supp(F) =
\pi(SS(F))$, for $S\subset \V$ one has $\BDC_S(\field_\V) =
\BDC_{\mu\,\opb\pi(S)}(\field_\V)$.

\smallskip\noindent(i)
From the adjunction isomorphism
\[
\Hom(F\tens\field_{\{s\geq 0\}}, F') \simeq \Hom(F, \rsect_{\{s\geq 0\}}(F'))
\]
one deduces that $\BDC_{\R^+,\langle\{s<0\} \rangle}(\field_{\Vt})$ is
the left orthogonal to $\BDC_{\R^+,\mu\,\{s\geq 0\}}(\field_{\Vt})$.

\smallskip\noindent(ii) Note that, using $t\in\R$ as coordinate, the
associated symplectic coordinates in $T^*\R$ are $(t;s)$. By
Tamarkin~\cite{Tam10}, $\BDC_{\R^+,*\{t\geq0\}}(\field_{\Wt})$ is the
left orthogonal to $\BDC_{\R^+,\mu\,\{s\leq 0\}}(\field_{\Wt})$.

\smallskip\noindent(iii)
The equivalence \eqref{eq:st} then follows from \cite[Theorem 5.5.5]{KS90}.
\end{remark}

\begin{lemma}
  \label{lem:FiF}
  \begin{itemize}
  \item[(i)] Consider the subset $\{\langle x,y
      \rangle \leq t\}\subset\V\times\Wt$.
For $F\in\BDC(\field_\V)$ one has
    \[
    (\reim i F)^\wedge \simeq F \comp[\V] \field_{\{\langle x,y
      \rangle \leq t\}}.
    \]
    In particular, $F^\wedge \simeq (\reim i
    F)^\wedge|_{\W\times\{0\}}$ and $(\reim i
    F)^\wedge|_{\{y=0,\,t<0\}} = 0$.
  \item[(ii)] For $F\in\BDC_{\R^+}(\field_\V)$ one has
    \[
    (\reim i F)^\wedge \simeq F^\wedge\etens\field_{\{t\geq 0\}}.
    \]
  \end{itemize}
\end{lemma}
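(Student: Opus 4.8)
The plan is to compute everything in terms of kernel calculus, using that the Fourier-Sato transform is itself a composition with the kernel $\field_{\{\langle x,y\rangle\leq 0\}}$ and that $\reim i$ is composition with $\field_{\Gamma_i}$. For part~(i), I would start from
\[
(\reim i F)^\wedge \simeq (\reim i F)\comp[\Vt]\field_{\{\langle(x,s),(y,t)\rangle\leq 0\}}
\simeq F\comp[\V]\field_{\Gamma_i}\comp[\Vt]\field_{\{\langle x,y\rangle + st\leq 0\}},
\]
and then simply identify the composite kernel $\field_{\Gamma_i}\comp[\Vt]\field_{\{\langle x,y\rangle+st\leq 0\}}$ in $\BDC(\field_{\V\times\Wt})$. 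Since $\Gamma_i = \{(x,(x',s'))\colon x'=x,\ s'=-1\}$, composing over $\Vt$ forces $s'=-1$, so $\langle x,y\rangle + s't = \langle x,y\rangle - t$, and the composite is $\field_{\{\langle x,y\rangle\leq t\}}$ — this is a routine unwinding of \eqref{eq:comp1} where the relevant projection is a homeomorphism on the incidence set, so no higher cohomology appears. The two ``in particular'' assertions then follow by restriction: restricting $\{\langle x,y\rangle\leq t\}$ to $t=0$ gives back the Fourier-Sato kernel $\{\langle x,y\rangle\leq 0\}$, hence $F^\wedge$; and restricting to $\{y=0,\ t<0\}$ gives the empty set (the condition $0\leq t<0$ is never satisfied), hence $0$.

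For part~(ii), the cleanest route is to invoke Lemma~\ref{lem:iFhom}: with $Y=\V$, $Z=\R$, $z_\circ=-1$, and noting $\R^+z_\circ = \R^+\cdot(-1) = \{s<0\}$, one has, for $F$ conic, $(\reim i F)^\cone \simeq F\etens\field_{\{s<0\}}[1]$. Applying $(\cdot)^\wedge$ and using $(\reim i F)^\wedge \simeq (\reim i F)^{\cone\wedge}$ together with the multiplicativity \eqref{eq:Fouetens} of the Fourier-Sato transform for exterior products, one gets
\[
(\reim i F)^\wedge \simeq \bigl(F\etens\field_{\{s<0\}}\bigr)^\wedge[1]
\simeq F^\wedge \etens \field_{\{s<0\}}^\wedge[1].
\]
It then remains to compute $\field_{\{s<0\}}^\wedge$ on the $\R$-factor: by Lemma~\ref{lem:Fex}~(ii) applied to the open convex cone $\{s<0\}\subset\R$, whose polar is $\{t\geq 0\}$ and whose antipodal-polar is $\{t\leq 0\}$, wait — more carefully, $\{s<0\}^\circ = \{t\in\R\colon st\geq 0\ \forall s<0\} = \{t\leq 0\}$, so $\{s<0\}^{\circ a} = \{t\geq 0\}$ and Lemma~\ref{lem:Fex}~(ii) gives $\field_{\{s<0\}}^\wedge \simeq \field_{\{t\geq 0\}}[-1]$; the shift $[-1]$ cancels the $[1]$ above, yielding $(\reim i F)^\wedge \simeq F^\wedge\etens\field_{\{t\geq 0\}}$. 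Alternatively, part~(ii) can be derived directly from part~(i): for conic $F$, $F\comp[\V]\field_{\{\langle x,y\rangle\leq t\}}$ is manifestly conic on $\Wt$ for the diagonal action, and its restriction to $t=0$ is $F^\wedge$ while its support in the $t$-direction is $\{t\geq 0\}$ (because for conic $F$ the convolution kernel is invariant under scaling $(y,t)\mapsto(\lambda y,\lambda t)$ and one reduces to $\{t=0\}$ plus an open half-line), so by conicity it is determined by its restriction to $\{t\geq 0\}$ and equals $F^\wedge\etens\field_{\{t\geq 0\}}$.

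The only genuinely delicate point is the kernel identification in part~(i): one must check that $\reim{q_{13}}$ of the relevant tensor product really is concentrated in degree zero and equals the constant sheaf on $\{\langle x,y\rangle\leq t\}$, i.e.\ that the fibers of the projection restricted to the incidence variety are points (or at least contractible with the right orientation). Since the incidence set inside $\V\times\Vt\times\Wt$ is cut out by $x'=x$, $s'=-1$, this projection to $\V\times\Wt$ is in fact a homeomorphism onto $\{\langle x,y\rangle\leq t\}$, so $\reim{q_{13}}$ of a constant sheaf is again constant with no shift — this makes the step clean. Everything else is bookkeeping with the identities recalled in Section~\ref{se:ker} and the elementary Fourier-Sato computations of Section~\ref{se:FS}.
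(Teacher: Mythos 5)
Your proposal is correct and follows essentially the same route as the paper: part~(i) is reduced to the kernel identity $\field_{\Gamma_i}\comp[\Vt]\field_{\{\langle x,y\rangle+st\leq0\}}\simeq\field_{\{\langle x,y\rangle\leq t\}}$ (which you rightly justify by noting the relevant projection is a homeomorphism on the incidence set, plus proper base change for the ``in particular'' claims), and part~(ii) uses $(\reim i F)^\wedge\simeq(\reim i F)^{\cone\wedge}$, Lemma~\ref{lem:iFhom} giving $(\reim i F)^\cone\simeq F\etens\field_{\{s<0\}}[1]$, and then \eqref{eq:Fouetens} together with Lemma~\ref{lem:Fex}~(ii), exactly as in the paper. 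Your polar computation $\{s<0\}^{\circ a}=\{t\geq0\}$ and the cancellation of shifts are correct; the alternative sketch of (ii) via conicity of the kernel is dispensable and less rigorous, but the main argument stands.
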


\begin{proof}
  (i) is implied by the isomorphism
\[
\field_{\Gamma_i} \comp[\Vt] \field_{\{\langle x,y
  \rangle+st\leq0\}}\simeq\field_{\{\langle x,y \rangle\leq t\}}.
\]

  \medskip\noindent(ii) Recall that $(\reim i F)^\wedge \simeq (\reim i F)^{\cone\wedge}$.
By Lemma~\ref{lem:iFhom} one has
  \[
  (\reim i F)^\cone \simeq F \etens \field_{s<0}[1].
  \]
Taking the Fourier-Sato transform, the statement follows by
\eqref{eq:Fouetens} and Lemma~\ref{lem:Fex}~(ii).
\end{proof}

\medskip

We end this section by computing the non homogeneous Fourier transform
of $F=\field_A$ for some classes of locally closed subsets
$A\subset\V$.  Note that, by Lemma~\ref{lem:iAF}, one has
\begin{equation}
\label{eq:iAF}
(\reim i\field_A)^\cone \simeq \field_{\gamma_A}[1],
\end{equation}
where we denote by
\[
\gamma_A = \R^+(i(A)) \subset \Vt
\]
the cone generated by $i(A)$.

Let us first consider the case where $A$ is a nonempty, closed, convex
subset.  (For the notions of support function and asymptotic cone that
we now recall, see for example \cite{AT03}.)

The asymptotic cone of $A$ is defined by
\begin{align*}
\lambda_A &= \{x\in\V\colon a+\R^+x \subset A, \ \exists a\in A\} \\
&= \{x\in\V\colon a+\R^+x \subset A, \ \forall a\in A\}.
\end{align*}
It is the set of directions in which $A$ is infinite.
Under the identification $\V = \V\times\{0\}\subset \Vt$, one has
\begin{equation}
\label{eq:lambdaA}
\lambda_A =
\overline{\gamma_A} \cap (\V\times\{0\}),
\end{equation}
or equivalently $\overline{\gamma_A} = \gamma_A\sqcup\lambda_A$.

\begin{lemma}
The cone $\overline{\gamma_A}$ is proper if and only if $A$ contains no affine line.
\end{lemma}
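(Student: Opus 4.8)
The plan is to translate the geometric statement into the language of cones in $\Vt = \V\times\R$ and to use the characterization \eqref{eq:lambdaA} relating $\overline{\gamma_A}$ to the asymptotic cone $\lambda_A$. Recall that $\overline{\gamma_A} = \gamma_A \sqcup \lambda_A$, where $\gamma_A = \R^+(i(A))$ and $i(x) = (x,-1)$. So $\overline{\gamma_A}$ sits in the closed half-space $\{s \le 0\}$, and its only points with $s = 0$ are those of $\lambda_A \subset \V\times\{0\}$, while every point with $s < 0$ is a strictly positive multiple of some $(a,-1)$ with $a\in A$.

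First I would prove the easy direction: if $\overline{\gamma_A}$ contains no line, then $A$ contains no affine line. Suppose $A$ contains the affine line $\{a + \R x : x'\in\R\}$ wait — $\{a + \mathbb{R}\,v\}$ for some $a\in A$ and $v\neq 0$. Then $\pm v \in \lambda_A$ by definition of the asymptotic cone, so $(v,0)$ and $(-v,0)$ both lie in $\lambda_A \subset \overline{\gamma_A}$; since $\overline{\gamma_A}$ is a cone it then contains the whole line $\R(v,0)$, a contradiction. Hence this direction is immediate.

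For the converse I would argue contrapositively: assume $\overline{\gamma_A}$ contains a line $\R\, w$ with $w = (v,\sigma)\in\Vt\setminus\{0\}$. Since $\overline{\gamma_A}\subset\{s\le 0\}$ and it contains both $w$ and $-w$, we must have $\sigma = 0$, so $w = (v,0)$ with $v\neq 0$ and $(v,0)\in\lambda_A$ by \eqref{eq:lambdaA}. By the definition of the asymptotic cone, $\R^+ v$ translates $A$ into itself from any base point, and likewise $\R^+(-v)$ does (since $-v\in\lambda_A$ as well, $\lambda_A$ being a cone containing $\pm v$); combining, for any $a\in A$ one has $a + \R v\subset A$, i.e.\ $A$ contains an affine line. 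This is the content of the statement, modulo checking the standard fact (from \cite{AT03}) that $v,-v\in\lambda_A$ together force the two-sided translate $a+\R v$ to lie in $A$ for convex $A$ — which follows since $A = a + \lambda_A$ is stable under adding elements of $\lambda_A$, and $\lambda_A$ being a convex cone containing $\pm v$ contains the subspace $\R v$.

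The only point requiring a little care — the ``main obstacle,'' though it is minor — is the passage through \eqref{eq:lambdaA}: one must be sure that a line in $\overline{\gamma_A}$ really is forced to be horizontal, which uses that $\gamma_A$ and hence its closure lie in $\{s\le 0\}$ (because $i(A)\subset\{s=-1\}$), and then that the horizontal slice of $\overline{\gamma_A}$ is exactly $\lambda_A$. Given \eqref{eq:lambdaA} this is automatic, so the whole proof is a short unwinding of definitions once one has the identification of $\lambda_A$ with the horizontal part of $\overline{\gamma_A}$.
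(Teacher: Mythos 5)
Your argument is correct and is essentially the paper's own proof, which likewise observes that $\overline{\gamma_A}\subset\{s\le 0\}$ forces any line in $\overline{\gamma_A}$ to be horizontal, hence contained in $\lambda_A$ by \eqref{eq:lambdaA}, and then unwinds the definition of the asymptotic cone. (One cosmetic slip: ``$A=a+\lambda_A$'' should read $a+\lambda_A\subset A$, which is all you actually use.)
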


\begin{proof}
It follows from \eqref{eq:lambdaA} and the definition of $\lambda_A$, by noticing that $\overline{\gamma_A} \subset \{s\leq 0\}$.
\end{proof}

The support function of $A$ is defined by
\[
\suppfct_A \colon \W \to[] \R\cup\{+\infty\}, \quad y \mapsto \sup_{x\in A}\langle x,y \rangle.
\]
It describes the signed distance from the origin of the supporting hyperplanes of $A$.
Recall that $\suppfct_A$ is positive homogeneous, lower semicontinuous
and convex. Moreover, its effective domain (that is, the set of
$y\in\W$ such that $\suppfct_A(y) < +\infty$) is $\lambda_A^{\circ a}$.

\begin{lemma}
  \label{lem:gammaHK}
  One has
  \[
  \gamma_A^\circ = \{(y,t)\in\Wt\colon y\in \lambda_A^\circ,\ t \leq
  -\suppfct_A(-y)\}.
  \]
\end{lemma}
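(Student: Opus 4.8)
The plan is to compute $\gamma_A^\circ$ directly from the definition of polar cone together with the description $\overline{\gamma_A} = \gamma_A \sqcup \lambda_A$ and the fact that the polar cone depends only on the closed convex hull. Recall $\gamma_A = \R^+(i(A))$ where $i(x) = (x,-1)$, so a typical point of $\gamma_A$ has the form $\lambda(a,-1) = (\lambda a, -\lambda)$ for $\lambda > 0$ and $a \in A$. Thus $(y,t) \in \gamma_A^\circ$ iff $\langle \lambda a, y\rangle - \lambda t \geq 0$ for all $\lambda > 0$, $a \in A$, i.e.\ iff $\langle a, y\rangle \geq t$ for all $a \in A$, i.e.\ iff $\suppfct_A(-y)$... wait, one must be careful with signs: $\langle a,y\rangle \geq t$ for all $a\in A$ means $\inf_{a\in A}\langle a,y\rangle \geq t$, and $\inf_{a\in A}\langle a,y\rangle = -\sup_{a\in A}\langle a,-y\rangle = -\suppfct_A(-y)$. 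So this first computation already yields $t \leq -\suppfct_A(-y)$, with the tacit understanding that when $\suppfct_A(-y) = +\infty$ no such $t$ exists.

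First I would carry out exactly the above elementary computation to get
\[
\gamma_A^\circ = \{(y,t)\in\Wt\colon t \leq -\suppfct_A(-y)\},
\]
where the right-hand side is empty in the $(y,t)$-slice whenever $-y \notin \lambda_A^{\circ a}$, i.e.\ whenever $y \notin \lambda_A^\circ$. The second step is then purely bookkeeping: I must check that the condition ``$t \leq -\suppfct_A(-y)$ for some (hence for the supremal) $t$'' is equivalent to the conjunction ``$y \in \lambda_A^\circ$ and $t \leq -\suppfct_A(-y)$'' as written in the statement. This is immediate from the recalled fact that the effective domain of $\suppfct_A$ is $\lambda_A^{\circ a}$: indeed $\suppfct_A(-y) < +\infty \iff -y \in \lambda_A^{\circ a} \iff y \in \lambda_A^\circ$, so the extra clause $y \in \lambda_A^\circ$ is exactly what makes the inequality $t \leq -\suppfct_A(-y)$ meaningful (finite right-hand side) rather than vacuous.

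A cleaner packaging, which I would probably use instead to avoid fussing over the $+\infty$ convention, is to pass through $\overline{\gamma_A}$. Since $\gamma_A^\circ = (\overline{\gamma_A})^\circ$ and $\overline{\gamma_A} = \gamma_A \sqcup \lambda_A$ by \eqref{eq:lambdaA}, a point $(y,t)$ lies in $\gamma_A^\circ$ iff it pairs nonnegatively with every element of $\gamma_A$ \emph{and} with every element of $\lambda_A \subset \V \times \{0\}$. Pairing nonnegatively with $\lambda_A \times \{0\}$ says precisely $y \in \lambda_A^\circ$; pairing nonnegatively with $\gamma_A$ says, by the computation above, $t \leq -\suppfct_A(-y)$ (which on the domain $y \in \lambda_A^\circ$ is a genuine finite inequality). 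Combining the two gives the claimed formula.

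I do not expect any real obstacle here — the statement is essentially a translation exercise between the cone $\gamma_A$, its closure, the asymptotic cone, and the support function. The only point requiring a modicum of care is the treatment of $y \notin \lambda_A^\circ$, where $\suppfct_A(-y) = +\infty$ and both sides describe the empty slice; handling this via $\overline{\gamma_A} = \gamma_A \sqcup \lambda_A$ as above sidesteps the issue entirely, since the constraint from $\lambda_A$ cuts down to $y \in \lambda_A^\circ$ before the support function is ever evaluated.
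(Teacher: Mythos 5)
Your proof is correct and its core computation — unwinding the polar condition on points $\lambda(a,-1)$ of $\gamma_A$ to get $t\leq\inf_{a\in A}\langle a,y\rangle=-\suppfct_A(-y)$, then identifying the clause $y\in\lambda_A^\circ$ with finiteness of $\suppfct_A(-y)$ via the effective domain — is exactly the paper's argument. The alternative packaging through $\overline{\gamma_A}=\gamma_A\sqcup\lambda_A$ is a harmless cosmetic variant of the same computation.
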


\begin{proof}
  By definition,
  \[
  \gamma_A^\circ = \{(y,t)\in\Wt\colon t \leq \langle x,y \rangle,\
  \forall x\in A \}.
  \]
  It is then enough to note that $\inf_{x\in A}\langle x,y \rangle =
  -\suppfct_A(-y)$ and to recall that
  $-\suppfct_A(-y) = -\infty$ for $y\notin \lambda_A^\circ$.
\end{proof}

Consider the projection $q_1\colon \Wt = \W\times\R \to \W$.

\begin{lemma}\label{lem:conefou}
\begin{itemize}
\item[(i)]
Let $A\subset\V$ be a nonempty, closed, convex
    subset which contains no affine line. Then  
\[
    (\reim i\field_A)^\wedge \simeq \opb{q_1}\field_{\Int\lambda_A^\circ} \tens \field_{\{t \geq
  -\suppfct_A(-y)\}}.
\]
\item[(ii)]
Let $A\subset\V$ be an nonempty, open, convex
    subset. Then  
\[
    (\reim i\field_A)^\wedge \simeq
    \opb{q_1}\field_{\Int\lambda_A^{\circ a}} \tens \field_{\{t \geq
  \suppfct_A(y)\}}[-n].
\]
\end{itemize}
\end{lemma}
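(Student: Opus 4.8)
The plan is to deduce both statements from the computation of the conification $(\reim i \field_A)^\cone \simeq \field_{\gamma_A}[1]$ in \eqref{eq:iAF}, together with the relation $(\reim i F)^\wedge \simeq (\reim i F)^{\cone\wedge}$ and the explicit Fourier transforms of Lemma~\ref{lem:Fex}. So the first step is to identify the cone $\gamma_A \subset \Vt$, or rather its closure $\overline{\gamma_A}$, precisely enough to apply Lemma~\ref{lem:Fex}. For part (i), $A$ is closed convex containing no affine line, so by the Lemma preceding Lemma~\ref{lem:gammaHK} the cone $\overline{\gamma_A}$ is a proper closed convex cone containing the origin; hence $\field_{\overline{\gamma_A}}^\wedge \simeq \field_{\Int\overline{\gamma_A}^\circ}$ by Lemma~\ref{lem:Fex}~(i). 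For part (ii), $A$ is open convex, so $\gamma_A = \R^+(i(A))$ is an open convex cone in $\Vt$ (it does not contain the origin, but it is open), and $\field_{\gamma_A}^\wedge \simeq \field_{\gamma_A^{\circ a}}[-(n+1)]$ by Lemma~\ref{lem:Fex}~(ii); note $\dim\Vt = n+1$, which accounts for the shift $[-n]$ after the shift $[1]$ from \eqref{eq:iAF}.

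The second step is to rewrite the polar cones using Lemma~\ref{lem:gammaHK}, which already gives
\[
\gamma_A^\circ = \{(y,t)\in\Wt\colon y\in\lambda_A^\circ,\ t\leq -\suppfct_A(-y)\}.
\]
For part (i) I need $\Int\overline{\gamma_A}^\circ = \Int\gamma_A^\circ$ (polar of a cone equals polar of its closure), and I must check that the interior of the set described above is exactly $\{y\in\Int\lambda_A^\circ,\ t < -\suppfct_A(-y)\}$, i.e.\ that passing to the interior replaces $\lambda_A^\circ$ by its interior and the weak inequality by a strict one. This is where lower semicontinuity and convexity of $\suppfct_A$ enter: the function $y\mapsto -\suppfct_A(-y)$ is upper semicontinuous and concave with effective domain $\lambda_A^\circ$, so the hypograph's interior over the interior of the domain is the strict hypograph. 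Then I rewrite $\{(y,t)\colon y\in\Int\lambda_A^\circ,\ t < -\suppfct_A(-y)\}$ as $\opb{q_1}\field_{\Int\lambda_A^\circ}\tens\field_{\{t < -\suppfct_A(-y)\}}$ — but here I must be careful about the orientation of the inequality in the final statement, which reads $\{t\geq -\suppfct_A(-y)\}$; this sign flip comes from the antipodal map $a$ implicit in $(\cdot)^\wedge$ versus $(\cdot)^\vee$, or more directly because $F\mapsto F^\wedge$ involves the kernel $\field_{\{\langle x,y\rangle\leq 0\}}$ and so the Fourier transform of a ``downward'' cone is an ``upward'' set, so I should reconcile signs by a direct check that $\field_{\{s\leq 0\}}^\wedge$ sits in $\{t\geq 0\}$ as in the proof of Theorem~\ref{th:FSnh}.

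The third step handles part (ii) analogously: $\gamma_A^{\circ a} = -\gamma_A^\circ = \{(y,t)\colon -y\in\lambda_A^\circ,\ -t\leq -\suppfct_A(y)\} = \{y\in\lambda_A^{\circ a},\ t\geq \suppfct_A(y)\}$, and since $A$ is open the asymptotic cone $\lambda_A$ is open so $\lambda_A^{\circ a}$ should be replaced by its interior — actually for an open convex set one should verify directly that $\gamma_A$ open forces $\gamma_A^{\circ a}$ to be describable with $\Int\lambda_A^{\circ a}$ and a weak inequality $t\geq\suppfct_A(y)$, matching the stated formula; I would argue this by noting $\field_{\gamma_A}^\wedge$ is automatically conic and its support is the closure of $\gamma_A^{\circ a}$, and then pin down which boundary faces are included by a stalk computation at generic boundary points, or simply by applying part (i) to the complement / a limiting argument from closed sets. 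The main obstacle I anticipate is precisely this careful bookkeeping of which inequalities are strict versus weak and of the sign conventions (antipode, the kernel $\{\langle x,y\rangle\leq 0\}$, the $[1]$ from \eqref{eq:iAF} versus the $[-n]$ or $[-(n+1)]$ from Lemma~\ref{lem:Fex}): the cohomological shifts and the open/closed distinction must line up exactly, and getting $\opb{q_1}\field_{\Int\lambda_A^\circ}$ rather than $\opb{q_1}\field_{\lambda_A^\circ}$ requires genuinely using that $\suppfct_A$ is lower semicontinuous and that $A$ contains no affine line (so that $\lambda_A^\circ$ has nonempty interior).
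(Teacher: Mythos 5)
Your part (i) has a genuine gap. The cone $\gamma_A=\R^+ i(A)$ is only locally closed in $\Vt$: by \eqref{eq:lambdaA} one has $\overline{\gamma_A}=\gamma_A\sqcup\lambda_A$, and $\lambda_A\neq\emptyset$, so $\field_{\gamma_A}\neq\field_{\overline{\gamma_A}}$ and Lemma~\ref{lem:Fex}~(i) does not apply to the sheaf you actually need to transform. What your argument computes is $\field_{\overline{\gamma_A}}^\wedge[1]\simeq\field_{\Int\gamma_A^\circ}[1]$, which is supported on the \emph{open} set $\{y\in\Int\lambda_A^\circ,\ t<-\suppfct_A(-y)\}$ and sits in the wrong degree. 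You then try to reconcile the strict inequality $t<-\suppfct_A(-y)$ with the weak inequality $t\geq-\suppfct_A(-y)$ of the statement by invoking the antipodal map or the choice of kernel; this cannot work, because $\{t\geq-\suppfct_A(-y)\}$ is the \emph{complement} of $\{t<-\suppfct_A(-y)\}$, not its image under any sign convention. The missing idea is to encode the half-openness of $\gamma_A$ explicitly: write $\field_{\gamma_A}\simeq\field_{\overline{\gamma_A}}\tens\field_{\{s<0\}}$ and apply \eqref{eq:convF}, which yields
\[
(\reim i\field_A)^\wedge\simeq\field_{\Int\gamma_A^\circ}\conv\field_{\{t\geq0,\,y=0\}}[1].
\]
The convolution with $\field_{\{t\geq0,\,y=0\}}$ is precisely what converts $\{t<-\suppfct_A(-y)\}$ into $\{t\geq-\suppfct_A(-y)\}$ with a shift $[-1]$ (fibrewise one takes compactly supported cohomology of a half-line, which vanishes when the half-line is closed and contributes a degree $1$ class when it is open), and this $[-1]$ cancels the leftover $[1]$. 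Without this step both the support and the degree of your answer are wrong.

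For part (ii) your route is essentially the paper's: here $\gamma_A$ is genuinely an open convex cone, so Lemma~\ref{lem:Fex}~(ii) applies with $\dim\Vt=n+1$, the shifts $[1]$ and $[-(n+1)]$ combine to $[-n]$, and Lemma~\ref{lem:gammaHK} (applied to $\overline A$, using $\suppfct_A=\suppfct_{\overline A}$) identifies $\gamma_A^{\circ a}$. Your proposal to settle the $\lambda_A^{\circ a}$ versus $\Int\lambda_A^{\circ a}$ issue by ``a limiting argument from closed sets'' is left vague, but that is boundary bookkeeping; the structural failure is in part (i).
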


\begin{proof}
  (i) Note that $\overline{\gamma_A}$ is a proper
  closed convex cone containing the origin and $\gamma_A = \overline{\gamma_A} \cap \{s<0\}$.  
By \eqref{eq:iAF}
  and Lemma~\ref{lem:Fex}~(i), we have
\[
(\reim i\field_A)^\wedge \simeq \field_{\gamma_A}^\wedge[1] \simeq
(\field_{\overline{\gamma_A}} \tens \field_{\{s<0\}})^\wedge[1]
\simeq \field_{\Int \gamma_A^\circ} \conv \field_{\{t\geq0,\,y=0\}}[1].
\]
By Lemma~\ref{lem:gammaHK},
\[
\Int \gamma_A^\circ = \{(y,t)\in\Wt\colon y\in \Int\lambda_A^\circ,\ t <
  -\suppfct_A(-y)\}.
\]
Then one has
\[
\field_{\Int \gamma_A^\circ} \conv \field_{\{t\geq0,\,y=0\}} \simeq 
\field_{\{y\in \Int\lambda_A^\circ,\ t \geq
  -\suppfct_A(-y)\}}[-1].
\]

\medskip\noindent (ii)
By \eqref{eq:iAF}, Lemma~\ref{lem:Fex} and \eqref{eq:convF}, we have
\[
(\reim i\field_A)^\wedge \simeq \field_{\gamma_A}^\wedge[1] \simeq
\field_{\Int \gamma_A^{\circ a}}[-n],
\]
and one concludes by Lemma~\ref{lem:gammaHK}.
\end{proof}

Let us now treat a non convex case.  We consider the geometric
situation of Lemma~\ref{lem:qcone}, so that $(x',x'',x''')$ is the
coordinate system on $\V = \R^p \times \R^q \times \R^r$, and
$(y',y'',y''')$ is the dual coordinate system on $\W$.

\begin{lemma}
\label{le:quadric}
For $c \geq 0$, consider the quadric
\[
A = \{(x',x'')\in\V\colon x^{\prime 2}-x^{\prime\prime 2} \leq c^2,\ x'''=0\}
\]
and set
\[
g(y) = 
\begin{cases}
c \sqrt{y^{\prime 2}-y^{\prime\prime 2}}, &\text{for }y^{\prime 2}-y^{\prime\prime 2} \geq 0,\\
0,&\text{else}.
\end{cases}
\]
Then
\[
 (\reim i\field_A)^\wedge \simeq \opb{q_1} \field_{\{y^{\prime
     2}-y^{\prime\prime 2} \geq 0\}} \tens \field_{\{t \geq -g(y)\}}[-q].
\]
\end{lemma}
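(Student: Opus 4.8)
The plan is to reduce the computation to the convex case already handled in Lemma~\ref{lem:conefou}~(i) by decomposing the quadric $A$ as a union of half-spaces, or rather by writing $\field_A$ in terms of the constant sheaves on a family of convex sets whose Fourier transforms are understood. First I would observe that $A = \{x^{\prime 2}-x^{\prime\prime 2}\leq c^2,\ x'''=0\}$ is the preimage under the projection $\V\to\R^p\times\R^q$ of the set $\{x^{\prime 2}-x^{\prime\prime 2}\leq c^2\}$, exactly as in the proof of Lemma~\ref{lem:qcone}; using the transpose relation \eqref{eq:Foueim} together with the compatibility of $(\reim i(\cdot))^\wedge$ with pullback/pushforward along the extra-variable-preserving maps, this reduces us to the case $r=0$. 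So from now on $\V = \R^p\times\R^q$ and $A = \{x^{\prime 2}-x^{\prime\prime 2}\leq c^2\}$.

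Next I would compute the cone $\gamma_A = \R^+ i(A)\subset\Vt = \V\times\R$ generated by $i(A) = A\times\{-1\}$. A point $(x',x'',s)$ with $s<0$ lies in $\gamma_A$ iff $(-x'/s,-x''/s)\in A$, i.e.\ iff $x^{\prime 2}-x^{\prime\prime 2}\leq c^2 s^2$; so $\gamma_A = \{x^{\prime 2}-x^{\prime\prime 2}\leq c^2 s^2,\ s<0\}$, a solid quadric cone in $\Vt$, and $\overline{\gamma_A} = \{x^{\prime 2}-x^{\prime\prime 2}\leq c^2 s^2,\ s\leq 0\}$. This $\overline{\gamma_A}$ is a closed convex cone (for $c\leq$ something? — actually the region $x^{\prime 2}-x^{\prime\prime 2}\le c^2 s^2$ need \emph{not} be convex when $q\ge 1$ and $c>0$; the correct statement is that when $c>0$ it is the cone over the quadric, which is not convex). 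Here is the point where I expect the main obstacle: unlike Lemma~\ref{lem:conefou}, the set $A$ is not convex when $q\geq 1$, so I cannot invoke Lemma~\ref{lem:Fex} directly on $\field_{\overline{\gamma_A}}$. Instead I would use \eqref{eq:iAF}, which gives $(\reim i\field_A)^\wedge\simeq \field_{\gamma_A}^\wedge[1]$, and compute $\field_{\gamma_A}^\wedge$ directly by the method of Lemma~\ref{lem:qcone}, i.e.\ by a linear change of coordinates reducing the quadratic form or by the explicit integral/stratification computation underlying \cite[Lemma~6.2.1]{KS97}. Concretely, I would write $\gamma_A = \overline{\gamma_A}\cap\{s<0\}$ and, since $\field_{\overline{\gamma_A}}^\wedge$ can be computed as the Fourier transform of the constant sheaf on a homogeneous quadratic cone (a deformation of the $c=0$ case of Lemma~\ref{lem:qcone}), combine with $(\field_{\overline{\gamma_A}}\tens\field_{\{s<0\}})^\wedge \simeq \field_{\overline{\gamma_A}}^\wedge\conv\field_{\{t\geq 0,\ y=0\}}[1]$ via \eqref{eq:convF}.

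To carry this out cleanly I would compute the polar cone $\gamma_A^\circ\subset\Wt$ using Lemma~\ref{lem:gammaHK}: the asymptotic cone of $A=\{x^{\prime 2}-x^{\prime\prime 2}\leq c^2\}$ is $\lambda_A = \{x^{\prime 2}-x^{\prime\prime 2}\leq 0\}$ (the recession directions), so $\lambda_A^\circ = \{y : \langle x,y\rangle\geq 0\ \forall x\in\lambda_A\}$; and the support function is $\suppfct_A(y) = \sup_{x^{\prime 2}-x^{\prime\prime 2}\leq c^2}\langle x,y\rangle$. A Lagrange-multiplier computation gives $\suppfct_A(y) = c\sqrt{y^{\prime 2}-y^{\prime\prime 2}}$ when $y^{\prime 2}-y^{\prime\prime 2}\geq 0$ and $y''=0$... — more carefully, on the effective domain $\lambda_A^{\circ a}$ one gets $\suppfct_A(-y) = g(y)$ with $g$ as in the statement, after checking that $\lambda_A^{\circ a}\subseteq\{y^{\prime 2}-y^{\prime\prime 2}\geq 0\}$ (indeed $\lambda_A^\circ$ is contained in the dual quadric cone, by Lemma~\ref{lem:qcone} applied to polars, or directly). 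Then $\gamma_A^\circ = \{(y,t) : y^{\prime 2}-y^{\prime\prime 2}\geq 0,\ t\leq -g(y)\}$. Finally, since $\field_{\gamma_A}^\wedge[1]$ is supported on $\gamma_A^\circ$ and, away from the boundary, agrees with the answer predicted by the $c=0$ degeneration (where Lemma~\ref{lem:qcone} gives the shift $[-q]$ coming from the open part $\{y^{\prime 2}-y^{\prime\prime 2}>0\}$), I would conclude that $(\reim i\field_A)^\wedge \simeq \field_{\Int^\sharp\gamma_A^\circ}[1-q]$ where $\Int^\sharp$ denotes the appropriate "relative interior in the $y$-directions," which is exactly $\opb{q_1}\field_{\{y^{\prime 2}-y^{\prime\prime 2}\geq 0\}}\tens\field_{\{t\geq -g(y)\}}[-q]$. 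The delicate point requiring care is the cohomological degree shift and the precise boundary behaviour at $y^{\prime 2}-y^{\prime\prime 2}=0$; I would handle it by restricting to $\{y^{\prime 2}-y^{\prime\prime 2}>0\}$ (where the computation is a family version of Lemma~\ref{lem:qcone}, parametrized by $s$, convolved with $\field_{\{t\geq 0,y=0\}}$) and then checking that both sides vanish on $\{y^{\prime 2}-y^{\prime\prime 2}<0\}$ and have the correct stalks on the light cone by a direct Čech or Morse-theoretic count, or alternatively by invoking continuity/limit from $c=0$ via a deformation argument.
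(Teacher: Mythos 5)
Your opening moves are the right ones and coincide with the paper's: reduce to $r=0$, use \eqref{eq:iAF} to write $(\reim i\field_A)^\wedge\simeq\field_{\gamma_A}^\wedge[1]$, and identify $\gamma_A=\{x^{\prime2}-x^{\prime\prime2}\leq c^2s^2\}\cap\{s<0\}$. What you stop short of noticing is that this already finishes the computation: the closed set $\{x^{\prime2}-x^{\prime\prime2}\leq c^2s^2\}\subset\Vt$ is itself a homogeneous quadratic cone of signature $(p,q+1)$ (rescale $s$ by $c$), so Lemma~\ref{lem:qcone} applies \emph{verbatim} in $\Vt$ -- no deformation from $c=0$, no change of coordinates beyond the rescaling -- and gives $\field_{\{x^{\prime2}-x^{\prime\prime2}\leq c^2s^2\}}^\wedge\simeq\field_{\{y^{\prime2}-y^{\prime\prime2}\geq t^2/c^2\}}[-(q+1)]$. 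Combining with $\field_{\{s<0\}}^\wedge$ via \eqref{eq:convF} and the elementary set identity $\{y^{\prime2}-y^{\prime\prime2}\geq t^2/c^2\}=\{y^{\prime2}-y^{\prime\prime2}\geq0,\ |t|\leq c\sqrt{y^{\prime2}-y^{\prime\prime2}}\}$, the convolution with $\field_{\{t\geq0,\,y=0\}}$ is an explicit one-line computation producing $\field_{\{y^{\prime2}-y^{\prime\prime2}\geq0,\ t\geq-g(y)\}}$ with the stated shift $[-q]$. This is exactly the paper's proof.

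The second half of your proposal, however, is not a valid substitute for that computation. Lemma~\ref{lem:gammaHK} and the support-function formalism are stated (and only make sense) for \emph{convex} $A$, and here $A$ is not convex once $q\geq1$ and $c>0$: in that case $\suppfct_A(y)=+\infty$ for every $y\neq0$ (take $x''\to\infty$ along $y''$, or $x'=\sqrt{R^2+c^2}\,y'/|y'|$, $|x''|=R\to\infty$ if $y''=0$), so $\lambda_A^\circ=\{0\}$ and $\gamma_A^\circ=\{y=0,\ t\leq0\}$. Your claimed identity $\gamma_A^\circ=\{y^{\prime2}-y^{\prime\prime2}\geq0,\ t\leq-g(y)\}$ is therefore false, and the heuristic that $\field_{\gamma_A}^\wedge$ is ``supported on $\gamma_A^\circ$'' fails for non-convex cones -- indeed the actual support $\{y^{\prime2}-y^{\prime\prime2}\geq0,\ t\geq-g(y)\}$ is much larger than $\gamma_A^\circ$ and lies on the opposite side in $t$. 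The concluding appeal to a ``relative interior,'' a ``\v{C}ech or Morse-theoretic count,'' or ``continuity in $c$'' does not repair this; the degree shift and boundary behaviour you flag as delicate are precisely what the direct application of Lemma~\ref{lem:qcone} in $\Vt$ settles for free.
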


\begin{proof}
For $c=0$ the sheaf $\field_A$ is conic. The statement then follows
from Lemmas~\ref{lem:qcone} and \ref{lem:FiF}.

For $c>0$ one has $\gamma_A = \{x^{\prime 2}-x^{\prime\prime 2} \leq c^2 s^2\} \cap \{s<0\}$. 
By \eqref{eq:iAF}, Lemma~\ref{lem:qcone} and \eqref{eq:convF}, it
follows that
\begin{multline*}
(\reim i\field_A)^\wedge \simeq \field_{\gamma_A} ^\wedge[1] \simeq
(\field_{\{x^{\prime 2}-x^{\prime\prime 2} \leq c^2 s^2\}} \tens
\field_{\{s<0\}})^\wedge[1] \\ 
\simeq \field_{\{y^{\prime 2}-y^{\prime\prime 2} \geq (1/c^2) t^2\}}
\conv 
\field_{\{t\geq0,\,y=0\}}[-q].
\end{multline*}
Since
\[
\{y^{\prime 2}-y^{\prime\prime 2} \geq (1/c^2) t^2\} = \{y^{\prime
  2}-y^{\prime\prime 2} \geq 0,\ |t| \leq c \sqrt{y^{\prime 2}-y^{\prime\prime 2}}\},
\]
one has
\[
\field_{\{y^{\prime 2}-y^{\prime\prime 2} \geq (1/c^2) t^2\}} \conv \field_{\{t\geq0,\,y=0\}} \simeq
\field_{\{y^{\prime 2}-y^{\prime\prime 2} \geq 0,\ t\geq-c \sqrt{y^{\prime 2}-y^{\prime\prime 2}}\}}
\]
\end{proof}

\section{Exponential growth: real case}\label{se:expR}

Here, in order to treat functions with exponential growth, we will generalize
the construction of the sheaf of tempered functions of~\cite{KS01} (see also~\cite{Pre08}).

\medskip

Let $X$ be a real analytic manifold. 
From now on we set $\field=\C$.
Denote by $\BDC_{\Rc}(\field_X)$
the full triangulated subcategory of  $\BDC(\field_X)$ whose objects
have $\R$-constructible cohomology groups. 

Denote by $X_\sa$ the subanalytic site. This is the site
whose objects are open subanalytic subsets of $X$ and whose
coverings are locally finite in $X$. One calls subanalytic sheaf on $X$
a sheaf on $X_\sa$. 

Consider the natural map
\[
\rho \colon X \to X_\sa.
\]
Besides the usual right adjoint $\oim\rho$, the pull-back
functor $\opb\rho$ has a left adjoint $\eim\rho$.
The push-forward $\oim\rho$ induces a fully faithful exact functor from $\R$-constructible sheaves to
subanalytic sheaves. One thus identifies $\BDC_{\Rc}(\field_X)$ as a
full triangulated
subcategory of $\BDC(\field_{X_\sa})$.

Denoting by $\Db_X$ the sheaf of Schwartz's distributions, the
subanalytic sheaf $\Db_X^t$ of tempered distributions is defined by
\[
\Db_X^t(U) = \Db_X(X) / \sect_{X\setminus U}(X; \Db_X)
\]
for $U\subset X$ an open subanalytic subset. The sheaf
$\Db_X^t$ is acyclic on $X_\sa$.

One says that a function $\varphi$ on $U$ has polynomial growth
at $x_\circ\in X$ if it satisfies the following condition. For a local
coordinate system at $x_\circ$, there exist a
sufficiently small compact neighborhood $K$ of $x_\circ$ and constants
$c\geq0$, $m\in\Z_{>0}$ such that
\begin{equation}
\label{eq:poly}
|\varphi(x)| \leq 
c  \left( 1+ \frac1{\dist(K\setminus U,x)} \right)^m, \quad \forall
x\in K \cap U,
\end{equation}
where ``$\dist$'' denotes the euclidean distance on the domain of the
coordinates.

One says that $\varphi$ has polynomial growth on $X$ if it has
polynomial growth at any $x_\circ\in X$.

One says that $\varphi \in \shc_X^{\infty}(U)$ is tempered
if all of its derivatives have polynomial growth.

The subanalytic sheaf of tempered smooth functions is defined by
\[
\cit_X \colon U \mapsto \{\varphi\in\shc_X^{\infty}(U) \colon \varphi \text{ is tempered}\}.
\]
It is an acyclic sheaf on $X_\sa$.

Denote by $\shd_X$ the ring of analytic finite order differential
operators. Since sections of $\oim\rho\shd_X$ do not take growth conditions into account, the sheaves $\Db_X^t$ and $\cit_X$ are not $\oim\rho\shd_X$-modules. However, they are $\eim\rho\shd_X$-modules.

Recall that a function $\delta\colon X\to \R$ is called subanalytic if
its graph is a subanalytic subset of $X\times\R$.
By \L ojasiewicz inequalities one has

\begin{lemma}
\label{lem:rho}
The estimate \eqref{eq:poly} is equivalent to
\[
|\varphi(x)| \leq 
c  \left( 1+ \frac 1{\delta(x)} \right)^m, \quad \forall
x\in K \cap U,
\]
for $\delta\geq 0$ a continuous subanalytic function on $K$ such that
$K\cap\partial U = \{\delta(x)=0\}$.
\end{lemma}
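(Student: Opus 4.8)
The statement to prove is Lemma~\ref{lem:rho}, asserting the equivalence of the polynomial growth estimate \eqref{eq:poly} (phrased via $\dist(K\setminus U, x)$) with the analogous estimate where $\dist(K\setminus U,\cdot)$ is replaced by an arbitrary nonnegative continuous subanalytic function $\delta$ on $K$ vanishing exactly on $K\cap\partial U$.

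\textbf{Plan.} The proof rests entirely on the \L ojasiewicz inequalities comparing two subanalytic functions with the same zero set on a compact subanalytic set, so the first step is to observe that both $x\mapsto\dist(K\setminus U,x)$ and $\delta$ are continuous subanalytic functions on the compact set $K$. For the distance function this is standard: the distance to a subanalytic set is subanalytic (it is the projection of a subanalytic family of inequalities, and boundedness of $K$ keeps things within the subanalytic category), and it is continuous. Next I would check that the two functions have the same zero locus inside $K$: by hypothesis $\delta(x)=0$ iff $x\in K\cap\partial U$, while $\dist(K\setminus U,x)=0$ iff $x\in\overline{K\setminus U}$; after shrinking $K$ to a small compact neighborhood as in \eqref{eq:poly} and noting we only care about the behaviour on $K\cap U$, the relevant vanishing sets agree up to the boundary that matters, namely $K\cap\partial U$. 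The point is that on $K\cap U$, both quantities measure ``distance to where $U$ ends.''

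\textbf{Applying \L ojasiewicz.} Given two continuous subanalytic functions on a compact subanalytic set with the same zero set, \L ojasiewicz's inequality yields constants $c_1,c_2>0$ and exponents $N_1,N_2>0$ with
\[
c_1\,\delta(x)^{N_1} \le \dist(K\setminus U,x) \le c_2\,\delta(x)^{N_2}
\]
on $K$ (possibly after a further harmless shrinking of $K$, and using that both functions are bounded on the compact $K$ so the exponents can be taken $\ge 1$ or adjusted freely). From this, $1+1/\dist(K\setminus U,x)$ and $1+1/\delta(x)$ are each bounded by a constant times a positive power of the other on $K\cap U$: indeed $1/\dist(K\setminus U,x)\le c_1^{-1}\delta(x)^{-N_1}$, so $1+1/\dist \le C(1+1/\delta)^{N_1}$, and symmetrically. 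Substituting these comparisons into the growth estimate \eqref{eq:poly} converts a bound with constant $c$ and exponent $m$ into a bound of the same shape with a new constant and the exponent $m$ replaced by $mN_i$; conversely in the other direction. Since in both \eqref{eq:poly} and the asserted estimate the constants $c$ and $m$ are existentially quantified, this establishes the equivalence.

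\textbf{Main obstacle.} The only genuinely delicate point is the bookkeeping around the compact set $K$: the estimate \eqref{eq:poly} is a germ condition at $x_\circ$, it holds for \emph{sufficiently small} $K$, and the hypothesis on $\delta$ concerns a fixed $K$ with $K\cap\partial U=\{\delta=0\}$; one must make sure the \L ojasiewicz comparison is applied on a common compact subanalytic neighborhood of $x_\circ$ on which both functions are defined, continuous, subanalytic, and share their zero set, and then check that shrinking $K$ is compatible with both formulations. This is routine but is where care is needed; everything else is a direct invocation of \L ojasiewicz together with the elementary monotonicity of $r\mapsto 1+1/r$.
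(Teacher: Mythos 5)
Your argument is correct and is precisely the one the paper intends: the paper gives no proof beyond the phrase ``by \L ojasiewicz inequalities,'' and your fleshing-out (both $\dist(K\setminus U,\cdot)$ and $\delta$ are continuous subanalytic on a compact subanalytic set with the same zero locus $\overline{K\cap U}\setminus U$, hence mutually bounded by constants times powers of each other, so the existentially quantified estimates in $(c,m)$ transform into one another) is the standard and correct elaboration. Your identification of the zero-set/compact-set bookkeeping as the only delicate point is also accurate.
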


\medskip

Let $j\colon X \to X'$ be an open subanalytic embedding of real analytic
manifolds. Denote by $X_{j\text-\sa}$ the site structure induced on
$X$ by $X'_\sa$. This is the site whose objects are open subsets of $X$ which are
subanalytic in $X'$ and whose coverings are locally finite in
$X'$. Let us call $j$-subanalytic such open subsets.

Let us say that a sheaf $F$ on $X$ is $j$-$\R$-constructible if $\reim j F$ is
$\R$-constructible in $X'$. Denote by $\BDC_{j\text-\Rc}(\field_X)$
the full triangulated category of $\BDC(\field_X)$ whose objects have 
 $j$-$\R$-constructible cohomology groups. We identify
 $\BDC_{j\text-\Rc}(\field_X)$ to a full triangulated
 subcategory of $\BDC(\field_{X_{j\text-\sa}})$.

The following sheaves on $X_{j\text-\sa}$ take into account growth
conditions at infinity
\[
\Db_{X|X'}^t = \left.\Db_{X'}^t\right|_{X_{j\text-\sa}}, \quad 
\cit_{X|X'} = \left.\cit_{X'}\right|_{X_{j\text-\sa}}.
\]
Note that these are not sheaves of $\eim\rho\shd_X$-modules, but modules over the
ring $\eim\rho\shd_{X'}|_{X_{j\text-\sa}}$.

Set
\[
\widetilde X = X\times\R, \quad \widetilde X' = X' \times \PR(\R),
\]
where $\PR(\R)$ denotes the real projective line.

A function $f\colon X \to \R$ is called $j$-subanalytic if its graph
is subanalytic in $\widetilde X'$. 
Note that by \L ojasiewicz inequalities such an $f$ has polynomial growth.

Let $f\colon X \to \R$ be a continuous $j$-subanalytic function and
$U\subset X$ an open $j$-subanalytic subset.
One says that a function $\varphi$ on $U$ has $f$-exponential growth
at $x_\circ\in X'$ if it satisfies the following condition. For a local
coordinate system at $x_\circ$, there exist a
sufficiently small compact neighborhood $K$ of $x_\circ$ and constants $c\geq0$, $m\in\Z_{>0}$ such that
\[
|\varphi(x)| \leq 
c  \left( 1+ \frac1{\dist(K\setminus U,x)} + |f(x)| \right)^m e^{f(x)}, \quad \forall
x\in K \cap U.
\]
Note that one gets an equivalent definition by replacing the function
$\dist(K\setminus U,\cdot)$ with a subanalytic function $\delta$ as in
Lemma~\ref{lem:rho}.

One says that $\varphi$ has $f$-exponential growth on $X'$ if
 it has $f$-exponential growth at any $x_\circ\in X'$.

One says that $\varphi \in \shc_X^{\infty}(U)$ is $f$-tempered
if all of its derivatives have $f$-exponential growth.

\begin{definition}
\label{def:expdummy}
The presheaf of $f$-tempered smooth functions on the site $X_{j\text-\sa}$ is defined by
\[
\cit[f]_{X|X'} \colon U \mapsto \{\varphi\in\shc_X^{\infty}(U) \colon \varphi \text{ is $f$-tempered}\}.
\]
It is a presheaf of $\eim\rho\shd_{X'}|_{X_{j\text-\sa}}$-modules.
\end{definition}

Note that ``$(f+c)$-tempered'' is the same as ``$f$-tempered'' for $c\in\R$.
One has
\[
\cit_{X|X'} = \cit[0]_{X|X'}.
\]

Let us show how $f$-tempered functions are related with tempered
functions with one additional variable.

Denote by $q_1\colon \widetilde X = X\times\R \to X$  the projection. 
Let $s\in\R$ be the coordinate and denote by $D_\R$ the ring of
differential operators with polynomial coefficients.

\begin{proposition}
\label{pro:expdummy}
Let $f\colon X \to \R$ be a continuous $j$-subanalytic function.
There are isomorphisms
\begin{align*}
\cit[f]_{X|X'}&\simeq \roim{q_1}\rhom(\field_{\{s < -f(x)\}},
\rhom[D_\R](D_\R e^s,\cit_{\widetilde X|\widetilde
  X'})) \\
&\simeq \roim{q_1}\rhom(\field_{\{s \geq -f(x)\}},
\rhom[D_\R](D_\R e^s,\cit_{\widetilde X|\widetilde
  X'}))[1], \\
\cit_{X|X'}&\simeq \roim{q_1}\rhom[D_\R](D_\R e^{is},\cit_{\widetilde X|\widetilde X'}).
\end{align*}
In particular, the complexes on the right hand side are concentrated in
degree zero and the presheaf $\cit[f]_{X|X'}$ is an acyclic sheaf.
\end{proposition}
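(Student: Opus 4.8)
The plan is to reduce everything to the description of $\cit_{\widetilde X|\widetilde X'}$ and the behaviour of the functor $\rhom[D_\R](D_\R e^s,-)$, whose sections over an open set amount to taking solutions of the equation $(\partial_s - 1)u = 0$, i.e.\ functions of the form $\psi(x)e^s$. Concretely, I would first identify $\rhom[D_\R](D_\R e^s,\cit_{\widetilde X|\widetilde X'})$: since $D_\R e^s \simeq D_\R/D_\R(\partial_s-1)$ has the Koszul-type free resolution $0\to D_\R \xrightarrow{\;\partial_s-1\;} D_\R \to D_\R e^s\to 0$, the complex $\rhom[D_\R](D_\R e^s,\cit_{\widetilde X|\widetilde X'})$ is represented by $\cit_{\widetilde X|\widetilde X'} \xrightarrow{\;\partial_s-1\;} \cit_{\widetilde X|\widetilde X'}$. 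The key analytic point is that on a $j$-subanalytic open $V\subset\widetilde X$ this map is surjective onto sections (one integrates $\varphi e^{-s}$ in $s$, which preserves the tempered estimates because of the fibrewise-in-$s$ nature of the bound — a \L ojasiewicz-type argument using that $\widetilde X' = X'\times\PR(\R)$ controls growth as $s\to\pm\infty$ as well). Hence the complex is concentrated in degree zero, with $\mathcal H^0$ the sheaf $V\mapsto\{\psi\in\cit\text{-type sections}: \psi(x)e^s \text{ tempered on } V\}$.

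Next I would unwind the two outer functors. For the last isomorphism, I replace $e^s$ by $e^{is}$; then $\psi(x)e^{is}$ is tempered on $q_1^{-1}(U)$ for an open $j$-subanalytic $U\subset X$ iff $\psi$ itself is tempered on $U$ (the factor $e^{is}$ has modulus $1$ and all its $s$-derivatives are bounded, so it contributes nothing to the growth estimates, and conversely restricting to a slice $s=0$ recovers $\psi$), giving $\roim{q_1}\rhom[D_\R](D_\R e^{is},\cit_{\widetilde X|\widetilde X'})(U) = \cit_{X|X'}(U)$. For the first two isomorphisms I use the two excision triangles attached to the partition $\widetilde X = \{s<-f(x)\}\sqcup\{s\geq -f(x)\}$: applying $\roim{q_1}\rhom(-,\mathcal G)$ with $\mathcal G = \rhom[D_\R](D_\R e^s,\cit_{\widetilde X|\widetilde X'})$ to the triangle $\field_{\{s\geq -f\}}\to\field_{\widetilde X}\to\field_{\{s<-f\}}\xrightarrow{+1}$ shows the two right-hand members differ by a shift once one knows the section over all of $\widetilde X$ vanishes: indeed $\psi(x)e^s$ tempered on all of $\widetilde X$ near a point of $X'\times\{+\infty\}$ forces $\psi\equiv 0$, so $\roim{q_1}\mathcal G = 0$ and the triangle yields $\roim{q_1}\rhom(\field_{\{s<-f\}},\mathcal G) \simeq \roim{q_1}\rhom(\field_{\{s\geq -f\}},\mathcal G)[1]$, as claimed.

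It then remains to match $\roim{q_1}\rhom(\field_{\{s<-f(x)\}},\mathcal G)$ with $\cit[f]_{X|X'}$. Over an open $j$-subanalytic $U\subset X$ this computes the sections $\psi\in\shc^\infty_X(U)$ such that $\psi(x)e^s$ is tempered on the $j$-subanalytic open set $\{(x,s): x\in U,\ s<-f(x)\}$ of $\widetilde X$ — and this is exactly the claim already announced in the introduction (``$\varphi$ is $f$-tempered on $U$ iff $\varphi(x)e^s$ is tempered on $\{(x,s): x\in U,\ s<-f(x)\}$''). So the crux is this equivalence of estimates: on one side the bound $|\psi(x)|\leq c(1+\delta(x)^{-1}+|f(x)|)^m e^{f(x)}$, on the other the temperedness of $\psi(x)e^s$, controlled near the boundary of $\{s<-f(x)\}$ (where $\operatorname{dist}$ to the complement behaves like $-f(x)-s$) and near $X'\times\{-\infty\}$. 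Establishing this two-sided \L ojasiewicz estimate, uniformly including the derivatives in $x$ (the $s$-derivatives of $\psi(x)e^s$ just reproduce $\psi(x)e^s$ and the mixed $x$-derivatives bring down derivatives of $\psi$ which are controlled by hypothesis) is the main obstacle; everything else is formal manipulation of the six operations on the subanalytic site together with the acyclicity of $\cit_{\widetilde X|\widetilde X'}$, which transfers acyclicity to $\cit[f]_{X|X'}$ since the functors $\roim{q_1}$ and $\rhom(\field_{\{s<-f\}},-)$ applied to an acyclic object (shown concentrated in degree $0$) again yield an acyclic sheaf on $X_{j\text-\sa}$.
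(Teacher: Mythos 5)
Your strategy coincides with the paper's: represent $\rhom[D_\R](D_\R e^s,\cit_{\widetilde X|\widetilde X'})$ by the two-term complex $\partial_s-1$ acting on $\cit_{\widetilde X|\widetilde X'}$, identify the degree-zero cohomology with solutions $\varphi(x)e^s$ and match temperedness of $\varphi(x)e^s$ on $\{s<-f(x)\}$ with the $f$-tempered estimate (the nontrivial direction being obtained by evaluating at $s=-f(x)-1$), deduce the second isomorphism from the vanishing of $\roim{q_1}\rhom[D_\R](D_\R e^s,\cit_{\widetilde X|\widetilde X'})$ together with the excision triangle, and treat $e^{is}$ by the same scheme using $|e^{is}|=1$. (A minor slip: in the distinguished triangle the open piece $\field_{\{s<-f\}}$ is the first term and the closed piece $\field_{\{s\geq -f\}}$ the third, not the other way around; the shift you extract is nonetheless the correct one.)

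The one genuine gap is the surjectivity of $\partial_s-1$ on $E=\sect(\opb{q_1}U\cap\{s<-f(x)\};\cit_{\widetilde X|\widetilde X'})$, which you dispatch with ``one integrates $\varphi e^{-s}$ in $s$, which preserves the tempered estimates''. That preservation is exactly the point at issue, and it depends on where the integration is anchored: a fixed base point need not lie in the fibre $\{s<-f(x)\}$ at all, while integrating from $s=-\infty$ diverges, since $e^{-u}$ blows up there against the merely polynomial bound on the integrand. The paper chooses a smooth $\gamma\colon X\to\R$ with $-2<f+\gamma<-1$ and sets $\Phi(x,s)=e^s\int_{\gamma(x)}^s e^{-u}\Psi(x,u)\,du$, so that the base point floats at bounded distance below the boundary $s=-f(x)$; that $\Phi$ is again tempered is then a genuine computation (Lemma~\ref{lem:est}), using $|u|\leq|u+f(x)|+|f(x)|$ and the polynomial growth of $f$ and $\gamma$. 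The same care is needed in the two places where you invoke surjectivity on the full cylinder $\opb{q_1}U$ (to get $\roim{q_1}$ of the solution complex of $e^s$ to vanish in degree one, and to get concentration in degree zero in the $e^{is}$ case): there the primitive must also stay tempered as $s\to+\infty$, which forces yet another choice of anchoring. Supplying these estimates would complete your argument; everything else matches the paper's proof.
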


\begin{proof}
(i) Let us prove the first isomorphism.
Set
\[
\shc = \roim{q_1}\rhom(\field_{\{s < -f(x)\}},
\rhom[D_\R](D_\R e^s,\cit_{\widetilde X|\widetilde
  X'})).
\]
For $U\subset X$ a $j$-subanalytic open subset, one has
\begin{equation}
\label{eq:CUE}
\rsect(U;\shc) \simeq (E \to[\partial_s-1] E),
\end{equation}
where the complex on the right hand side is in degrees 0 and 1, and
\[
E = \sect( \opb{q_1}U \cap \{s < -f(x)\} ;\cit_{\widetilde X|\widetilde X'}).
\]

\smallskip\noindent (i-a)
To prove that $\shc$ is concentrated in degree zero,
it is enough to show the surjectivity of $\partial_s-1$ in
\eqref{eq:CUE}.

Let $\gamma\colon X\to\R$ be a $C^\infty$  function such that
$-2<f+\gamma<-1$.
For $\Psi\in E$, a $C^\infty$ solution $\Phi$ to $(\partial_s-1)\Phi =
\Psi$ is given by
\begin{equation}
\label{eq:Phi}
\Phi(x,s) = e^s \int_{\gamma(x)}^s e^{-u}\Psi(x,u)\,du.
\end{equation}
We are thus left to prove that $\Phi\in E$. Since the estimates for
the derivatives of $\Phi$ are similar, let us only show that 
$\Phi$ has polynomial growth. Since $\Psi$ has polynomial growth,
by Lemma~\ref{lem:rho} any $x_\circ\in X'$ has a compact neighborhood $K$
such that there are constants $c,m$ with 
\begin{multline*}
|\Psi(x,s)| \leq 
c  \left( 1+ \frac1{\dist(K\setminus U,x)} + |s| + \frac 1{|s+f(x)|} \right)^m, \\ \forall
x\in K \cap U,\ \forall s<-f(x).
\end{multline*}
Then \eqref{eq:Phi} implies
\[
|\Phi(x,s)| \leq c e^s \left|\int_{\gamma(x)}^s e^{-u} \left(
      1+ \frac1{\dist(K\setminus U,x)} + |u| + \frac 1{|u+f(x)|} \right)^m\,du\right|.
\]
One thus deduces that $\Phi$ has polynomial growth from
Lemma~\ref{lem:est} below.

\noindent (i-b)
By \eqref{eq:CUE}, an element of
$H^0\rsect(U;\shc)$ is a solution $\Phi\in E$ of the equation
$(\partial_s-1)\Phi = 0$. Thus $\Phi(x,s) = \varphi(x)e^s$.
The map
\[
\cit[f]_{X|X'}(U) \to H^0\rsect(U;\shc), \quad
\varphi(x)\mapsto \varphi(x)e^s
\]
is well defined. To show that it is an isomorphism, we have to prove that
it is surjective. Given $\Phi(x,s) = \varphi(x)e^s$ with $\Phi\in E$,
there is an estimate
\begin{multline*}
|\varphi(x) |e^s \leq 
c  \left( 1+\frac1{\dist(K\setminus U,x)} + |s| + \frac1{|s+f(x)|} \right)^m , \\ \forall
x\in K \cap U,\ \forall s<-f(x).
\end{multline*}
Taking $s=-f(x)-1$, we see that $\varphi$ has $f$-exponential growth.
A similar argument holds for the derivatives of $\Phi$.

\noindent (ii)
The second isomorphism in the statement follows from the first one if
one shows that
\[
\roim{q_1}\rhom[D_\R](D_\R e^s,\cit_{\widetilde
  X|\widetilde
  X'}) = 0.
\]
This is proved in a similar way to part (i) above.

\noindent (iii)
The proof of the third isomorphism in the statement is again similar
to part (i) above taking $\gamma=0$.
\end{proof}

\begin{lemma}
\label{lem:est}
Let $f\colon X\to\R$ be a continuous $j$-subanalytic function and
$\gamma\colon X\to\R$ a $C^\infty$  function such that
$-2<f+\gamma<-1$. Then, for any $m,m'\in\Z_{\geq 0}$, the function defined for $x\in
X$, $s<-f(x)$ by
\[
e^s\int_{\gamma(x)}^s\frac{|u|^m e^{-u}}{|u+f(x)|^{m'}}du
\]
has polynomial growth on $\widetilde X'$.
\end{lemma}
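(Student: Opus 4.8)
The statement to prove is Lemma~\ref{lem:est}: that for a continuous $j$-subanalytic function $f$ and a $C^\infty$ function $\gamma$ with $-2<f+\gamma<-1$, the function $\Phi_{m,m'}(x,s) = e^s\int_{\gamma(x)}^s |u|^m e^{-u}|u+f(x)|^{-m'}\,du$ has polynomial growth on $\widetilde X'$. The key point is that $f$ has polynomial growth on $X'$ (by the \L ojasiewicz inequalities, as noted in the text), so I may assume a bound $|f(x)| \leq c_0(1+1/\delta(x))^{m_0}$ near any fixed $x_\circ \in X'$, where $\delta$ is a positive continuous subanalytic function vanishing exactly on $K\cap\partial X$. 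The goal is then an estimate of the same shape for $\Phi_{m,m'}$ on $K\times\PR(\R)$ — but note $s$ ranges over a set bounded above by $-f(x)$, and $\widetilde X' = X'\times\PR(\R)$, so I also need to control the behaviour as $s\to-\infty$ (which corresponds to approaching a point at infinity in $\PR(\R)$).

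\textbf{First I would reduce the integral to elementary pieces.} Write $a = \gamma(x)$, so that $-2 < a + f(x) < -1$, i.e.\ $a$ and $-f(x)$ differ by a bounded amount; in particular the lower endpoint $a$ and the relevant threshold $-f(x)$ are comparable, and $|a|$, $|f(x)|$ differ by at most $2$. On the range of integration $u\in[\gamma(x),s]$ with $s < -f(x)$, I split according to the position of $u$ relative to a small fixed window around $-f(x)$. For $u$ bounded away from $-f(x)$ — say $|u+f(x)|\geq 1$ — the factor $|u+f(x)|^{-m'}$ is $\leq 1$, and one is left to bound $e^s\int |u|^m e^{-u}\,du$, which after integration by parts $m$ times is dominated by $P(|s|,|f(x)|)$ for a polynomial $P$ (the antiderivative of $|u|^m e^{-u}$ evaluated at the endpoints, times $e^s$, and $e^s\cdot e^{-a}$ is bounded since $s < -f(x)$ and $a + f(x)$ is bounded). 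For $u$ in the window $|u+f(x)|<1$, the singular factor is integrable: $\int_{a}^{a'} |u+f(x)|^{-m'}\,du$ — wait, $m'$ could be $\geq 1$ making this divergent, so here I must use instead that the \emph{outer} factor $e^s$ with $s\leq -f(x)$ combines with $e^{-u}$ to stay bounded, and that $|u|^m$ is bounded by a polynomial in $|f(x)|$ on the window. The honest point is that the integrand near $u=-f(x)$ is multiplied by $e^{s}$ with $s<-f(x)$, and $\int_{\gamma(x)}^s$ never actually reaches $-f(x)$: we have $s<-f(x)$ strictly, so the integral is over $[\gamma(x),s]\subset[\gamma(x),-f(x))$ and I can bound $|u+f(x)|^{-m'} \leq |s+f(x)|^{-m'}$ on that whole interval only if $m'\le$ something — no; rather $|u+f(x)|\ge |s+f(x)|$ fails too. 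The correct move: substitute $v = -f(x) - u \geq -f(x)-s > 0$, turning the integral into $e^s e^{f(x)}\int_{-f(x)-s}^{-f(x)-\gamma(x)} |f(x)+v|^m e^{v} v^{-m'}\,dv$ — and now $e^s e^{f(x)} \le 1$, the upper limit $-f(x)-\gamma(x)\in(1,2)$ is bounded, so $e^v\le e^2$, and we need $\int_{w}^{2}(|f(x)|+v)^m v^{-m'}\,dv$ with $w = -f(x)-s > 0$. Expanding $(|f(x)|+v)^m$ and integrating term by term, each term is a bounded constant times $|f(x)|^{m-k}\cdot \frac{v^{k-m'+1}}{k-m'+1}\big|_w^2$ (or a $\log$ if $k-m'+1=0$), which is bounded by a polynomial in $|f(x)|$ and in $1/w = 1/|s+f(x)|$.

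\textbf{Assembling the estimate.} Collecting the pieces, $\Phi_{m,m'}(x,s)$ is bounded near any $x_\circ\in X'$ by a polynomial in $|s|$, $|f(x)|$, and $1/|s+f(x)|$. Now $|f(x)|$ is $\leq c_0(1+1/\delta(x))^{m_0}$, so that contribution is absorbed into a bound of the form $c(1+1/\dist(K\setminus U,x))^{m''}$ after applying Lemma~\ref{lem:rho}. The factors $|s|$ and $1/|s+f(x)|$ measure growth in the $\R$-direction and blow-up at $s = -f(x)$; since $\widetilde X' = X'\times\PR(\R)$ and the open set in question is $\{s<-f(x)\}$, whose boundary in $\widetilde X'$ consists of $\{s=-f(x)\}$ together with the point $\infty\in\PR(\R)$, these are exactly polynomial-growth contributions relative to $\widetilde X'$: $|s|$ is controlled near $\infty$ by the projective coordinate, and $1/|s+f(x)|$ by the distance to $\{s=-f(x)\}$ (again via a \L ojasiewicz/subanalytic distance comparison, using that the graph of $-f$ is subanalytic in $\widetilde X'$). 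Hence $\Phi_{m,m'}$ has polynomial growth on $\widetilde X'$.

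\textbf{Main obstacle.} The genuine difficulty is bookkeeping the interaction of the three scales — the singularity of the integrand at $u=-f(x)$, the exponential decay $e^{-u}$ fighting the prefactor $e^s$, and the behaviour as $s\to-\infty$ — uniformly in $x$ near the boundary. The substitution $v=-f(x)-u$ is the crucial trick that makes $e^{s+f(x)}\le1$ do all the work and reduces everything to a one-variable integral $\int_w^2 (|f(x)|+v)^m v^{-m'}\,dv$ with bounded upper limit; once that is in place the rest is a routine term-by-term expansion, plus the standard translation of ``bounded by a polynomial in $|f|$, $|s|$, $1/|s+f|$'' into ``polynomial growth on $\widetilde X'$'' via Lemma~\ref{lem:rho} and the \L ojasiewicz inequalities. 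I would also need to handle the two sub-cases $m'=0$ and $m'\ge1$ (in the former there is no singularity at all and the bound is immediate) and to be mindful of the logarithmic term arising when an exponent hits $-1$, noting that $|\log w| \le c(1 + 1/w)$ absorbs it.
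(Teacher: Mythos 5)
Your argument is correct and is essentially the paper's proof: there one reduces via $|u|\le|u+f(x)|+|f(x)|$ to the two model integrals $e^s\int_{\gamma(x)}^s|u|^m e^{-u}\,du$ and $e^s\int_{\gamma(x)}^s e^{-u}|u+f(x)|^{-m'}\,du$, and treats the second by the same shift $u\mapsto u+f(x)$ and the same inequality $e^{s+f(x)}\le 1$, ending with the same bound by a polynomial in $|s|$, $|f(x)|$ and $1/|s+f(x)|$. The one point to watch is that your step ``$e^v\le e^2$'' after the substitution is only valid on the portion of the integral with $v\le -f(x)-\gamma(x)<2$ (i.e.\ near the singularity); the complementary portion, which is the relevant one as $s\to-\infty$, has $v\ge 1$ so the factor $v^{-m'}$ is harmless there, and it is already covered by your earlier antiderivative estimate for $e^s\int|u|^m e^{-u}\,du$.
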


\begin{proof}
Recall that $f(x)$ has polynomial growth. By the estimate $|u| \leq
|u+f(x)| + |f(x)|$, one reduces to prove that for $m\in\Z_{\geq 0}$
the functions
\[
\Phi(x,s) = e^s\int_{\gamma(x)}^s |u|^m e^{-u} du,
\qquad
\Psi(x,s) = e^s\int_{\gamma(x)}^s\frac{e^{-u}}{|u+f(x)|^{m+1}}du,
\]
have polynomial growth. 

Recall that $\int u^m e^{-u} du = P(u) e^{-u}$ for
$P$ a polynomial of degree $m$. Then
\[
\left| \Phi(x,s) \right| 
\leq
c (1+ |s|^m + |\gamma(x)|^m e^{s+\gamma(x)}).
\]
Since $\gamma$ has polynomial growth and $s+\gamma(x)<0$, we deduce
that $\Phi$ has polynomial growth.

Since $-2<f+\gamma<-1$ and $s+f(x)<0$, we have
\begin{align*}
  \left|\Psi(x,s)\right| &=
  \left|e^{s+f(x)}\int_{\gamma(x)+f(x)}^{s+f(x)}\frac{e^{-u}}{|u|^{m+1}}du\right| \\
  &\leq c
  \left(1+e^{s+f(x)}\left|\int_{-1}^{s+f(x)}\frac{e^{-u}}{|u|^{m+1}}du\right|\right).
\end{align*}
From the estimate $e^{-u}/|u|^{m+1} \leq e^{-u} + 1/|u|^{m+2}$ ($u<0$), we finally get
\[
\left|\Psi(x,s)\right| \leq
c' \left(1+\frac{1}{|s+f(x)|^{m+1}}\right).
\]
\end{proof}

\section{Exponential growth: complex case}\label{se:expC}

Let $X$ be a complex analytic manifold.
Denote by $\sho_X$ the sheaf of holomorphic functions and
by $\shd_X$ the ring of holomorphic finite order differential operators.

Denote by $\overline X$ the conjugate complex manifold to $X$,
so that sections of $\sho_{\overline X}$ are conjugates of sections of $\sho_X$.
The real analytic manifold underlying $X$ is identified with the diagonal of $X\times\overline X$. 
By Dolbeault resolution, one has
\[
\sho_X= \rhom[\shd_{\overline X}|_X](\sho_{\overline X}|_X, \shc^{\infty}_{X_\R}).
\]
Similarly, following \cite{KS01}, the complex of tempered holomorphic
functions is defined by
\[
\sho^t_X = \rhom[\shd_{\overline X}|_X](\sho_{\overline X}|_X, \cit_{X_\R}).
\]

Let $j\colon X \to X'$ be an open subanalytic embedding of complex analytic
manifolds. 
For $f\colon X\to\R$ a continuous $j$-subanalytic function,
let
\[
\sho_{X|X'}^{\temp f} = \rhom[\eim\rho\shd_{\overline
  X'}|_{X_{j\text-\sa}}](\eim\rho\sho_{\overline
  X'}|_{X_{j\text-\sa}},\cit[f]_{X|X'})
\]
be the Dolbeault complex of $\cit[f]_{X|X'}$.
In particular, $\sho_{X}^t = \sho_{X|X}^{\temp 0}$. Set
\[
\sho_{X|X'}^t =
\sho_{X|X'}^{\temp 0}.
\]
Recall that if $X$ is a complexification of
$M$, then
\begin{equation}
\label{eq:Dbtmm'}
\Db_{M|M'}^t \simeq \rhom(F,\sho^t_{X|X'}),
\end{equation}
where $M'$ is the closure of $M$ in $X'$, $F =
\rhom(\field_M,\field_X)\simeq or_M[-n]$ and $or_M$ is the
orientation sheaf and $n$ the dimension of $M$.

Denote by $\PC(\C)$ the complex projective line. Set
\[
\widetilde X = X\times\C, \quad \widetilde X' = X' \times \PC(\C)
\]
and denote by $q_1\colon \widetilde X \to X$ the projection. 

\begin{proposition}
\label{pr:Lapnh}
There are isomorphisms in $\BDC(\eim\rho\shd_{\overline
  X'}|_{X_{j\text-\sa}})$
\begin{align*}
\sho_{X|X'}^{\temp f} &\simeq
\roim{q_1}\rhom(\field_{\{\Re\,s<-f(x)\}} , \rhom[D_\C](D_\C e^s,
\sho_{\widetilde X|\widetilde X'}^t)) \\
&\simeq \roim{q_1}\rhom(\field_{\{\Re\,s\geq -f(x)\}} , \rhom[D_\C](D_\C e^s,
\sho_{\widetilde X|\widetilde X'}^t))[1].
\end{align*}
\end{proposition}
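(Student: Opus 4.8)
The plan is to deduce Proposition~\ref{pr:Lapnh} from Proposition~\ref{pro:expdummy} by applying the Dolbeault functor $\rhom[\eim\rho\shd_{\overline X'}|_{X_{j\text-\sa}}](\eim\rho\sho_{\overline X'}|_{X_{j\text-\sa}},-)$, which by construction sends $\cit[f]_{X|X'}$ to $\sho^{\temp f}_{X|X'}$. The first point to settle is that this functor concerns only the $\overline X$-directions, hence commutes, by adjunction, with $\roim{q_1}$, with $\rhom(\field_{\{\Re\,s<-f(x)\}},-)$ and $\rhom(\field_{\{\Re\,s\geq-f(x)\}},-)$ (these subsets being pulled back from $X\times\R$ along the real part of $s$), and with $\rhom[D_\C](D_\C e^s,-)$.

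The one genuinely new ingredient will be to rewrite $\rhom[D_\C](D_\C e^s,\sho^t_{\widetilde X|\widetilde X'})$ in real terms. Writing $s=\sigma+i\tau$ and unwinding $\sho^t_{\widetilde X|\widetilde X'}$ as the Dolbeault complex of $\cit_{\widetilde X|\widetilde X'}$, one finds that $\rhom[D_\C](D_\C e^s,\sho^t_{\widetilde X|\widetilde X'})$ is the Koszul complex of the commuting pair of operators $(\partial_s-1,\partial_{\bar s})$ acting on the Dolbeault complex in the $X$-directions alone of $\cit_{\widetilde X|\widetilde X'}$. Since $\partial_\sigma-1=(\partial_s-1)+\partial_{\bar s}$ and $\partial_\tau-i=i(\partial_s-1)-i\partial_{\bar s}$, an invertible scalar change of basis carries this pair to $(\partial_\sigma-1,\partial_\tau-i)$, so that $\rhom[D_\C](D_\C e^s,\sho^t_{\widetilde X|\widetilde X'})$ is the Dolbeault complex, in the $X$-directions, of $\rhom[D_\R](D_\R e^\sigma,\rhom[D_\R](D_\R e^{i\tau},\cit_{\widetilde X|\widetilde X'}))$, where $D_\R e^\sigma$ and $D_\R e^{i\tau}$ are the corresponding modules over the rings of differential operators with polynomial coefficients in the real variables $\sigma$ and $\tau$.

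Combining this with the commutations above, the statement is reduced to the isomorphism
\[
\roim{q_1}\rhom\bigl(\field_{\{\sigma<-f(x)\}},\rhom[D_\R](D_\R e^\sigma,\rhom[D_\R](D_\R e^{i\tau},\cit_{\widetilde X|\widetilde X'}))\bigr)\simeq\cit[f]_{X|X'}
\]
and its analogue with $\field_{\{\sigma\geq-f(x)\}}$ and a shift $[1]$. Factoring $q_1$ through $X\times\R\times\R\to X\times\R\to X$, I would first remove the variable $\tau$ using the third isomorphism of Proposition~\ref{pro:expdummy} (applied with extra variable $\tau$) --- the projections commuting with the remaining $\rhom$ functors --- and then remove $\sigma$ using the first, respectively the second, isomorphism of Proposition~\ref{pro:expdummy} (applied with extra variable $\sigma$). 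The shift $[1]$ should appear, exactly as in the proof of Proposition~\ref{pro:expdummy}, from the distinguished triangle relating $\field_{\{\sigma<-f(x)\}}$, $\field_{X\times\R}$ and $\field_{\{\sigma\geq-f(x)\}}$, together with the vanishing $\roim{q_1}\rhom[D_\C](D_\C e^s,\sho^t_{\widetilde X|\widetilde X'})=0$, which follows from the two vanishings established in that proof.

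The main obstacle will be that $\cit_{\widetilde X|\widetilde X'}$ --- tempered smooth functions on the real manifold underlying $\widetilde X=X\times\C$, measured with respect to $\widetilde X'=X'\times\PC(\C)$ --- is not literally the sheaf of tempered functions on $X\times\R\times\R$ measured with respect to $X'\times\PR(\R)\times\PR(\R)$ that iterating Proposition~\ref{pro:expdummy} produces: the two live on different subanalytic sites and, a priori, impose different growth conditions at infinity. I expect to overcome this by checking that on the open subsets $\opb{q_1}U\cap\{\sigma<-f(x)\}$ that actually occur the two notions of temperedness coincide, because by \L ojasiewicz inequalities the distance to the complement in $X'\times\PC(\C)$ and that in $X'\times\PR(\R)\times\PR(\R)$ are comparable there --- away from $s=\infty$ both are governed by the distance to $\partial X$ and by $-f(x)-\sigma$, while near $s=\infty$ both behave like $1/|s|$, hence like $1/\max(|\sigma|,|\tau|)$. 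As a variant one can bypass this comparison and argue directly, mimicking the proof of Proposition~\ref{pro:expdummy} with the real extra variable replaced by the complex variable $s$ and solving $(\partial_s-1)\Phi=\Psi$ by an explicit $\Phi(x,s)=\psi(x)e^s$; here one must take additional care, since the integral formula analogous to \eqref{eq:Phi} does not by itself yield a function holomorphic in $x$.
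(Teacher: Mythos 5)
Your proposal is correct and follows essentially the same route as the paper: split the Dolbeault complex of $\widetilde X=X\times\C$ into the $X$- and $s$-directions, identify $D_\C e^s\etens D_{\overline\C}/\langle\overline\partial_s\rangle$ with $D_\R e^{\sigma}\etens D_\R e^{i\tau}$ under $s=\sigma+i\tau$, and then apply Proposition~\ref{pro:expdummy} twice (once for each real variable). The only notable difference is that you isolate and justify the comparison $\cit_{\widetilde X|\widetilde X'}\simeq\cit_{X\times\R\times\R|X'\times\PR(\R)\times\PR(\R)}$, which the paper asserts without proof.
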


\begin{proof}
(i)
Let us prove the first isomorphism.
One has
\begin{align*}
  \sho_{\widetilde X|\widetilde X'}^t & =
  \rhom[\eim\rho\shd_{\overline {\widetilde X}'}|_{\widetilde
    X_{j\text-\sa}}](\eim\rho\sho_{\overline {\widetilde
      X}'}|_{\widetilde X_{j\text-\sa}},\cit[f]_{\widetilde
    X|\widetilde X'}) \\
  &\simeq \rhom[\eim\rho\shd_{\overline
    X'}|_{X_{j\text-\sa}}](\eim\rho\sho_{\overline
    X'}|_{X_{j\text-\sa}},\rhom[D_{\overline \C}](D_{\overline
    \C}/\langle\overline\partial_s\rangle, \cit[f]_{\widetilde
    X|\widetilde X'})),
\end{align*}
where $\langle\overline\partial_s\rangle \subset D_{\overline \C}$
denotes the left ideal generated by
$\langle\overline\partial_s\rangle$.  It is then enough to prove the
isomorphism
\begin{multline*}
  \cit[f]_{X|X'} \simeq \\
  \roim{q_1}\rhom(\field_{\{\Re\,s<-f(x)\}} , \rhom[D_\C\etens
  D_{\overline \C}](D_\C e^s \etens D_{\overline
    \C}/\langle\overline\partial_s\rangle, \cit_{\widetilde
    X|\widetilde X'})).
\end{multline*}
In the identification $\C=\R\times\R$ given by $s=\lambda+i\mu$, there
is an isomorphism of $D_{\R\times\R}$-modules
\[
D_\C e^s
\etens D_{\overline \C}/D_{\overline \C}\,\overline\partial_s \simeq D_\R e^{\lambda} \etens
D_\R e^{i\mu}.
\] 
Moreover, one has
\[
\cit_{\widetilde X|\widetilde X'} \simeq \cit_{X\times\R\times\R|X\times\PR(\R)\times\PR(\R)}.
\] 
The statement then follows from Proposition~\ref{pro:expdummy}.

\medskip\noindent(ii)
The proof of the second isomorphism is similar.
\end{proof}

\begin{remark}
It would be interesting to consider also other growth conditions, like for example those used in~\cite{Kaw70} to construct Fourier hyperfunctions.
\end{remark}

Consider the closed embedding
\[
i\colon X\to \widetilde X, \quad x \mapsto (x,-1). 
\]
It follows from \cite[Theorem~7.4.6]{KS01} that one has
\begin{equation}
\label{eq:iOt}
\roim i\sho^t_{X|X'} \simeq
\rhom[D_\C](D_\C\delta(s+1), \sho^t_{\widetilde X|\widetilde X'})[1],
\end{equation}
where $D_\C\delta(s+1) = D_\C/D_\C(s+1)$ is the $D_\C$ module generated
by the $\delta$ function of $s=-1$.

\section{Laplace transform}\label{se:L}
We recall here a theorem of~\cite{KS97} on the
Fourier-Laplace transform between tempered holomorphic functions associated
with conic sheaves on dual complex vector spaces. 

\medskip

Let $\VV$ and $\WW$ be dual complex $n$-dimensional vector spaces by
the complex pairing $(x,y)\mapsto \langle x,y  \rangle$.
Denote by $\PC(\VV)$ and $\PC(\WW)$ the complex projective
compatifications of $\VV$ and $\WW$, respectively.
Let $j\colon \VV \to \PC(\VV)$ and $j\colon\WW\to\PC(\WW)$ be the embeddings.

Denote by $D_\VV$ the Weyl algebra and by
\[
(\cdot)^\wedge\colon D_\VV \to D_{\WW}
\]
the Fourier isomorphism. If $(x_1,\dots,x_n)$ is a coordinate system
on $\VV$ and $(y_1,\dots,y_n)$ the dual coordinate system on $\WW$,
this is given by
\[
x_i^\wedge = -\partial_{y_i}, \quad \partial_{x_i}^\wedge = y_i. 
\]
If $N$ is a $D_\VV$-module, denote by $N^\wedge$ the vector space $N$
endowed with the $D_{\WW}$-module structure induced by $\wedge$.

Note that the Fourier-Sato transform between $\VV$ and $\WW$ is associated with the
kernel $\field_{\{\Re\langle x,y\rangle\leq0\}}$.

A result linking the Laplace and Fourier-Sato transform was
established in \cite{Mal88}. This was reconsidered and generalized
in~\cite{KS97}, whose main result describes the Laplace transform of
conic tempered holomorphic functions:

\begin{theorem}[{\cite[Theorem~5.2.1]{KS97}}]
\label{thm:KSL}
Let $F\in\BDC_{\R^+,j\text-\Rc}(\field_\VV)$.
The Laplace transform $\varphi\mapsto\int\varphi(x)e^{-\langle
x,y\rangle} dx$
induces an isomorphism in $\BDC(D_{\WW})$
  \[
  \RHom(F,\sho_{\VV|\PC(\VV)}^t)^\wedge \simeq 
\RHom(F^\wedge[n],\sho_{\WW|\PC(\WW)}^t).
  \]
\end{theorem}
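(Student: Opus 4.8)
The plan is to follow the strategy of~\cite{KS97}: use the kernel calculus of Section~\ref{se:ker} to transfer the Fourier--Sato kernel from one side of the statement to the other, reducing the theorem to one computation — that the tempered holomorphic solutions of the real half-space $Z=\{(x,y)\in\VV\times\WW\colon\Re\langle x,y\rangle\leq0\}$, taken up to the projective compactification, form the tempered version of the exponential $D$-module, which is exactly the kernel implementing the Fourier isomorphism $(\cdot)^\wedge$ on $D$-modules. \textbf{Step 1 (kernel reformulation).} Since $F^\wedge = F\comp[\VV]\field_Z$ and the projections of $\PC(\VV)\times\PC(\WW)$ onto its factors are proper (so that $\reim{}$ and $\roim{}$ agree on the subanalytic sheaves involved), the composition adjunction~\eqref{eq:compadj} and the projection formula give
\[
\RHom\bigl(F^\wedge[n],\sho^t_{\WW|\PC(\WW)}\bigr)\simeq\RHom(F,\mathcal L),\qquad \mathcal L:=\bigl[\field_Z,\sho^t_{\WW|\PC(\WW)}\bigr]\low{\WW}[-n],
\]
where $\mathcal L$ is a subanalytic sheaf on $\VV$ up to $\PC(\VV)$, carrying a $D_\WW$-action inherited from $\sho^t_{\WW|\PC(\WW)}$ and making the isomorphism $D_\WW$-equivariant. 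Thus the theorem reduces to an isomorphism of $D_\WW$-modules $\mathcal L\simeq(\sho^t_{\VV|\PC(\VV)})^\wedge$ compatible with the morphism induced by the Laplace integral; and since $(\cdot)^\wedge$ on a $D_\VV$-module only twists the action, $\RHom(F,(\sho^t_{\VV|\PC(\VV)})^\wedge)=\RHom(F,\sho^t_{\VV|\PC(\VV)})^\wedge$, which is what we want.

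\textbf{Step 2 (the exponential kernel: analytic core).} The object $\mathcal L$ is controlled by the tempered holomorphic solution complex $\rsect_Z\,\sho^t_{\VV\times\WW|\PC(\VV)\times\PC(\WW)}$. The key lemma is that this complex is concentrated in a single degree and coincides, $D_{\VV\times\WW}$-equivariantly and up to shift, with the tempered de Rham (equivalently, solution) complex of the exponential $D$-module $\mathcal E=\shd_{\VV\times\WW}\,e^{-\langle x,y\rangle}$, extended meromorphically along the boundary divisor of $\PC(\VV)\times\PC(\WW)$. Its proof has two ingredients. First, a microlocal one: $SS(\field_Z)$ is contained in the characteristic variety of $\mathcal E$, so a Cauchy--Kowalevski--Kashiwara type argument identifies the underlying holomorphic solutions and reduces the matter to growth bookkeeping. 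Second, the estimates at the hyperplane at infinity: near $\PC(\VV)\times\PC(\WW)\setminus(\VV\times\WW)$ one must show that multiplication by $e^{-\langle x,y\rangle}$ turns the $Z$-temperedness estimate into temperedness up to infinity. After resolving the boundary divisor this reduces, exactly as in Lemma~\ref{lem:est}, to explicit one-variable L\"ojasiewicz-type integral estimates using the precise shape of $Z$ near infinity; the acyclicity of $\sho^t$-type sheaves and manipulations in the spirit of Proposition~\ref{pr:Lapnh} then upgrade the local estimates to the global identification, and pushing forward along the proper projections of $\PC(\VV)\times\PC(\WW)$ onto its factors produces $\mathcal L$.

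\textbf{Step 3 (matching the $D_\WW$-structures).} With the kernel identified, the morphism realizing $\mathcal L\simeq(\sho^t_{\VV|\PC(\VV)})^\wedge$ is induced by the Laplace integral $\varphi(x)\mapsto\int\varphi(x)e^{-\langle x,y\rangle}\,dx$, and the compatibility with the $D_\WW$-actions is the classical computation: differentiating under the integral sign gives $\partial_{x_i}\leftrightarrow y_i$ and $x_i\leftrightarrow-\partial_{y_i}$, i.e.\ the Laplace transform intertwines the Fourier-twisted $D_\VV$-action on $\sho^t_{\VV|\PC(\VV)}$ with the $D_\WW$-action on $\mathcal L$. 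Combining with Steps 1--2 yields the asserted isomorphism in $\BDC(D_\WW)$
\[
\RHom(F,\sho^t_{\VV|\PC(\VV)})^\wedge\simeq\RHom\bigl(F^\wedge[n],\sho^t_{\WW|\PC(\WW)}\bigr).
\]

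\textbf{Main obstacle.} The difficulty is concentrated entirely in Step 2, and within it at the hyperplane at infinity: one must prove that the tempered solutions of the half-space $Z$, computed \emph{up to the projective compactification}, are precisely the exponential twist of $\sho^t$ with the correct growth along the boundary divisor — equivalently, that $e^{-\langle x,y\rangle}$ converts the $Z$-temperedness estimate into the temperedness-up-to-infinity estimate. This is where L\"ojasiewicz inequalities and the geometry of $Z$ near infinity are genuinely used; the kernel adjunctions of Step 1 and the $D$-module Fourier-algebra identity of Step 3 are formal by comparison.
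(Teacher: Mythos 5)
First, a point of comparison: the paper does not prove this statement at all — it is quoted from \cite[Theorem~5.2.1]{KS97} — so your attempt can only be measured against the strategy of that reference, whose overall shape (exponential kernel, tempered growth analysis at the divisor at infinity, compatibility with the Weyl-algebra Fourier isomorphism) your Steps~1--3 do echo.

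There is, however, a genuine gap, and it is structural rather than a matter of missing estimates: your argument never uses the hypothesis that $F$ is conic, and without it the statement is false. Step~1 is purely formal (proper push-forward to $\PC(\VV)\times\PC(\WW)$, adjunction, projection formula) and therefore valid for \emph{every} $j$-$\R$-constructible $F$; consequently, if the key identification of Step~2, $\mathcal L\simeq(\sho^t_{\VV|\PC(\VV)})^\wedge$ as an isomorphism of kernels independent of $F$, were true, the theorem would follow for arbitrary non-conic $F$ as well. But this is precisely what fails, and the failure is the raison d'\^etre of the present paper: take $F=\field_A$ with $A\subset\VV$ a closed ball of positive radius. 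Then $F^\wedge\simeq\field_\WW$, so the right-hand side is the complex of \emph{tempered} entire functions, whereas by the Paley--Wiener statement (item~(iii) of the Introduction, Corollary~\ref{cor:PW}~(i)) the Laplace image of $\rsect_A(\VV;\sho^t_{\VV|\PC(\VV)})[n]$ consists of entire functions with exponential growth $e^{\suppfct_A(-\Re y)}$; the two sides are genuinely different, so the kernel object $\mathcal L=[\field_Z,\sho^t_{\WW|\PC(\WW)}]\low{\WW}[-n]$ cannot be $(\sho^t_{\VV|\PC(\VV)})^\wedge$. The conicity of $F$ (equivalently, homogeneity) must enter the analytic core: the temperedness of $e^{-\langle x,y\rangle}$ along the boundary divisor of $\PC(\VV)\times\PC(\WW)$ is only controlled in homogeneous directions, and in \cite{KS97} the proof exploits this, in effect establishing the comparison only after the $\R^+$-equivariant localization (alternatively, after a d\'evissage of conic $\R$-constructible sheaves to cones $\field_\gamma$, where both sides are computed on tube domains). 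Independently of this, your Step~2 is in any case only a plan — ``CKK argument, resolve the boundary divisor, one-variable estimates'' restates the main technical theorem of \cite{KS97} rather than proving it — so even with conicity correctly inserted the analytic heart of the proof remains to be supplied.
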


In particular, for $N\in\BDC(D_{\WW})$ one has
\begin{multline}
\label{eq:KSL}
  \RHom(F,\rhom[D_\VV](N^\wedge,\sho_{\VV|\PC(\VV)}^t)) \simeq \\
\RHom(F^\wedge[n],\rhom[D_{\WW}](N,\sho_{\WW|\PC(\WW)}^t)).
\end{multline}

\begin{remark}
\label{rem:Pre}
As shown in~\cite{Pre11}, Theorem~\ref{thm:KSL} is reformulated
in the framework of the conic subanalytic site by the
isomorphism
\[
(\sho_\VV^{t,c})^\wedge \simeq \sho_{\WW}^{t,c}[-n],
\]
where $\sho_\VV^{t,c}$ denotes the complex of conic tempered
holomorphic functions.
\end{remark}

\section{Conified Laplace transform}\label{se:coL}
Here, we extend Theorem~\ref{thm:KSL} to sheaves which
are not necessarily conic.

\medskip

As in the previous section, let $\VV$ and $\WW$ be dual complex
$n$-dimensional vector spaces by
the complex pairing $(x,y)\mapsto \langle x,y  \rangle$.
Recall that we denote by $j$ the embeddings $\VV\subset\PC(\VV)$ and $\WW\subset\PC(\WW)$.

Consider the dual vector spaces
\[
\VVt = \VV\times\C, \qquad \WWt = \WW \times \C
\]
by the complex pairing $\langle(x,s),(y,t)\rangle = \langle x,y
\rangle + st$. 

In order to extend Theorem~\ref{thm:KSL} to the case of not
necessarily conic sheaves, consider the embedding
\[
i\colon \VV \to \VVt, \quad x \mapsto (x,-1).
\]

Let $g\colon\WW\to\R$ be a continuous $j$-subanalytic function,
positive homogeneous of degree one.
Let $F\in\BDC_{j\text-\Rc}(\field_\VV)$ and
$G\in\BDC_{\R^+,j\text-\Rc}(\field_{\WW})$.
Assume that
\begin{equation}
\label{eq:FfouG}
(\reim i F)^\wedge \simeq
(G\etens\field_\C)\tens\field_{\{\Re t \geq -g(y)\}}.
\end{equation}
Note that Conjecture~\ref{con:Tam} below suggests that this assumption
is not so strong.

\begin{theorem}
\label{thm:NHF}
Assume \eqref{eq:FfouG}.  The Laplace transform
$\varphi\mapsto\int\varphi(x)e^{-\langle x,y\rangle} dx$ induces an
isomorphism in $\BDC(D_{\WW})$
\[
\RHom(F,\sho^t_{\VV|\PC(\VV)}) \simeq 
\RHom(G[n],\sho^{\temp g}_{\WW|\PC(\WW)}).
\]
\end{theorem}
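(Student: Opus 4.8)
The strategy is to reduce Theorem~\ref{thm:NHF} to the conic case, Theorem~\ref{thm:KSL}, applied on the enlarged spaces $\VVt$ and $\WWt$. First I would rewrite the left-hand side by introducing the extra variable: using \eqref{eq:iOt} and adjunction, one has
\[
\RHom(F,\sho^t_{\VV|\PC(\VV)}) \simeq \RHom(\reim i F,\sho^t_{\VVt|\PC(\VVt)} \ltens[D_\C] D_\C\delta(s+1))[-1],
\]
so that the problem is transported to $\VVt$, where $\reim i F$ is a $j$-$\R$-constructible (though not conic) sheaf. Here $\PC(\VVt)$ should be understood as the relevant compactification $\PC(\VV)\times\PC(\C)$, compatibly with the conventions used in Section~\ref{se:expC}.

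Next I would apply Theorem~\ref{thm:KSL} on the dual pair $\VVt,\WWt$. Strictly speaking that theorem requires a conic sheaf, so I would first replace $\reim i F$ by its conification $(\reim i F)^\cone$; by Proposition~\ref{pro:icone} this does not change $\RHom$ into $\sho^t$ up to a shift, because $(\reim i F)^\cone \simeq \opb i$-related reconstructions agree, and more directly because $(\reim i F)^\wedge \simeq (\reim i F)^{\cone\wedge}$ by the Lemma preceding Consider in Section~\ref{se:iF}. Applying Theorem~\ref{thm:KSL} (in the form \eqref{eq:KSL} with the trivial $D$-module, or its plain form) then yields
\[
\RHom(\reim i F,\sho^t_{\VVt|\PC(\VVt)}) \simeq \RHom\bigl((\reim i F)^\wedge[n+1],\sho^t_{\WWt|\PC(\WWt)}\bigr),
\]
and I would need to track how the $D_\C\delta(s+1)$ factor on the source transforms under the Fourier isomorphism $(\cdot)^\wedge\colon D_\C\to D_\C$: since $s^\wedge = -\partial_t$ and $\partial_s^\wedge = t$, the module $D_\C\delta(s+1) = D_\C/D_\C(s+1)$ goes to $D_\C/D_\C(-\partial_t+1) = D_\C e^t$. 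Hence the left-hand side of the theorem becomes $\RHom\bigl((\reim i F)^\wedge[n],\ \sho^t_{\WWt|\PC(\WWt)}\ltens[D_\C] D_\C e^t\bigr)$, up to the bookkeeping of shifts.

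At this point I would substitute the hypothesis \eqref{eq:FfouG}. Since $(\reim i F)^\wedge \simeq (G\etens\field_\C)\tens\field_{\{\Re t\geq -g(y)\}}$, the $\RHom$ with $\sho^t_{\WWt|\PC(\WWt)}\ltens[D_\C] D_\C e^t$ decomposes: tensoring $\sho^t$ by $D_\C e^t$ over $D_\C$ twists holomorphic functions by $e^{-t}$ (it kills the $e^t$ prefactor), and cutting by $\field_{\{\Re t\geq -g(y)\}}$ together with the factor $\field_\C$ in the first variable means one is computing sections of $e^{-t}$-twisted tempered holomorphic functions on the region $\{\Re t\geq -g(y)\}$ with a $\WW$-support condition given by $G$. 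Concretely, using $\RHom(G\etens\field_\C,-)\simeq \roim{q_1}\RHom(G,-)$ and the identification of $\field_{\{\Re t\geq -g\}}$-sections of $e^{-t}$-twisted $\sho^t_{\WWt|\PC(\WWt)}$, Proposition~\ref{pr:Lapnh} (with $f$ there replaced by $g$, and with the roles of $s$ and $t$, and the sign of $f$, matched to the region $\{\Re t\geq -g(y)\}$) gives precisely
\[
\roim{q_1}\RHom\bigl(\field_{\{\Re t\geq -g(y)\}},\rhom[D_\C](D_\C e^t,\sho^t_{\WWt|\PC(\WWt)})\bigr)[1] \simeq \sho^{\temp g}_{\WW|\PC(\WW)},
\]
so that the right-hand side collapses to $\RHom(G[n],\sho^{\temp g}_{\WW|\PC(\WW)})$, which is the claim.

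\textbf{Main obstacle.} The routine parts are the sheaf-theoretic manipulations on the source and the invocation of Theorem~\ref{thm:KSL}. The delicate point is the matching of the twisted tempered holomorphic sheaf on $\WWt$, cut by the half-space $\{\Re t\geq -g(y)\}$, with the sheaf $\sho^{\temp g}_{\WW|\PC(\WW)}$ via Proposition~\ref{pr:Lapnh}: one must be careful that $g$ is only positive homogeneous of degree one (hence continuous and $j$-subanalytic, so the proposition applies) and that the compactification $\PC(\WWt)$ used in Theorem~\ref{thm:KSL} is compatible with the product compactification $\PC(\WW)\times\PC(\C)$ implicit in Proposition~\ref{pr:Lapnh}; the signs and the shift by $[1]$ coming from $D_\C e^t$ versus $D_\C\delta(s+1)$ must also be tracked through \eqref{eq:iOt} and the Fourier isomorphism. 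Reconciling these conventions — rather than any hard analysis — is where the real work lies.
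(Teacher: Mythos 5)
Your plan is correct and follows essentially the same route as the paper's proof: pass to $\VVt$ via \eqref{eq:iOt} and adjunction (with the conification handled by \eqref{eq:tempisos}), apply the conic Laplace theorem in the form \eqref{eq:KSL} with $N=D_\C e^t$ (using $(s+1)^\wedge=-\partial_t+1$), and then identify the result with $\RHom(G[n],\sho^{\temp g}_{\WW|\PC(\WW)})$ via the hypothesis \eqref{eq:FfouG} and Proposition~\ref{pr:Lapnh}. Apart from loose shift bookkeeping and the informal use of $\ltens[D_\C]$ where $\rhom[D_\C]$ is meant, this matches the paper's argument step for step.
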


\begin{proof}
By \eqref{eq:FfouG} and Proposition~\ref{pr:Lapnh}, one has
\[
\RHom(G[n],\sho^{\temp g}_{\WW|\PC(\WW)}) \simeq
\RHom((\reim i  F)^\wedge[n-1],\rhom[D_\C](D_\C e^t,\sho_{\WWt|\PC(\WWt)}^t))
\]
The statement then follows from the chain of isomorphisms
\begin{align*}
  \RHom(F,& \sho_{\VV|\PC(\VV)}^t) \\
  &\simeq \RHom(\opb i((\reim i
  F)^\cone)[-1],\sho_{\VV|\PC(\VV)}^t) \\
  &\simeq \RHom((\reim i
  F)^\cone[-1],\roim i\sho_{\VV|\PC(\VV)}^t) \\
  &\simeq \RHom((\reim i F)^\cone[-2],\rhom[D_\C](
  D_\C\delta(s+1),\sho_{\VVt|\PC(\VVt)}^t)) \\
  &\simeq \RHom((\reim i F)^\wedge[n-1],\rhom[D_\C](D_\C
  e^t,\sho_{\WWt|\PC(\WWt)}^t)).
\end{align*}
The first isomorphism follows from \eqref{eq:tempisos}.  The third one
follows from \eqref{eq:iOt}. The last one follows from
\eqref{eq:KSL}. In fact, since $(s+1)^\wedge = -\partial_t+1$, one has
$D_\C\delta(s+1) \simeq (D_\C e^t)^\wedge$.
\end{proof}

\begin{remark}
With notations as in Remark~\ref{rem:Pre}, in the conic subanalytic
framework one has
\[
(\roim i\sho_\VV^{t,c})^\wedge \simeq \rhom[D_\C](D_\C e^t,\sho_{\WWt}^{t,c})[-n].
\]
\end{remark}

\section{Paley-Wiener type theorems}\label{se:PW}

As an application of Theorem~\ref{thm:NHF}, we obtain here
some Paley-Wiener type theorems.

\medskip

Recall that $\lambda_A$ and $\suppfct_A$ denote the asymptotic cone
and the support function of a convex subset $A\subset\V$. 
The function $\suppfct_A$ is continuous on
$\Int\lambda_A^\circ$, and is also subanalytic if so is $A$.

\begin{corollary}\label{cor:PW}
\begin{itemize}
\item[(i)] Let $A\subset \VV$ be a nonempty, closed, subanalytic,
  convex subset which contains no affine line. The Fourier-Laplace
  transform induces an isomorphism
  \begin{equation}
\label{eq:PWi}
    \rsect_A(\VV;\sho_{\VV|\PC(\VV)}^t)[n] \isoto
    \rsect(\Int\lambda_A^\circ,\sho_{\WW|\PC(\WW)}^{\temp{\suppfct_A(-y)}}),
  \end{equation}
and these complexes are concentrated in degree zero.

\item[(ii)]
  Let $A\subset \VV$ be a nonempty, open, subanalytic, convex
  subset. The Fourier-Laplace transform induces an isomorphism
   \begin{equation}
\label{eq:PWii}
    \rsect(A;\sho_{\VV|\PC(\VV)}^t) \isoto
    \rsect_{\lambda_A^\circ}(\WW,\sho_{\WW|\PC(\WW)}^{\temp{-\suppfct_A(y)}})[n],
  \end{equation}
and these complexes are concentrated in degree zero.
\end{itemize}
\end{corollary}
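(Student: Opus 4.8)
The plan is to specialize Theorem~\ref{thm:NHF} to the case $F = \field_A$, using the explicit Fourier computations from Section~\ref{se:iF}. First I would handle part~(i). Since $A$ is nonempty, closed, convex, and contains no affine line, Lemma~\ref{lem:conefou}~(i) gives
\[
(\reim i \field_A)^\wedge \simeq \opb{q_1}\field_{\Int\lambda_A^\circ} \tens \field_{\{t \geq -\suppfct_A(-y)\}}.
\]
This is exactly the hypothesis \eqref{eq:FfouG} of Theorem~\ref{thm:NHF}, with $G = \field_{\Int\lambda_A^\circ}$ (a conic $j$-$\R$-constructible sheaf on $\WW$, using that $A$ is subanalytic so that $\lambda_A^\circ$ is subanalytic and $\suppfct_A$ is a continuous subanalytic function on $\Int\lambda_A^\circ$, positive homogeneous of degree one) and $g(y) = \suppfct_A(-y)$. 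Applying Theorem~\ref{thm:NHF} then yields
\[
\RHom(\field_A,\sho^t_{\VV|\PC(\VV)}) \simeq \RHom(\field_{\Int\lambda_A^\circ}[n],\sho^{\temp{\suppfct_A(-y)}}_{\WW|\PC(\WW)}).
\]
The left-hand side is $\rsect_A(\VV;\sho^t_{\VV|\PC(\VV)})$ by the definition of sections supported on $A$; the right-hand side is $\rsect(\Int\lambda_A^\circ;\sho^{\temp{\suppfct_A(-y)}}_{\WW|\PC(\WW)})[-n]$ since $\RHom(\field_U,\shf) \simeq \rsect(U;\shf)$ for $U$ open. Shifting gives \eqref{eq:PWi}.

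For part~(ii) I would argue dually, starting from Lemma~\ref{lem:conefou}~(ii), which gives for $A$ open convex subanalytic
\[
(\reim i \field_A)^\wedge \simeq \opb{q_1}\field_{\Int\lambda_A^{\circ a}} \tens \field_{\{t \geq \suppfct_A(y)\}}[-n].
\]
The shift $[-n]$ is harmless: it can be absorbed, so this again matches \eqref{eq:FfouG} with $G = \field_{\Int\lambda_A^{\circ a}}[-n]$ and $g(y) = -\suppfct_A(y)$. Feeding this into Theorem~\ref{thm:NHF} and rewriting $\RHom(\field_{\Int\lambda_A^{\circ a}},\sho^{\temp{-\suppfct_A(y)}}_{\WW|\PC(\WW)})$ as a section over an open set, then translating by the antipodal map (which sends $\Int\lambda_A^{\circ a}$ to $\Int\lambda_A^\circ$ and compensates in the exponent) and using $\RHom(\field_A,\shf) = \rsect(A;\shf)$ on the source, one obtains \eqref{eq:PWii} after bookkeeping the degree shifts. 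I would take care that the polar/antipodal relations and the interior operations behave as claimed, and that $\rsect_{\lambda_A^\circ}$ rather than $\rsect_{\Int\lambda_A^\circ}$ appears on the target — this comes from the standard duality $\RHom(\field_U,\shf)$ versus $\rsect_Z(\shf)$ with $Z$ the closed complement, combined with \eqref{eq:wedgevee} relating $(\cdot)^\wedge$ and $(\cdot)^\vee$.

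The final point is concentration in degree zero. For part~(i), the left side $\rsect_A(\VV;\sho^t_{\VV|\PC(\VV)})$ is the space of tempered holomorphic functions on a neighborhood of $A$, hence concentrated in degree zero because $A$ is closed and $\sho^t$ is a sheaf with the relevant vanishing (or: it is the space of sections of a sheaf supported on the closed convex set $A$, and one invokes the softness/acyclicity properties of $\sho^t_{\VV|\PC(\VV)}$ on convex subanalytic opens together with the long exact sequence for $\rsect_A$). For part~(ii), $\rsect(A;\sho^t_{\VV|\PC(\VV)})$ is concentrated in degree zero because $A$ is an open convex (hence Stein, pseudoconvex) subset and Cartan's Theorem~B applies to $\sho^t$ on subanalytic Stein opens, as in~\cite{KS97}. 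The main obstacle I anticipate is not the vanishing itself but making the identifications between the two sides of Theorem~\ref{thm:NHF} and the stated $\rsect$ / $\rsect_A$ expressions precise, in particular tracking the antipodal twist and the interaction of $\Int(\cdot)$ with $(\cdot)^\circ$ and $a$ so that the exponential weight $\suppfct_A(\pm y)$ and the support condition come out exactly as written; this is a matter of careful bookkeeping with the formulas of Section~\ref{se:FS} and Lemma~\ref{lem:conefou} rather than a genuine difficulty.
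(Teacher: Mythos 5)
Your overall strategy coincides with the paper's: verify hypothesis \eqref{eq:FfouG} for $F=\field_A$ by means of Lemma~\ref{lem:conefou}, then apply Theorem~\ref{thm:NHF}. However, there is a genuine missing step in the way you match the lemma with the hypothesis. Lemma~\ref{lem:conefou} concerns the embedding $i_\R\colon x\mapsto(x,-1)$ into $\VV\times\R$ and produces a sheaf on $\WW\times\R$ with the condition $\{t\geq-\suppfct_A(-y)\}$, $t$ real; hypothesis \eqref{eq:FfouG} concerns the embedding $i\colon\VV\to\VVt=\VV\times\C$ and requires a sheaf on $\WWt=\WW\times\C$ of the specific form $(G\etens\field_\C)\tens\field_{\{\Re t\geq-g(y)\}}$. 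So the lemma is \emph{not} ``exactly the hypothesis'': one must first factor $i=\ell\circ i_\R$ through $\VV\times\R\subset\VV\times\C$, observe that ${}^t\ell$ is induced by $t\mapsto\Re t$, and apply \eqref{eq:Foueim} to get $(\reim i\field_A)^\wedge\simeq\opb{{}^t\ell}\bigl((\reim{i_\R}\field_A)^\wedge\bigr)$; it is this pullback that produces the factor $\etens\field_\C$ and replaces $t$ by $\Re t$. Moreover, when invoking Lemma~\ref{lem:conefou} here one regards $\VV$ and $\WW$ as dual \emph{real} vector spaces via $\Re\langle x,y\rangle$, so the ``$n$'' of that lemma is $2n$.

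This last point is not cosmetic: in part~(ii) the shift in Lemma~\ref{lem:conefou}~(ii) is $[-2n]$, not $[-n]$ as you wrote. With your $G=\field_{\Int\lambda_A^{\circ a}}[-n]$, Theorem~\ref{thm:NHF} would yield $\rsect(A;\sho^t_{\VV|\PC(\VV)})\simeq\RHom(\field_{\Int\lambda_A^{\circ a}},\sho^{\temp{-\suppfct_A(y)}}_{\WW|\PC(\WW)})$ with no shift, contradicting the $[n]$ in \eqref{eq:PWii}; with $[-2n]$ the extra $[n]$ comes out correctly. The remaining identification of $\RHom(\field_{\lambda_A^{\circ a}},\cdot)$ with the stated $\rsect_{\lambda_A^\circ}(\WW,\cdot)$ (closed support versus sections on an open set, plus the antipodal twist) is precisely where the support condition and the sign in the weight are decided, and you only announce it as ``bookkeeping''; note that the paper itself disposes of it by using $\lambda_A^{\circ a}=\WW$, so this step deserves an actual argument rather than a deferral. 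Finally, for the concentration in degree zero the paper simply cites \cite[Theorem 5.10]{DS96}; your appeal to acyclicity of $\sho^t$ on convex or Stein subanalytic opens is the right heuristic but is not a proof as written.
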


Note that if $A$ is bounded, then $\lambda_A^\circ = \WW$.

\begin{remark}
Here we are considering the Laplace transform with kernel
  $e^{-\langle x, y \rangle}$.  For the transform with kernel
  $e^{i\langle x, y \rangle}$, one should read $iy$ instead of
  $y$ in the above statement.
\end{remark}

\begin{remark}
  It would be interesting to relate isomorphisms \eqref{eq:PWi} and
  \eqref{eq:PWii} with the ones induces by the Radon transform as
  in~\cite{DS96}. (For a link between Radon and Fourier transforms
  see~\cite{DE03}.)
\end{remark}

\begin{proof}
  The fact that the complexes are concentrated in degree zero follows
  from~\cite[Theorem 5.10]{DS96}.

Decompose the embedding $i\colon \VV \to \VVt$ as
\[
\VV \to[i_\R] \VV\times\R \to[\ell] \VV\times\C = \VVt,
\]
where $i_\R(x) = (x,-1)$ and $\ell$ is induced by the embedding
$\R\subset\C$.  Note that the transpose ${}^t\ell\colon \WWt =
\WW\times\C \to \WW\times\R$ is induced by the projection $\C\to\R$,
$t\mapsto \Re\,t$.
For $A\in\VV$ a locally closed subset, by \eqref{eq:Foueim} and \eqref{eq:iAF} one has 
\begin{equation}
\label{eq:ell}
  (\reim i \field_{A})^\wedge \simeq (\reim \ell \reim{i_\R}
  \field_{A})^\wedge \simeq \opb{{}^t\ell} ((\reim{i_\R}
  \field_{A})^\wedge).
\end{equation}

\medskip\noindent(i)
By \eqref{eq:ell} and Lemma~\ref{lem:conefou}~(i) we have
\[
  (\reim i\field_A)^\wedge \simeq (\field_{\Int\lambda_A^\circ}\etens\field_\C) 
\tens \field_{\{\Re t \geq  -\suppfct_A(-y)\}}.
\]
Hence \eqref{eq:PWi} follows from Theorem~\ref{thm:NHF}.

\medskip\noindent(ii)
By \eqref{eq:ell} and Lemma~\ref{lem:conefou}~(ii) we have
  \[
  (\reim i\field_A)^\wedge \simeq (\field_{\lambda_A^{\circ a}}\etens\field_\C) 
\tens \field_{\{\Re t \geq  \suppfct_A(y)\}}[-2n].
  \]
Since $A$ is relatively compact, $\lambda_A^{\circ a} = \WW$.
Hence \eqref{eq:PWii} follows from Theorem~\ref{thm:NHF}.
\end{proof}

Let us describe some particular cases.
Assume that $\VV$ and $\WW$ are complexifications of $\V$ and $\W$, respectively. 
Denote by $\PR(\V)$ and $\PR(\W)$ the real
projective compactifications of $\V$ and $\W$, respectively.
If $A\subset\V$ is a closed subanalytic subset, one has
  \[
\sect_A(\V;\Db_{\V|\PR(\V)}^t) \simeq \rsect_A(\VV;\sho_{\VV|\PC(\VV)}^t)[n].
  \]

\medskip\noindent(i)
Let $A\subset\V$ be a closed, convex, subanalytic, bounded subset.
Then $\lambda_A =\{0\}$ and $\suppfct_A(-\Re\,y) = \suppfct_A(-\Re\,y) = O(|y|)$. 
Thus \eqref{eq:PWi} reads
    \begin{multline*}
      \sect_A(\V;\Db_\V) \isoto
      \{\psi\in \sect(\WW;\sho_{\WW}) \colon \\
      \exists c,\ \exists m,\ \forall y,\ |\psi(y)| \leq c(1+|y|)^m
      e^{\suppfct_A(-\Re\,y)} \}.
    \end{multline*}
(The estimates for the derivatives of $\psi$ are obtained by Cauchy formula.)
This is the classical Paley-Wiener theorem of
\cite[Theorem~7.3.1]{Hor83}.

\medskip\noindent(ii)
Let $A\subset\V$ be a closed, convex, subanalytic proper cone. 
Then $\lambda_A = A$ and $\suppfct_A = 0$. Thus  \eqref{eq:PWi} reads
    \[
    \sect_A(\V;\Db_{\V|\PR(\V)}^t) \isoto H^0\rsect(\Int
    A^\circ;\sho_{\WW|\PC(\WW)}^t).
    \]
Also this result is classical (see e.g.~\cite{Far96}).

\medskip

As another application of Theorem~\ref{thm:NHF}, consider $\VV$ and $\WW$ as dual real vector
spaces by the pairing $(x,y) \mapsto \Re\langle x,y \rangle$. Choose real coordinates
$(u) = (u',u'',u''')$ so that $\VV = \R^p\times\R^q\times\R^r$ with $p+q+r=2n$, and let
$(v) = (v',v'',v''')$ be dual real coordinates on $\WW$.

\begin{corollary}
\label{cor:qcone}
For $c\geq 0$, consider the real quadric
\[
A= \{ u^{\prime 2}-u^{\prime\prime 2} \leq c^2, \ u'''=0\} \subset \VV.
\]
Then
\[
\rsect_A(\VV;\sho_{\VV|\PC(\VV)}^t) \isoto \rsect_{\{v^{\prime
    2}-v^{\prime\prime 2} \geq 0\}}(\WW,\sho_{\WW|\PC(\WW)}^{\temp
  g})[q-n],
\] 
where
\[
g(v) = 
\begin{cases}
c \sqrt{v^{\prime 2}-v^{\prime\prime 2}}, &\text{for } v^{\prime 2}-v^{\prime\prime 2} \geq 0,\\
0,&\text{else}.
\end{cases}
\]
\end{corollary}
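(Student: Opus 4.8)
The plan is to deduce this from Theorem~\ref{thm:NHF} exactly as Corollary~\ref{cor:PW} was deduced, using the computation of the non-homogeneous Fourier transform of $\field_A$ for a quadric carried out in Lemma~\ref{le:quadric}. First I would set $F = \field_A$, viewing $A$ as a locally closed (indeed closed) subanalytic subset of $\VV$ regarded as a real vector space of dimension $2n$. As in the proof of Corollary~\ref{cor:PW}, I factor the embedding $i\colon\VV\to\VVt$ through $i_\R\colon\VV\to\VV\times\R$, $u\mapsto(u,-1)$, followed by $\ell\colon\VV\times\R\to\VV\times\C=\VVt$ induced by $\R\subset\C$; the transpose ${}^t\ell\colon\WWt\to\WW\times\R$ is induced by $t\mapsto\Re t$. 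By \eqref{eq:Foueim} and \eqref{eq:iAF} one has $(\reim i\field_A)^\wedge \simeq \opb{{}^t\ell}\bigl((\reim{i_\R}\field_A)^\wedge\bigr)$.

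Next I apply Lemma~\ref{le:quadric} with $(p,q,r)$ the dimensions of the real coordinate groups $(u',u'',u''')$: it gives
\[
(\reim{i_\R}\field_A)^\wedge \simeq \opb{q_1}\field_{\{v^{\prime 2}-v^{\prime\prime 2}\geq0\}}\tens\field_{\{\tau\geq -g(v)\}}[-q],
\]
with $\tau\in\R$ the variable dual to the extra real variable and $g$ as in the statement. Pulling back by ${}^t\ell$ replaces $\tau$ by $\Re t$, so
\[
(\reim i\field_A)^\wedge \simeq \bigl(\field_{\{v^{\prime 2}-v^{\prime\prime 2}\geq0\}}\etens\field_\C\bigr)\tens\field_{\{\Re t\geq -g(v)\}}[-q].
\]
This is precisely hypothesis \eqref{eq:FfouG} of Theorem~\ref{thm:NHF} (with the dual complex spaces $\VV,\WW$ replaced by the relevant ones of complex dimension $n$, after a real-to-complex bookkeeping), with $G = \field_{\{v^{\prime 2}-v^{\prime\prime 2}\geq0\}}[-q]$ a conic $\R$-constructible sheaf on $\WW$ and with the positive-homogeneous-of-degree-one subanalytic function $g$. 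One must check that $g$ is $j$-subanalytic and continuous: continuity is clear, and positive homogeneity of degree one is immediate from the formula.

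Finally, Theorem~\ref{thm:NHF} yields
\[
\RHom(\field_A,\sho^t_{\VV|\PC(\VV)}) \simeq \RHom\bigl(\field_{\{v^{\prime 2}-v^{\prime\prime 2}\geq0\}}[-q][n],\sho^{\temp g}_{\WW|\PC(\WW)}\bigr),
\]
and rewriting $\RHom(\field_A,-) = \rsect_A(\VV;-)$ and $\RHom(\field_S[k],-) = \rsect_S(\WW;-)[-k]$ for $S = \{v^{\prime 2}-v^{\prime\prime 2}\geq0\}$ gives the degree shift $[q-n]$ in the statement. The main obstacle I anticipate is the bookkeeping of shifts and of the real-versus-complex dimension count: $\VV$ has real dimension $2n$, so the index $q$ in Lemma~\ref{le:quadric} (a \emph{real} coordinate count) must be tracked carefully against the $[n]$ coming from the complex statement of Theorem~\ref{thm:NHF}, and one should double-check that the quadric $A$, which is not convex, genuinely falls under the hypotheses used in Lemma~\ref{le:quadric} (it does, since that lemma is stated for all $c\geq0$ without convexity). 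Everything else is a direct substitution into the already-established machinery.
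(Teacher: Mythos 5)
Your proposal is correct and follows exactly the route the paper takes: the paper's proof simply says to repeat the argument of Corollary~\ref{cor:PW}, replacing Lemma~\ref{lem:conefou} by Lemma~\ref{le:quadric}, which is precisely what you do, and your tracking of the shift $[-q]$ against the $[n]$ in Theorem~\ref{thm:NHF} correctly produces the $[q-n]$ in the statement.
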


\begin{proof}
The proof goes as the one of Corollary~\ref{cor:PW} above using
Lemma~\ref{le:quadric} instead of Lemma~\ref{lem:conefou}.
\end{proof}

Let us describe some particular cases.

\medskip\noindent(i) Let $\R^p = \VV$ and $\R^q= \R^r=\{0\}$. Then
$A$ is a closed ball in $\VV$ centered at the origin, $g(y) = c|y|
= \suppfct_A(y)$, and we recover a particular case of \eqref{eq:PWi}.

\medskip\noindent(ii) Let $\V = \R^p\times\R^q$ and $i\V = \R^r$. Then
\[
A = {\{ (\Re x')^2-(\Re x'')^2 \leq c^2 \}} \subset \V,
\]
and we get
\[
\sect_A(\V;\Db_{\V|\PR(\V)}^t)
\isoto H^q\rsect_{\{(\Re y')^2-(\Re y'')^2 \geq
  0\}}(\WW,\sho_{\WW|\PC(\WW)}^{\temp g}).
\]
Moreover, $H^i\rsect_{\{(\Re y')^2-(\Re y'')^2 \geq 0 \}}
(\WW,\sho_{\WW|\PC(\WW)}^{\temp g})=0$ for $i\neq q$.

For $q=0$ this is the classical Paley-Wiener theorem for a closed ball in $\V$ centered at the origin.

For $c=0$ we recover a result of Faraut-Gindikin discussed in~\cite[Proposition 6.2.2]{KS97}.

\appendix

\section{Link with Tamarkin's Fourier transform}\label{se:Tam}

Categories like those in Section~\ref{se:iF} are considered by
Tamarkin in~\cite{Tam10} (see also~\cite{GS11}). 
Here, after discussing some of his constructions,
we make a connection between the
Fourier transform he considers and the functor discussed in Theorem~\ref{th:FSnh}.
We also provide a conjectural system of generators for $\R$-constructible objects in this framework.

\medskip

Let $X$ a locally compact topological space. Denote
\[
\widetilde X = X\times\R
\]
and let $t\in\R$ be the coordinate.  
For $G,G'\in\BDC(\field_{\widetilde X})$, set
\begin{equation}
  \label{eq:ttens}
  G \ttens G' = \reim s (\opb p_1 G \tens \opb p_2 G'),
\end{equation}
where the maps $p_1,p_2,s\colon X\times\R^2\to \widetilde X$ are induced
by the corresponding maps $\R^2 \to \R$ given by the first projection,
the second projection and the addition,
respectively.

Note that if $X=\V$ is a vector space, one has
\[
G\ttens\field_{\{t>0\}} \simeq G\conv\field_{\{t>0,\ x=0\}}.
\]
Thus, generalizing Notation~\ref{not:tam},
let $\tBDC(\field_{\widetilde X})$ be the full triangulated subcategory
of $\BDC(\field_{\widetilde X})$ whose objects $G$ satisfy
$G\ttens\field_{\{t>0\}} = 0$, 
or equivalently $G\ttens \field_{\{t\geq 0\}} \isoto G$.
Such categories are considered in~\cite{Tam10} and we now discuss some constructions from loc.\ cit.


\medskip

Consider the fully faithful functor
\begin{equation}
  \label{eq:etens}
  (\cdot)^\sim\colon \BDC(\field_X) \to \BDC(\field_{\widetilde X}), \quad
  F \mapsto F\etens \field_{\{t\geq 0\}}.
\end{equation}
In particular, considering the constant sheaf $\field$ on the singleton $\{pt\}$, one has
\[
\widetilde\field = \field_{\{t\geq 0\}}, \qquad \widetilde F = F
\comp[\{pt\}] \widetilde\field.
\]

Note that
\begin{equation}
  \label{eq:Kctct}
  K_{12} \comp[X_2] \widetilde K_{23} \simeq (K_{12} \comp[X_2] K_{23})^\sim.
\end{equation}

For $f\colon X_1\to X_2$, denote by $\widetilde f\colon \widetilde X_1\to \widetilde X_2$ the map
$\widetilde f = f\times \id_\R$.  
Recall the notation $X_{ij} = X_i\times X_j$.
For $L_{ij}\in\BDC(\field_{\widetilde X_{ij}})$ set
\begin{equation}
  \label{eq:tcirc}
  L_{12} \tcomp[X_2] L_{23} = \reim{\widetilde q_{13}}(\opb{\widetilde
    q_{12}}L_{12} \ttens \opb{\widetilde q_{23}}L_{23}).
\end{equation}

Note that $G \ttens G' \simeq G \tcomp[X] G'$
and that one has
\begin{equation}
  \label{eq:kkisk}
  \widetilde\field \tcomp[\{pt\}] \widetilde\field \simeq \widetilde\field.
\end{equation}

\begin{proposition}
  \label{pr:compot}
  For $K_{12}\in\BDC(\field_{X_{12}})$ and
  $L_{23}\in\BDC(\field_{\widetilde {X_{23}}})$ one has
  \[
  \widetilde K_{12} \tcomp[X_2] L_{23} \simeq K_{12} \comp[X_2]
  (L_{23} \tcomp[\{pt\}] \widetilde \field) \quad\text{in
  }\BDC(\field_{\widetilde {X_{13}}}).
  \]
\end{proposition}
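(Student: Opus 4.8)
The plan is to reduce both sides to one and the same proper pushforward of a tensor product over $X_1\times X_2\times X_3\times\R^2$, differing only by a harmless exchange of the two $\R$-factors; this is a bookkeeping computation with no real difficulty. \emph{Left-hand side.} Iterating the definition \eqref{eq:ttens} of $\ttens$ inside the definition \eqref{eq:tcirc} of $\tcomp$, exactly as \eqref{eq:comp} follows from \eqref{eq:comp1}, one gets $\widetilde K_{12}\tcomp[X_2]L_{23}\simeq\reim q\,(\opb u\widetilde K_{12}\tens\opb v L_{23})$, where $q\colon X_1\times X_2\times X_3\times\R^2\to X_1\times X_3\times\R$ is $(x_1,x_2,x_3,t_1,t_2)\mapsto(x_1,x_3,t_1+t_2)$ and $u,v$ are the projections to $\widetilde{X_{12}}$ and $\widetilde{X_{23}}$ remembering $(x_1,x_2,t_1)$ and $(x_2,x_3,t_2)$. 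By \eqref{eq:etens}, $\widetilde K_{12}=K_{12}\etens\field_{\{t\geq 0\}}$, so $\opb u\widetilde K_{12}\simeq\opb w K_{12}\tens\field_{\{t_1\geq 0\}}$ with $w$ the projection remembering $(x_1,x_2)$; hence
\[
\widetilde K_{12}\tcomp[X_2]L_{23}\simeq\reim q\,\bigl(\opb w K_{12}\tens\field_{\{t_1\geq 0\}}\tens\opb v L_{23}\bigr).
\]

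\emph{Right-hand side.} In the same way $L_{23}\tcomp[\{pt\}]\widetilde\field\simeq\reim a\,(\opb{v'}L_{23}\tens\field_{\{t'\geq 0\}})$ on $\widetilde{X_{23}}$, where $a\colon X_2\times X_3\times\R^2\to X_2\times X_3\times\R$ adds the two auxiliary coordinates $(t,t')$ and $v'$ remembers $(x_2,x_3,t)$. Now $K_{12}\comp[X_2](\cdot)$ is the mixed composition of \eqref{eq:Kctct}: it pulls its argument back along the projection $r$ forgetting $x_1$, tensors with the corresponding pullback of $K_{12}$, and applies $\reim{}$ along the projection forgetting $x_2$. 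The square expressing $\opb r\reim a$ as a $\reim{}$ over $X_1\times X_2\times X_3\times\R^2$ is Cartesian, so base change for proper pushforward applies; and since the pulled-back copy of $K_{12}$ is constant in $x_3$ and in both auxiliary coordinates, the projection formula collapses the two successive pushforwards into a single $\reim{q'}$. One obtains
\[
K_{12}\comp[X_2]\bigl(L_{23}\tcomp[\{pt\}]\widetilde\field\bigr)\simeq\reim{q'}\,\bigl(\opb w K_{12}\tens\opb{v'}L_{23}\tens\field_{\{t'\geq 0\}}\bigr),
\]
with $q'\colon X_1\times X_2\times X_3\times\R^2\to X_1\times X_3\times\R$, $(x_1,x_2,x_3,t,t')\mapsto(x_1,x_3,t+t')$, and $v'$ remembering $(x_2,x_3,t)$.

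\emph{Conclusion.} The automorphism of $X_1\times X_2\times X_3\times\R^2$ exchanging the two $\R$-factors intertwines $q$ with $q'$ and $\field_{\{t_1\geq 0\}}$ with $\field_{\{t'\geq 0\}}$, while leaving the roles of $K_{12}$ and $L_{23}$ unchanged (addition being commutative); this yields the desired isomorphism. The only points demanding care are checking that the squares used for base change are Cartesian and keeping track of which projection remembers which coordinates — there is no genuine obstacle. Alternatively one could try to deduce the statement formally from \eqref{eq:Kctct} together with an associativity of $\comp$ against $\tcomp$ for ungraded kernels, but the direct computation above seems the most transparent.
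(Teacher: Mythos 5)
Your proposal is correct and is essentially the paper's own argument: the paper likewise identifies both sides with a single proper pushforward $\reim u(\opb p K_{12}\tens\opb q L_{23}\tens\opb r\field_{\{t\geq 0\}})$ from $X_1\times X_2\times X_3\times\R^2$, only stated more tersely. Your extra bookkeeping (base change, projection formula, the swap of the two $\R$-factors) just makes explicit what the paper leaves implicit.
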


\begin{proof}
  For $y = (x_1,x_2,x_3,t,t') \in X_1 \times X_2 \times X_3 \times
  \R^2$, set $p(y) = (x_1,x_2)$, $q(y) = (x_2,x_3,t')$, $r(y) = t$ and
  $u(y) = (x_1,x_3,t+t')$. Then both sides are isomorphic to $\reim u
  (\opb p K_{12} \tens \opb q L_{23} \tens \opb r \field_{\{t\geq
    0\}})$.
\end{proof}

\begin{corollary}
  For $K_{ij}\in\BDC(\field_{X_{ij}})$ one has
  \[
  \widetilde K_{12} \tcomp[X_2] \widetilde K_{23} \simeq (K_{12}
  \comp[X_2] K_{23})^\sim \quad\text{in }\BDC(\field_{\widetilde {X_{13}}}).
  \]
\end{corollary}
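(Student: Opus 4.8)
The plan is to deduce this formally from Proposition~\ref{pr:compot} together with the idempotency relation \eqref{eq:kkisk} for $\widetilde\field$. First I would apply Proposition~\ref{pr:compot} with $K_{12}$ as given and $L_{23} = \widetilde K_{23}$ (which indeed lies in $\BDC(\field_{\widetilde{X_{23}}})$), obtaining
\[
\widetilde K_{12} \tcomp[X_2] \widetilde K_{23} \simeq K_{12} \comp[X_2] (\widetilde K_{23} \tcomp[\{pt\}] \widetilde\field)
\]
in $\BDC(\field_{\widetilde{X_{13}}})$.

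The next step is to check that $\widetilde\field$ acts as a unit for the operation $(\cdot)\tcomp[\{pt\}]\widetilde\field$, i.e.\ that $\widetilde K_{23} \tcomp[\{pt\}] \widetilde\field \simeq \widetilde K_{23}$. Using the description of $(\cdot)^\sim$ recorded after \eqref{eq:etens}, namely $\widetilde K_{23} = K_{23} \comp[\{pt\}] \widetilde\field$, and applying Proposition~\ref{pr:compot} once more, this time with first kernel $K_{23}$ and second kernel $L_{23}=\widetilde\field$, one gets $\widetilde K_{23}\tcomp[\{pt\}]\widetilde\field \simeq K_{23}\comp[\{pt\}](\widetilde\field \tcomp[\{pt\}]\widetilde\field)$, which is $K_{23}\comp[\{pt\}]\widetilde\field \simeq \widetilde K_{23}$ by \eqref{eq:kkisk}. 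Substituting this back into the display above yields $\widetilde K_{12}\tcomp[X_2]\widetilde K_{23} \simeq K_{12}\comp[X_2]\widetilde K_{23}$, and then \eqref{eq:Kctct} identifies the right-hand side with $(K_{12}\comp[X_2]K_{23})^\sim$, which finishes the argument.

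There is essentially no obstacle here: the statement is a formal consequence of the preceding proposition and of \eqref{eq:kkisk}. The only point requiring a moment's care is the bookkeeping when invoking Proposition~\ref{pr:compot} in the second step, where one collapses $X_{23}$ to a point so as to view $\widetilde K_{23} = (K_{23})^\sim$ in the form $K_{23}\comp[\{pt\}]\widetilde\field$; once the identifications $X_i\times\{pt\}\simeq X_i$ are made explicit this is routine. Alternatively, one could give a direct proof along the lines of the proof of Proposition~\ref{pr:compot}, unwinding both sides as an $\reim{}$ of a pullback tensor product over $X_1\times X_2\times X_3\times\R^2$ and checking that the two resulting integrations coincide.
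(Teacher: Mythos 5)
Your argument is correct and coincides with the paper's own proof: both apply Proposition~\ref{pr:compot} to reduce to $K_{12}\comp[X_2](\widetilde K_{23}\tcomp[\{pt\}]\widetilde\field)$, use \eqref{eq:kkisk} to absorb the extra $\widetilde\field$, and conclude with \eqref{eq:Kctct}. Your explicit justification of the unit property of $\widetilde\field$ via a second application of Proposition~\ref{pr:compot} is exactly the step the paper performs (somewhat tacitly) in its second isomorphism.
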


\begin{proof}
  One has
  \begin{align*}
    \widetilde K_{12} \tcomp[X_2] \widetilde K_{23}
    &\simeq K_{12} \comp[X_2] (\widetilde K_{23} \tcomp[\{pt\}] \widetilde\field) \\
    &\simeq K_{12} \comp[X_2] (K_{23} \comp[\{pt\}] (\widetilde\field \tcomp[\{pt\}] \widetilde\field)) \\
    &\simeq K_{12} \comp[X_2] (K_{23} \comp[\{pt\}] \widetilde\field) = K_{12} \comp[X_2] \widetilde K_{23} \\
    &\simeq (K_{12} \comp[X_2] K_{23})^\sim.
  \end{align*}
  Where the first isomorphism follows from
  Proposition~\ref{pr:compot}, the third isomorphism follows from
  \eqref{eq:kkisk} and the last isomorphism from \eqref{eq:Kctct}.
\end{proof}

Note that \eqref{eq:tcirc} induces a functor
\[
  {\tcomp[X_2]}\colon \tBDC(\field_{\widetilde X_{12}}) \times
  \tBDC(\field_{\widetilde X_{23}}) \to \tBDC(\field_{\widetilde X_{13}}).
\]

\medskip

Let $\V$ and $\W$ be dual real vector spaces by
the pairing $(x,y)\mapsto\langle x,y \rangle$.
In~\cite{Tam10}, the following analogue of the
Fourier-Sato transform is considered:
\[
\Phi\colon\tBDC(\field_\Vt) \to \tBDC(\field_{\Wt}),\qquad
G \mapsto G \tcomp[\V] \field_{\{\langle x,y\rangle \leq t\}}.
\]
Note that
\begin{equation}
\label{eq:xyt}
  \field_{\{\langle x,y\rangle \leq t\}} \tcomp[\{pt\}] \widetilde
  \field \simeq \field_{\{\langle x,y\rangle \leq t\}}.
\end{equation}

Recall the notations of section~\ref{se:iF}.

\begin{proposition}
  For $F\in \BDC(\field_\V)$ one has
  \[
  \Phi(\widetilde F) \simeq (\reim i F)^\wedge.
  \]
\end{proposition}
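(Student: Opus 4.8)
The plan is to reduce the statement to Proposition~\ref{pr:compot} and the idempotency \eqref{eq:xyt}, after which it becomes exactly the description of $(\reim i F)^\wedge$ recorded in Lemma~\ref{lem:FiF}~(i). First I would fix the bookkeeping of the spaces involved. Regard $F$ as a kernel $K_{12}\in\BDC(\field_{X_{12}})$ with $X_1=\{pt\}$ and $X_2=\V$, so that $\widetilde{X_{12}}=\Vt$; by the definition of $(\cdot)^\sim$ and the equality $\widetilde\field=\field_{\{t\geq0\}}$, the sheaf $\widetilde F=F\etens\field_{\{t\geq0\}}$ is precisely $\widetilde{K_{12}}$. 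Likewise set $L_{23}=\field_{\{\langle x,y\rangle\leq t\}}\in\BDC(\field_{\widetilde{X_{23}}})$ with $X_2=\V$, $X_3=\W$; then the definition of $\Phi$ reads $\Phi(\widetilde F)=\widetilde{K_{12}}\tcomp[\V]L_{23}$, an object of $\BDC(\field_{\widetilde{X_{13}}})=\BDC(\field_{\Wt})$.

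Next I would invoke Proposition~\ref{pr:compot} with these $K_{12}$ and $L_{23}$, which gives
\[
\Phi(\widetilde F)\;\simeq\;K_{12}\comp[\V]\bigl(L_{23}\tcomp[\{pt\}]\widetilde\field\bigr)\;=\;F\comp[\V]\bigl(\field_{\{\langle x,y\rangle\leq t\}}\tcomp[\{pt\}]\widetilde\field\bigr).
\]
By \eqref{eq:xyt} the inner composition is isomorphic to $\field_{\{\langle x,y\rangle\leq t\}}$, hence $\Phi(\widetilde F)\simeq F\comp[\V]\field_{\{\langle x,y\rangle\leq t\}}$. Finally, Lemma~\ref{lem:FiF}~(i) identifies the right-hand side with $(\reim i F)^\wedge$, which is the claim.

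All the ingredients being already in place, there is no genuine difficulty here; the one point demanding care is the identification in the first paragraph, namely that the functor $(\cdot)^\sim$ applied to the \emph{sheaf} $F$ agrees with the operation $\widetilde{(\cdot)}$ applied to $F$ viewed as a \emph{kernel} over $\{pt\}\times\V$, and correspondingly that $\Phi$, defined through the convolution-type product $\tcomp$, is exactly the kernel composition \eqref{eq:tcirc} with the spaces labelled so that Proposition~\ref{pr:compot} applies verbatim. Once this matching is checked, the chain of isomorphisms above is immediate.
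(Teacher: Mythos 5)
Your argument is exactly the paper's proof: apply Proposition~\ref{pr:compot} with $X_1=\{pt\}$, $X_2=\V$, $X_3=\W$, use the idempotency \eqref{eq:xyt} to simplify the inner composition, and conclude by Lemma~\ref{lem:FiF}~(i). The extra care you take in matching the sheaf-level $(\cdot)^\sim$ with the kernel-level $\widetilde{(\cdot)}$ is correct and only makes explicit what the paper leaves implicit.
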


\begin{proof}
  By Proposition~\ref{pr:compot} and \eqref{eq:xyt}, one has
  \[
  \Phi(\widetilde F) = \widetilde F \tcomp[\V] \field_{\{\langle
    x,y\rangle \leq t\}} \simeq F \comp[\V] (\field_{\{\langle
    x,y\rangle \leq t\}}\tcomp[\{pt\}] \widetilde\field)
\simeq F \comp[\V] \field_{\{\langle
    x,y\rangle \leq t\}}.
  \]
The statement then follows from Lemma~\ref{lem:FiF}.
\end{proof}

\medskip

Let now $j\colon X\to X'$ be an open subanalytic embedding of real analytic manifolds.
Let us still denote by $j$ the embedding of $\widetilde X$ in $\widetilde X' = X'\times\PR(\R)$.

\begin{conjecture}
\label{con:Tam}
Any $G\in\BDC_{j\text-\Rc,\,*\{t\geq 0\}}(\field_{\widetilde X})$
is isomorphic to a bounded complex $G^\bullet$ where each $G^i$ is a
direct sum, locally finite in $\widetilde X'$, of sheaves of the form
$\field_{\{x\in U,\,t\geq f(x)\}}$ for $U\subset X$ an open
$j$-subanalytic subset
and $f\colon U\to\R$ a continuous $j$-subanalytic function. 
\end{conjecture}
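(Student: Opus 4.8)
The plan is to exploit the idempotent functor $P\colon G\mapsto G\ttens\field_{\{t\geq 0\}}$ (idempotence being \eqref{eq:kkisk}), which by definition restricts to the identity on $\BDC_{j\text-\Rc,\,*\{t\geq 0\}}(\field_{\widetilde X})$, and to compute $P$ on the building blocks supplied by the structure theory of $\R$-constructible complexes. Since the statement is local on $X'$ and $\widetilde X'$, assume $\reim j G$ is $\R$-constructible. First I would choose a subanalytic cell decomposition of $\widetilde X'=X'\times\PR(\R)$, adapted to $\reim j G$ and \emph{cylindrical} over the projection $q\colon\widetilde X'\to X'$: it refines to a cell decomposition $X'=\bigsqcup_\alpha S_\alpha$ together with, over each $S_\alpha$, finitely many continuous subanalytic functions $\phi_{\alpha,1}<\cdots<\phi_{\alpha,k_\alpha}\colon S_\alpha\to\R$ (and the constants $\pm\infty$), whose graphs and the open bands between them are exactly the cells lying over $S_\alpha$ (see, e.g., \cite[\S8.2]{KS90} and subanalytic cell decomposition). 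The cells being contractible, the usual dévissage along skeleta then exhibits $G$ as a bounded complex, locally finite in $\widetilde X'$, whose terms are direct sums of sheaves $\field_{Z_\beta}$ for cells $Z_\beta\subset\widetilde X$ of this decomposition (no local systems intervene).

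The next step is to apply $P$ term by term: since $G\isoto PG$, it suffices to identify $P\field_{Z_\beta}$ for $Z_\beta$ a cell lying over a cell $S$ of $X'$. From the fact that the stalk of $PH$ at $(x,t)$ is $\rsect_c(\{x\}\times(-\infty,t];H)$, one reads off, according to the type of $Z_\beta$: if $Z_\beta$ is the graph $\{x\in S,\ t=\phi(x)\}$ then $P\field_{Z_\beta}\simeq\field_{\{x\in S,\ t\geq\phi(x)\}}$; if $Z_\beta$ is a band $\{x\in S,\ \phi(x)<t<\psi(x)\}$ or a downward ray $\{x\in S,\ t<\psi(x)\}$ with $\psi$ finite, then $P\field_{Z_\beta}\simeq\field_{\{x\in S,\ t\geq\psi(x)\}}[-1]$; and if $Z_\beta$ is an upward ray $\{x\in S,\ t>\phi(x)\}$ or a full cylinder $S\times\R$ then $P\field_{Z_\beta}\simeq 0$. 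In every case $P\field_{Z_\beta}$ is zero or a shift of $\field_{\{x\in S,\ t\geq f(x)\}}$ for a contractible $j$-subanalytic cell $S\subset X$ and a continuous $j$-subanalytic function $f\colon S\to\R$. Thus $G$ is represented by a bounded complex, locally finite in $\widetilde X'$, whose terms are direct sums of shifts of such sheaves.

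It then remains to replace each $\field_{\{x\in S,\ t\geq f(x)\}}$, where $S$ is only locally closed in $X$, by a bounded complex of sheaves of the required shape $\field_{\{x\in U,\ t\geq g(x)\}}$ with $U$ \emph{open} $j$-subanalytic. For this I would set $\Omega=X\setminus(\overline S\setminus S)$ (closure in $X'$), an open $j$-subanalytic set in which $S$ is closed, extend $f$ to a continuous $j$-subanalytic function $\widetilde f$ on $\Omega$ by a subanalytic Tietze extension, and invoke the distinguished triangle
\[
\field_{\{x\in\Omega\setminus S,\ t\geq\widetilde f(x)\}}\to\field_{\{x\in\Omega,\ t\geq\widetilde f(x)\}}\to\field_{\{x\in S,\ t\geq f(x)\}}\to\field_{\{x\in\Omega\setminus S,\ t\geq\widetilde f(x)\}}[1],
\]
in which $\Omega$ and $\Omega\setminus S$ are open $j$-subanalytic. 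Substituting these two-term complexes and passing to the total complex then yields the asserted $G^\bullet$.

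The main obstacle, and presumably the reason the statement is only a conjecture, is to produce simultaneously a subanalytic cell decomposition that is cylindrical over $X'$, genuinely cellular (so that the dévissage yields an honest bounded complex rather than a mere iterated extension of shifts of the $\field_{Z_\beta}$), subanalytic up to $X'$, and adapted to $\reim j G$; together with a subanalytic Tietze extension theorem keeping $\widetilde f$ $j$-subanalytic, and with careful control of local finiteness in $\widetilde X'$ — in particular near $X'\times\{\pm\infty\}$ and near $\partial X$ in $X'$ — propagated through the dévissage, the application of $P$, and the formation of the total complex. I expect the existence of the right cell decomposition, and the passage from ``iterated extension'' to ``bounded complex'', to be where the real work lies; the computation of $P\field_{Z_\beta}$ and the reduction to epigraph sheaves over open sets are routine once these foundations are in place. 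It may be prudent to settle first the conic case, where Theorem~\ref{th:FSnh} and Remark~\ref{rem:SS} provide an independent description of the categories involved.
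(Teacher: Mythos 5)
This statement is left as a \emph{conjecture} in the paper --- no proof is given --- so there is nothing of the author's to compare your proposal against; what follows is an assessment of your strategy on its own terms. The plan is sensible and the local computations are correct: the stalk of $G\ttens\field_{\{t\geq 0\}}$ at $(x,t)$ is indeed $\rsect_c(\{x\}\times(-\infty,t];G)$, and your identifications of $P\field_{Z}$ for graphs, bands, rays and full cylinders all check out. The case $X=\{pt\}$, where the conjecture reduces to decomposing a constructible object of $\BDC_{*\{t\geq0\}}(\field_\R)$ into interval modules $\field_{[a,b)}$, $\field_{[a,+\infty)}$ and then resolving $\field_{[a,b)}$ by $\field_{[b,+\infty)}\to\field_{[a,+\infty)}$, confirms that the answer should have the shape you describe.

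That said, the gaps you flag are precisely where the conjecture lives, and two of them should be stated more sharply than ``passage from iterated extension to bounded complex''. First, the d\'evissage: a cell decomposition yields a finite filtration of $G$ with graded pieces $\bigoplus_\beta\field_{Z_\beta}[d_\beta]$, not a complex; the standard device for converting such data into an honest complex (\cite[\S 8.1]{KS90}) uses the sheaves $\field_{U(\sigma)}$ on \emph{open stars}, because $\Hom(\field_{U(\sigma)},F)\simeq F_\sigma$ makes them behave like projectives, whereas the $\field_{Z_\beta}$ on locally closed cells enjoy no such property --- and open stars are not cylindrical, so they do not interact well with $P$. Second, ``applying $P$ term by term'' is a stalkwise computation identifying each $PG^i$ only in the derived category; since the $PG^i$ sit in different degrees, reassembling them into a single complex of sheaves requires representing $P$ and the differentials at the chain level (e.g.\ by a functorial two-term resolution of the convolution on cylindrical cells), which you do not supply. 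The same difficulty recurs in your last step: substituting the two-term open resolutions requires lifting the differentials of $G^\bullet$ through them, and $\Hom(\field_{\Omega'\times\text{epigraph}},\field_{\Omega\times\text{epigraph}})$ does not obviously surject onto $\Hom(\field_{S'\times\text{epigraph}},\field_{S\times\text{epigraph}})$. Add to this the existence of a cylindrical cell decomposition subanalytic up to $X'\times\PR(\R)$ adapted to $\reim jG$, a Tietze extension preserving $j$-subanalyticity, and the bookkeeping of local finiteness near $X'\times\{\pm\infty\}$ and $\partial X$. In short: a reasonable plan with correct local computations, honestly presented as incomplete; the homological bookkeeping it defers is exactly the content of the conjecture, which remains open.
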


	


\providecommand{\bysame}{\leavevmode\hbox to3em{\hrulefill}\thinspace}

\end{document}